\renewcommand\normalsize{%
    \@setfontsize\normalsize{11.7}{14pt plus .3pt minus .3pt}%
    \abovedisplayskip 10\p@ \@plus4\p@ \@minus4\p@
    \abovedisplayshortskip 6\p@ \@plus2\p@
    \belowdisplayshortskip 6\p@ \@plus2\p@
    \belowdisplayskip \abovedisplayskip}
\renewcommand\small{%
    \@setfontsize\small{9.5}{12\p@ plus .2\p@ minus .2\p@}%
    \abovedisplayskip 8.5\p@ \@plus4\p@ \@minus1\p@
    \belowdisplayskip \abovedisplayskip
    \abovedisplayshortskip \abovedisplayskip
    \belowdisplayshortskip \abovedisplayskip}
\renewcommand\footnotesize{%
    \@setfontsize\footnotesize{8.5}{9.25\p@ plus .1pt minus .1pt}
    \abovedisplayskip 6\p@ \@plus4\p@ \@minus1\p@
    \belowdisplayskip \abovedisplayskip
    \abovedisplayshortskip \abovedisplayskip
    \belowdisplayshortskip \abovedisplayskip}
\newtheorem{theorem}{Theorem}[section]
\newtheorem{proposition}[theorem]{Proposition}
\newtheorem{lemma}[theorem]{Lemma}
\newtheorem{corollary}[theorem] {Corollary}
\theoremstyle{remark}
\newtheorem{remark}[theorem]{Remark}
\theoremstyle{definition} 
\newtheorem{definition}{Definition} [section]
\newcommand{\Area}{\operatorname{Area}}
\newcommand{\dehn}{\operatorname{Dehn}}
\def\<{\langle}
\def\>{\rangle} 
\def\Z{\mathbb{Z}}
\def\N{\mathbb{N}}
\def\g{\gamma}
\def\im{{\rm{im}\, }}
\def\CL{{\rm{CL}}}
\def\Ann{{\rm{Ann}}}
\def\Z{\mathbb{Z}}
\def\E{\mathbb{E}}
\def\e{\epsilon}
\def\A{\mathbb{T}}
\def\wt{\widetilde}
\def\bpq{B_{pq}}
\def\bpqp{B_{pq}^+}
\def\tbpqp{\tilde{B}_{pq}^+}
\def\onto{\twoheadrightarrow}
\newcommand{\dist}{{\rm{dist}}}
\newcommand{\ssm}{\smallsetminus}
\title[Snowflake groups and conjugator length]{Snowflake groups and conjugator length functions with non-integer exponents} 
\author{M.\ R.\ Bridson and T.\ R.\ Riley}
\date{15 Dec 2025}
\begin{document}

\begin{abstract} We exhibit novel geometric phenomena in the study of conjugacy problems for discrete groups.
We prove that the snowflake groups $\bpq$, indexed by pairs of positive integers
$p>q$, have conjugator length functions $\CL(n)\simeq n$ and annular Dehn functions $\Ann(n) \simeq n^{2\alpha}$, where $\alpha = \log_2(2p/q)$.  Then, building on $\bpq$, we construct groups $\tbpqp$, for which  $\CL(n)\simeq n^{\alpha+1}$.  Thus the conjugator length spectrum and the spectrum of exponents of annular Dehn functions are both dense in the range $[2,\infty)$.   
\end{abstract}

\thanks{The first author thanks the Mathematics Department of Stanford University for its hospitality during
his sabbatical in the Spring of 2025.  The second author gratefully acknowledges the financial support of the National Science Foundation (NSF GCR-2428489). ORCID: 0000-0002-0080-9059 (MRB)  and 0009-0004-3699-0322 (TRR)}  

\def\IP{{\bf{IP}}}
\def\bCL{{\bf{CL}}}

\def\th{\theta}

\renewcommand{\theenumi}{\roman{enumi}}

\maketitle 
\section{Introduction} 
Conjugator length functions provide bounds of a geometric nature on the difficulty
of conjugacy problems in finitely generated  groups,  and annular Dehn functions provide an
alternative bound in the case of finitely presented groups.
The purpose of this paper is to 
show that these functions  exhibit a wide variety of behaviours. 
Concentrating on finitely presented groups and
functions of the form $n\mapsto n^\alpha$, we shall prove that in both cases the
{\em spectrum of exponents} $\alpha$ that arise is dense in the range  $[2,\infty)$.  This parallels a foundational result concerning   {\em Dehn functions},  a class of functions that have been intensively studied in connection with the
word problem for finitely presented groups.   By definition,  the Dehn function of a finitely presented group $G=\<A\mid R\>$ is
$$\dehn_G(n) = \max\lbrace \Area(u) \mid |u| \le n,\ u =1 \text{ in } G \rbrace,$$  where $u$ is a  word in the letters $A^{\pm 1}$
 that represents $1\in G$ and $\Area(u)$ is  the least integer $N$ such that $u$ is equal in the free group $F(A)$
to a product of $N$ conjugates of defining relations $r\in R^{\pm 1}$.  In more geometric language, 
$N$ is the combinatorial area (i.e.~number of 2-cells) in a minimal van~Kampen diagram for $u$.
 The {\em isoperimetric spectrum} $\IP$ is the countable set of numbers $e \ge 1$ such that there is a finitely presented group whose Dehn function is $\simeq n^e$,   where $\simeq$ is the standard equivalence relation of geometric 
 group theory (see Section~\ref{sec:conventions}).   Gromov showed that $\IP$ contains a gap between $1$ and $2$; see \cite{Gromov, Ol2, Bowditch, Papasoglu5}. Brady and Bridson \cite{BB} showed that this is the only gap; in other words, the
closure of $\IP$ is $\{1\}\cup [2,\infty)$.

The {\em conjugator length function} $\CL: \N \to \N$ of a finitely generated group $G$ is defined by
$$\CL_G(n) = \max \lbrace \CL(u,v) \mid |u| + |v| \le n,\ u\sim v \rbrace$$  where $u$ and $v$ are words in the generators and $\CL(u,v)$ is the length of a shortest element conjugating $u$ to $v$ in $G$; up to $\simeq$ equivalence,  this is independent of the choice of
generating set. 
The set of numbers $e$ such that  $n^e\simeq \CL_G(n)$ for some finitely presented group $G$  is a countable
subset of $[1,\infty)$, which we call the {\em{conjugator length spectrum}} $\bCL$.    In \cite{BrRi1, BrRi2} we proved that $\N\subset\bCL$, but no integer exponents were known before the present work.  The non-integer exponents in the following theorem are  transcendental.

\begin{theorem} \label{CL thm}
For every pair of positive integers $p>q$ there exists a finitely presented group $G$ with $\CL_G(n)\simeq n^{\alpha+1}$,
where $\alpha = \log_2(2p/q)$.
\end{theorem}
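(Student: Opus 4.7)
The group $G$ in the statement will be $\tbpqp$, as flagged in the abstract, obtained by a controlled extension of $\bpq$. The strategy is to leverage the mismatch in $\bpq$ between its linear conjugator length, $\CL_{\bpq}(n)\simeq n$, and its large annular Dehn function, $\Ann_{\bpq}(n)\simeq n^{2\alpha}$: a conjugator in $\bpq$ itself is short, but the annular van Kampen diagram it bounds is intrinsically large, and $\tbpqp$ should be engineered so that the conjugator in the extension is forced to realise a definite portion of that annular filling.

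Concretely, I expect $\tbpqp$ to be built as a graph of groups over $\bpq$ --- most plausibly a (stacked) HNN extension whose stable letter(s) $t$ conjugate a distinguished snowflake subgroup $H\leq \bpq$ to an isomorphic image, in a manner compatible with the snowflake scaling inherent in $\bpq$. The ``$+$'' decoration probably selects a single orientation, forestalling the cancellations that would short-circuit the lower bound.

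For the lower bound, I would exhibit an explicit family of conjugate pairs $u_n,v_n\in \tbpqp$, with $|u_n|+|v_n|$ of order $n$, for which any conjugator encodes an annular diagram in $\bpq$ of area of order $n^{2\alpha}$. By Britton's lemma any such conjugator must alternate $\bpq$-blocks with letters $t^{\pm 1}$; with the blocks supplying the cross-sectional perimeter (of order $n$) and on the order of $n^{\alpha}$ stable letters needed to sweep through the snowflake levels, the length of the conjugator must be at least $n\cdot n^{\alpha}=n^{\alpha+1}$. The matching upper bound should come from an explicit conjugation procedure: put both elements into HNN normal form; match $\bpq$-blocks one level at a time using the linear conjugator length in $\bpq$ at cost $O(n)$ per level; pay an aggregate of $O(n^{\alpha})$ levels to synchronise $t$-heights.

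The main obstacle will be the lower bound, and specifically ruling out unexpected shortcuts: one must show that no conjugator in $\tbpqp$ can evade the snowflake geometry of $\bpq$, which is precisely where the $\Ann_{\bpq}$ estimate rather than the linear $\CL_{\bpq}$ is essential. I would expect the $u_n,v_n$ to be hand-crafted so that the annular diagram they bound in $\bpq$ is unavoidable, in the sense that any shorter conjugator in $\tbpqp$ would, by cutting along the stable letters and reading off the resulting annulus in $\bpq$, produce an annular filling there that violates the $n^{2\alpha}$ lower bound on $\Ann_{\bpq}$.
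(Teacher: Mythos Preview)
Your guess at the construction and the mechanism are both off, and the gap is substantive rather than cosmetic.

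The group $\tbpqp$ is not an HNN extension of $\bpq$ whose stable letters conjugate a snowflake subgroup; it is a \emph{central} extension. One first forms $\bpqp$ by adding a stable letter $\th$ that commutes with $b$, and then $\tbpqp$ is obtained by introducing a central generator $z$ and replacing $[b,\th]=1$ by $[b,\th]=z$. The subgroup $\langle b,\th,z\rangle$ is a copy of the $3$-dimensional Heisenberg group. The key invariant is not the annular Dehn function of $\bpq$ but the distortion of the centre $\langle z\rangle$ in $\tbpqp$, which turns out to be $\simeq n^{\alpha+1}$: a word of length $\simeq n$ in $\bpq$ can equal $b^{\lfloor n^\alpha\rfloor}$ (this is the snowflake distortion of $\langle a,b\rangle$), and then $[b^{\lfloor n^\alpha\rfloor},\th^n]=z^{n\lfloor n^\alpha\rfloor}$ yields $z^M$ with $M\simeq n^{\alpha+1}$ from a word of length $\simeq n$. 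So your factorisation $n^{\alpha+1}=n\cdot n^\alpha$ is morally right, but the two factors are the $\th$-exponent and the $b$-exponent in a Heisenberg commutator, not a number of HNN levels times a block perimeter.

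Consequently the lower bound does not go through annular fillings in $\bpq$. Instead one observes that in $\tbpqp$ the only conjugator from $b$ to $bz^M$ is $\th^M$ (seen via the retraction onto the Heisenberg group that kills $a,s,t$), and $d(1,\th^M)=M$ because there is also a retraction onto $\langle\th\rangle$. Taking $M\simeq n^{\alpha+1}$ with $d(1,bz^M)\simeq n$ gives $\CL(n)\succeq n^{\alpha+1}$. Your proposed argument---cutting a putative short conjugator along stable letters to manufacture a small annular diagram in $\bpq$---does not apply to this construction, since the extension is central rather than HNN-like and there is no such cutting.

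The upper bound is also quite different from the level-by-level HNN matching you sketch, and is the bulk of the work. One first uses $\CL_{\bpqp}(n)\simeq n$ to conjugate $\bar u$ and $\bar v$ (images in $\bpqp$) to a common preferred representative $u_0$ by short words; lifting to $\tbpqp$ this reduces the problem to conjugacies $u_0\sim u_0 z^N$ with $|N|\preceq n^{\alpha+1}$. Such conjugacies are governed by a homomorphism $\zeta_{u_0}\colon C_{\bpqp}(u_0)\to\Z$, and one must classify centralisers in $\bpqp$ and bound, via a B\'ezout-type argument and the distortion estimates for $\langle a,b\rangle$, the length of a preimage of $N$. None of this is captured by your proposal.
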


\begin{corollary} $\bCL$ is dense in the range $[2,\infty)$. 
\end{corollary}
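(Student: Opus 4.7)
The plan is to deduce the corollary directly from Theorem \ref{CL thm}. Every pair of positive integers $p>q$ contributes to $\bCL$ the exponent
\[ e_{p,q}\ :=\ \log_2(2p/q) + 1\ =\ 2 + \log_2(p/q), \]
so it suffices to show that the set $E := \{e_{p,q} : p,q\in\N,\ p>q\}$ is dense in $[2,\infty)$. Since $\log_2\colon(0,\infty)\to\R$ is a homeomorphism sending $1$ to $0$, this is in turn equivalent to showing that the set of rationals $p/q$ with integers $p>q\geq 1$ is dense in $[1,\infty)$.

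That last claim is the standard rational-approximation fact: for any $x\geq 1$ and $\varepsilon>0$, I would pick an integer $q>1/\varepsilon$ and set $p := \max\{q+1,\lceil xq\rceil\}$; then $p>q$ and $|p/q-x|<\varepsilon$, while the endpoint $x=1$ (corresponding to $\beta=2$) is the limit of $(q+1)/q$ as $q\to\infty$. I do not anticipate any real obstacle here: all of the substantive content is carried by Theorem \ref{CL thm}, and the passage to the corollary is a one-line density remark about the rationals.
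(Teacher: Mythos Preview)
Your proposal is correct and matches the paper's approach: the corollary is presented there as an immediate consequence of Theorem~\ref{CL thm}, and you have simply written out the elementary density argument that the paper leaves implicit. (One cosmetic point: the symbol $\beta$ in your final sentence is undefined---you presumably mean the endpoint exponent $2$.)
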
 

\noindent We do not know whether there are gaps in $\bCL\cap [1,2]$.
\bigskip

The \emph{annular Dehn function} of a finitely presented group $G$, defined by Brick and Corson in \cite{BC} and   recently revisited by Gillis and Riley in \cite{GiRi}, is the function $\Ann_G : \N \to \N$ defined by  $$\Ann_G(n) = \max \lbrace \Ann(u,v) \mid |u| + |v| \le n,\ u\sim v \rbrace$$  where $u$ and $v$ are words in the generators and $\Ann(u,v)$ is the minimal $N$ such that there exists a word $w$ such that $uw=wv$ in $G$ and $\Area_G( w^{-1}uwv^{-1}) = N$ or, equivalently, there is an \emph{annular diagram} that exhibits the conjugacy $u \sim v$ and has combinatorial area $N$.    As observed in \cite{BC}, 
for any finitely presented group $G$,
\begin{equation} \label{ann by area and cl} 
\dehn_G(n) \ \leq \   \Ann_G(n) \ \leq \  \dehn_G(2\CL(n)+n).
\end{equation}
The first inequality here comes from specializing to conjugacies $u \sim v$ where $v$ is the empty word and the second 
inequality holds because if $|u| + |v| =n$ and $u \sim v$, then there is a $w$ such that $w^{-1}uw v^{-1}$ has length at most $2\CL(n)+n$ and represents the identity in $G$.   

\begin{theorem} \label{Ann thm}
For every pair of positive integers $p>q$ there exists a finitely presented group $G$ with $\Ann_G(n)\simeq n^{2\alpha}$, where $\alpha = \log_2(2p/q)$.    
\end{theorem}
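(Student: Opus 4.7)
The plan is to take $G=\bpq$ and exploit inequality~(\ref{ann by area and cl}), which sandwiches $\Ann_G(n)$ between $\dehn_G(n)$ and $\dehn_G(2\CL_G(n)+n)$. Granted the two properties of $\bpq$ advertised in the abstract, namely
\begin{itemize}
\item[(D)] $\dehn_{\bpq}(n) \simeq n^{2\alpha}$, and
\item[(C)] $\CL_{\bpq}(n) \simeq n$,
\end{itemize}
the theorem is essentially immediate: the lower bound on $\Ann_{\bpq}(n)$ is the first half of~(\ref{ann by area and cl}) combined with~(D); for the upper bound, (C) gives $2\CL_{\bpq}(n)+n = O(n)$, so the second half of~(\ref{ann by area and cl}) combined with~(D) yields $\Ann_{\bpq}(n) \le \dehn_{\bpq}(O(n)) \simeq n^{2\alpha}$.

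Assertion~(D) is essentially what the Brady--Bridson snowflake construction~\cite{BB} was engineered to produce. The group $\bpq$ is assembled as a graph of free-by-cyclic pieces whose defining relators include \emph{snowflake words}: these admit minimal van Kampen diagrams with a self-similar form of depth $k$, boundary length of order $2^k$, and area of order $(2p/q)^k$. Iterating yields the matching exponent $2\log_2(2p/q)=2\alpha$, both as a lower bound (via an explicit family of snowflake boundary words) and, via a more delicate diagrammatic analysis showing that every minimal diagram can be brought to snowflake form, as an upper bound on $\dehn_{\bpq}$. I would follow the strategy of~\cite{BB}, adapted to the precise presentation of $\bpq$ used here.

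The genuinely new input is~(C): the linear bound on $\CL_{\bpq}(n)$. Given a conjugacy $u\sim v$ with $|u|+|v|\le n$, the plan is to analyze a minimal annular diagram $D$ realizing it and to use the amalgam structure of $\bpq$ to reduce to a linear conjugator bound inside each vertex group. Concretely, the Bass--Serre tree for $\bpq$ provides a coarse skeleton of $D$, and the goal is to construct a path from one boundary circle of $D$ to the other whose length is linear in $n$; this path will serve as the desired conjugator. The main obstacle will be to rule out the possibility that $D$ contains a deep snowflake bubble obstructing passage between the two boundary circles: such a bubble could a priori force any connecting arc to wind around it at super-linear cost, and the exponent $2\alpha$ appearing in the Dehn function of $\bpq$ leaves substantial room for this to happen. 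Controlling the interaction between the snowflake amplification and the two boundary circles of an annular diagram is the crux of the argument, and presumably occupies the bulk of the paper.
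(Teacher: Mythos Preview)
Your reduction is exactly the paper's: take $G=\bpq$, invoke the Brady--Bridson computation $\dehn_{\bpq}(n)\simeq n^{2\alpha}$ from~\cite{BB} for~(D), prove $\CL_{\bpq}(n)\simeq n$ for~(C), and then read Theorem~\ref{Ann thm} off from~(\ref{ann by area and cl}). (A minor correction: $\bpq$ is a double HNN extension of $\Z^2$, not a graph of free-by-cyclic pieces.)

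Your speculation about the proof of~(C), however, does not locate the actual mechanism. The paper proves~(C) as a special case of a general result about multiple HNN extensions of $\Z^r$ along cyclic subgroups that are pairwise \emph{skew} (i.e.\ intersect trivially in pairs); for $\bpq$ these are $\langle a^q\rangle,\ \langle a^pb\rangle,\ \langle a^pb^{-1}\rangle$. In a minimal annular diagram one first bounds the number of essential $t_i$-annuli by a constant depending only on the presentation, and then, for the radial corridors, uses the skewness of adjacent corridor-sides to bound the length of each slab in the $\Z^r$-metric linearly in terms of its top and bottom. The polynomial distortion of $\Z^2$ in $\bpq$ (Proposition~\ref{p:distort}) then converts this back to a linear bound in the $\bpq$-metric. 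There is no ``deep snowflake bubble'' obstruction to overcome: snowflake amplification is an area phenomenon, whereas the conjugator bound only needs a single radial path, and skewness pins that path down.
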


This theorem shows that ${\textbf{Ann}}$ is dense in the range $(2,\infty)$ and contains all integers greater than $2$.
The observation that $ \Ann_{\Z^2}(n)\simeq n^2$ adds the exponent $2$.  If $\Ann_G(n)\lesssim n^e$ for $e<2$,
then the first inequality in (\ref{ann by area and cl}) tells us that $\dehn_G(n) \lesssim n^e$,  so $\dehn_G(n) \simeq n$ and $G$ is hyperbolic \cite{Gromov4}.
For hyperbolic groups,  $\CL_G(n)\simeq n$ and  therefore $ \Ann_G(n)\simeq n$, by the second inequality in (\ref{ann by area and cl}). 

\begin{corollary} 
$\mathbb{N}\subset \textbf{\textup{Ann}}$ and the closure of  
$\textbf{\textup{Ann}}$ is $\{1\}\cup [2,\infty)$.  
\end{corollary}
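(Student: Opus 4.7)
The plan is to assemble the corollary from Theorem~\ref{Ann thm} together with the handful of easy facts collected in the paragraph preceding the corollary. First, for density: since the rationals $p/q$ with $p>q\ge 1$ are dense in $(1,\infty)$, the exponents $2\alpha = 2\log_2(2p/q) = 2 + 2\log_2(p/q)$ range over a dense subset of $(2,\infty)$, and Theorem~\ref{Ann thm} realizes each as an element of $\textbf{Ann}$. The endpoint $1$ is realized by any nontrivial hyperbolic group, and $2$ is realized by $\mathbb{Z}^2$ via $\dehn_{\mathbb{Z}^2}(n)\simeq n^2$, $\CL_{\mathbb{Z}^2}(n)\simeq n$, and the sandwiching inequalities~(\ref{ann by area and cl}). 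Thus $\textbf{Ann}$ is dense in $\{1\}\cup[2,\infty)$ and contains both endpoints.

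To rule out exponents in $(1,2)$ I would follow the argument offered immediately after Theorem~\ref{Ann thm}: if $\Ann_G(n)\lesssim n^e$ with $e<2$ for a finitely presented $G$, then the first inequality of~(\ref{ann by area and cl}) forces $\dehn_G(n)\lesssim n^e$, so by Gromov's isoperimetric gap theorem $\dehn_G(n)\simeq n$ and $G$ is hyperbolic; linearity of $\CL_G$ then promotes $\Ann_G(n)$ to $\simeq n$ via the second inequality of~(\ref{ann by area and cl}). Combined with the density statement above, this identifies the closure of $\textbf{Ann}$ as exactly $\{1\}\cup[2,\infty)$.

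It remains to establish $\mathbb{N}\subset\textbf{Ann}$. The values $1$ and $2$ are already in hand, and Theorem~\ref{Ann thm} applied with $(p,q)=(2^{m-1},1)$ yields $2\alpha = 2m$, producing every even integer $\geq 4$. The main obstacle is the odd integers $k\geq 3$, for which $2\log_2(2p/q)=k$ admits no solution in positive integers. I would cover these by invoking the classical constructions of finitely presented groups $H_k$ with $\dehn_{H_k}(n)\simeq n^k$ and linear conjugator length, in the style of the iterated snowflake families of~\cite{BB}; for such groups~(\ref{ann by area and cl}) sandwiches $\Ann_{H_k}(n)$ between $n^k$ and $n^k$, placing $k\in\textbf{Ann}$. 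The delicate step I expect is verifying that the ambient constructions producing odd integer Dehn exponents retain linear $\CL$ after the amalgamations needed to tune the exponent.
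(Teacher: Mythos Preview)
Your treatment of the closure statement is essentially identical to the paper's: you use Theorem~\ref{Ann thm} for density in $(2,\infty)$, $\Z^2$ for the exponent $2$, any hyperbolic group for the exponent $1$, and the gap argument via~(\ref{ann by area and cl}) and Gromov's theorem to exclude $(1,2)$. Nothing to add there.

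For $\mathbb{N}\subset\textbf{Ann}$ you have spotted a genuine issue. The exponents produced by Theorem~\ref{Ann thm} are $2\alpha = 2\log_2(2p/q)$ with $p>q$ positive integers; for $2\alpha=k$ one needs $2p/q=2^{k/2}$, which has integer solutions only when $k$ is even. So Theorem~\ref{Ann thm} directly yields $4,6,8,\dots$, and together with the separate arguments for $1$ and $2$ this still misses the odd integers $\ge 3$. The paper's paragraph before the corollary simply asserts that Theorem~\ref{Ann thm} ``contains all integers greater than $2$'', without further justification; as you correctly observe, that does not follow from the stated theorem. Your instinct to patch this by invoking other families with integer Dehn exponent and linear $\CL$ is the right kind of move, but your proposal leaves it as a promissory note: you do not name specific groups, and (as you acknowledge) verifying linear $\CL$ for them is exactly the nontrivial step. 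So on this point your proof and the paper's are in the same position --- neither actually covers the odd integers --- though you at least identify the gap rather than glossing over it.
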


\bigskip

The constructions that we use to prove these results start with the  {\em snowflake groups} 
that Brady and Bridson used to prove that the closure of $\IP$ is $\{1\}\cup [2,\infty)$:
 \begin{equation}\label{bpq}
\bpq = \left\langle a, b, s, t \mid [a,b]=1,\, s^{-1}a^qs=a^pb,\ t^{-1}a^qt=a^pb^{-1} \right\rangle.
 \end{equation}
 If $p>q$ then $\dehn_{\bpq}(n)\simeq n^{2\alpha}$, where $\alpha=\log_2(2p/q)$.   This is a family of {\em tubular groups}: $\bpq$ is the fundamental group of the 2-complex obtained
 from the torus with fundamental group   $\<a,b\>\cong\Z^2$ by attaching two cylinders, with one end of 
 each cylinder wrapping $q$ times around the loop $a$ and the other ends wrapping around the loops $a^pb$
 and $a^pb^{-1}$, respectively.  
 
 The conjugator length function of $\bpq$ is not exotic,  in fact  it is linear (Theorem~\ref{t:cl}).
 This mundane conclusion may seem disappointing, 
but  in the light of (\ref{ann by area and cl}) it leads immediately to a proof of  Theorem \ref{Ann thm}.
\begin{theorem}\label{t:main1} 
If $p>q$ then $\Ann_{\bpq}(n)\simeq n^{2\alpha}$,  
where  $\alpha = \log_2(2p/q)$.
\end{theorem}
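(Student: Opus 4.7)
The plan is to deduce Theorem~\ref{t:main1} directly from the sandwich inequality (\ref{ann by area and cl}), using as input the Brady--Bridson computation $\dehn_{\bpq}(n)\simeq n^{2\alpha}$ (which is quoted after the definition of $\bpq$) together with the linear bound $\CL_{\bpq}(n)\simeq n$ promised in Theorem~\ref{t:cl}. No new geometric input is needed here; the theorem is a formal consequence of these two facts, so the real work lies elsewhere in the paper.

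For the lower bound, I would specialize the annular Dehn function to conjugacies of the form $u \sim 1$. For such $u$ an annular diagram witnessing $u \sim 1$ is the same as a van~Kampen diagram for $u$, so $\Ann_{\bpq}(n) \ge \dehn_{\bpq}(n)$ (this is precisely the first inequality in (\ref{ann by area and cl})). Combined with the Brady--Bridson result, this gives $\Ann_{\bpq}(n) \gtrsim n^{2\alpha}$. To make this rigorous one only has to exhibit words $u_n$ of length $O(n)$ achieving the Dehn-function lower bound and observe that they represent the identity in $\bpq$, so they are certainly conjugate to the empty word.

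For the upper bound, I would apply the second inequality in (\ref{ann by area and cl}): given a pair $u\sim v$ with $|u|+|v| \le n$, pick a shortest conjugator $w$, which has length at most $\CL_{\bpq}(n)$; then the word $w^{-1}uwv^{-1}$ has length at most $2\CL_{\bpq}(n) + n$ and represents the identity in $\bpq$, so it has area at most $\dehn_{\bpq}(2\CL_{\bpq}(n)+n)$. Since $\CL_{\bpq}(n)\simeq n$ by Theorem~\ref{t:cl}, this is bounded by $\dehn_{\bpq}(Cn)$ for some constant $C$, which is $\simeq n^{2\alpha}$. Hence $\Ann_{\bpq}(n) \lesssim n^{2\alpha}$.

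Combining the two bounds yields $\Ann_{\bpq}(n)\simeq n^{2\alpha}$. There is no serious obstacle in this argument itself; the difficulty has been shifted to proving Theorem~\ref{t:cl}, i.e.\ showing that the conjugator length function of the tubular group $\bpq$ really is linear despite the super-quadratic Dehn function. That is where the geometric content of the tubular structure, and the snowflake combinatorics governing conjugacies between cyclic words in the vertex torus, must be used.
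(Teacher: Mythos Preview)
Your proposal is correct and matches the paper's approach exactly: the paper states that Theorem~\ref{t:main1} follows immediately from the sandwich inequality (\ref{ann by area and cl}) once one knows $\dehn_{\bpq}(n)\simeq n^{2\alpha}$ and $\CL_{\bpq}(n)\simeq n$ (Theorem~\ref{t:cl}). As you rightly observe, the substantive work is in proving Theorem~\ref{t:cl}.
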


Our search for more interesting conjugator length functions continues with the family of 
tubular groups obtained by adding an extra cylinder to the 2-complex of $\bpq$,
 this time with both ends wrapping around the loop $b$.  The fundamental group of this
 complex is the 
 HNN extension of $\bpq$ obtained by adding a new stable letter $\theta$ that commutes with $b$:
 \begin{equation}\label{e:bpqp}
\bpqp = \left\langle a, b, s, t,\th \mid [a,b]=1, \,  [b,\th]=1,\, s^{-1}a^qs=a^pb,\ t^{-1}a^qt=a^pb^{-1}\right\rangle.
 \end{equation} 
 These groups also have linear conjugator length functions (Theorem~\ref{t:cl}), but something more exotic happens when
 we take the following  central
extension of $\bpqp$:
 \begin{equation}\label{e:tbpqp}
\tbpqp = \left\langle a, b, s, t , \th, z  \ \left| \ \parbox{55mm}{$z$ central, $[b,\th]=z$,  $[a,b]=1$, \\ $s^{-1}a^qs =a^pb$, $t^{-1}a^qt=a^pb^{-1}$} \right. \right\rangle.
 \end{equation}  
 
One can regard  $\tbpqp $ as an HNN extension of $\bpq \times \Z$ with $\Z=\< z\>$, where the stable letter
of this extension is $\theta$, conjugating $\<b,z\>$ to itself via the isomorphism $[z\mapsto z, \ b\mapsto bz]$.
Alternatively,  noting that $H=\< b, \th, z\>< \tbpqp$ is a copy of the 3-dimensional Heisenberg group,  one can regard $\tbpqp $
as the amalgamated free product of $\bpq \times \Z$ with $H$,  amalgamating the two copies of $\<b,z\>$ by
the isomorphism implicit in the notation. In either description, it is clear that the centre $\<z\>$ is infinite.

\begin{theorem}\label{t:main}  If $p>q$ then $\CL_{\tbpqp}(n)\simeq n^{\alpha +1}$,  where  $\alpha = \log_2(2p/q)$.
\end{theorem}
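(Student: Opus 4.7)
The plan is to establish matching lower and upper bounds on $\CL_{\tbpqp}$, exploiting the description of $\tbpqp$ as an HNN extension of $\bpq \times \langle z \rangle$ with stable letter $\th$ acting on $\langle b, z \rangle$ by $b \mapsto bz$ and $z \mapsto z$, or equivalently as the amalgamated product of $\bpq \times \Z$ with the $3$-dimensional Heisenberg group $H = \langle b, \th, z \rangle$ over $\langle b, z \rangle$. The snowflake geometry of $\bpq$ from \cite{BB} is the ingredient responsible for the non-integer exponent $\alpha+1$.

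For the lower bound $\CL_{\tbpqp}(n) \succeq n^{\alpha+1}$, I will construct explicit conjugate pairs $(u_n, v_n)$ of total length $O(n)$ whose shortest conjugator has length $\succeq n^{\alpha+1}$. Take $u_n = \th$ and $v_n = \th z^{N_n}$ with $N_n \simeq n^{\alpha+1}$. The Brady--Bridson snowflake construction supplies a word $\s_n$ of length $O(n)$ in $\{a,b,s,t\}^{\pm}$ representing $b^{M_n}$ in $\bpq$ with $M_n \simeq n^\alpha$. Since $\th^{-1} b \th = bz$, the iterated identity $[b^{M_n}, \th^n] = z^{M_n n} = z^{N_n}$ gives a word $\s_n \th^n \s_n^{-1} \th^{-n}$ of length $O(n)$ equal to $z^{N_n}$ in $\tbpqp$, so $|v_n| = O(n)$. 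As $b^n \th b^{-n} = \th z^n$, the element $b^{N_n}$ is a conjugator. For any conjugator $g$ with $g u_n g^{-1} = v_n$, the equation reduces to $[g, \th] = z^{N_n}$. Quotienting by $\langle z \rangle$ and applying Bass--Serre theory to the HNN $\bpqp = \bpq *_{\langle b \rangle}$ (where $\th$ is hyperbolic with edge-stabiliser $\langle b\rangle$) gives $C_{\bpqp}(\th) = \langle b, \th \rangle$, so $g$ reduces modulo $z$ to $b^m \th^k$ and hence $g = b^m \th^k z^j$ in $\tbpqp$. The direct computation $[b^m \th^k z^j, \th] = z^m$ then forces $m = N_n$.

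The main obstacle is showing $|b^{N_n} \th^k z^j|_{\tbpqp} \succeq N_n$. The difficulty is that the snowflake does compress certain $b$-powers: $|b^{M_n}|_{\tbpqp} = O(n)$ for the sparse \emph{resonant} exponents $M_n$, so one cannot simply claim $|b^N|_{\tbpqp} \succeq N$ for all $N$. I would choose $N_n$ from an arithmetic progression disjoint from the resonant set and then establish $|b^N|_{\tbpqp} \succeq N$ for such $N$ by a combinatorial area argument on snowflake van~Kampen diagrams: any word equal to $b^N$ must pay $\succeq N$ in generators, either as literal $b$-letters or via $a, s, t$-syllables used to effect the inflations $s^{-1} a^q s = a^p b$ and $t^{-1} a^q t = a^p b^{-1}$, and the compression at resonant exponents is a genuinely arithmetic phenomenon that does not bootstrap to arbitrary $N$. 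The stable letter $\th$ offers no further compression, because conjugation of $b$ by $\th$ alters only the $z$-component. The homomorphism $\tbpqp \to \Z$ sending $\th \mapsto 1$ (and all other generators to $0$) bounds $|k| \leq |g|$, and a filling-norm argument in the Heisenberg subgroup bounds the effective contribution of $z^j$, completing the reduction to the non-compression estimate.

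For the upper bound $\CL_{\tbpqp}(n) \preceq n^{\alpha+1}$, given $u \sim v$ with $|u| + |v| \leq n$, I would apply Collins' lemma to the HNN decomposition of $\tbpqp$. A conjugator decomposes into a base conjugator in $\bpq \times \langle z \rangle$ (controlled by the linear $\CL$ of $\bpq$ from Theorem~\ref{t:cl}) together with elements of $\langle b, z \rangle$ absorbing $\th$-syllable mismatches and a $z$-discrepancy between $u$ and $v$. The $z$-discrepancy is at most $\simeq n^{\alpha+1}$, since this is the largest $z$-exponent realisable by a length-$n$ word via the identity $[b^M, \th^K] = z^{MK}$ combined with the snowflake compression of $b^M$; absorbing it costs a conjugator in $\langle b \rangle$ of length $\lesssim n^{\alpha+1}$, matching the lower bound.
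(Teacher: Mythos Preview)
Your lower bound contains a genuine gap that cannot be repaired along the lines you suggest. With the pair $(u_n,v_n)=(\th,\th z^{N_n})$ you correctly identify that every conjugator has the form $g=b^{N_n}\th^k z^j$, but then you need $d_{\tbpqp}(1,g)\succeq N_n$, and this is \emph{false}: taking $k=j=0$ gives the conjugator $b^{N_n}$, and since $\bpq$ is a retract of $\tbpqp$ with the same generators $a,b,s,t$, we have $d_{\tbpqp}(1,b^{N_n})=d_{\bpq}(1,b^{N_n})$, which by Proposition~\ref{p:distort} is $\simeq N_n^{1/\alpha}\simeq n^{(\alpha+1)/\alpha}$, not $n^{\alpha+1}$. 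Your proposed rescue via ``non-resonant'' exponents rests on a misconception: the snowflake distortion of $\langle a,b\rangle$ in $\bpq$ is \emph{uniform} --- Proposition~\ref{p:distort} gives $d_{\bpq}(1,b^N)\simeq N^{1/\alpha}$ for \emph{every} $N$, not just for a sparse resonant set, so there is no arithmetic progression of exponents on which $|b^N|\succeq N$. The fix is to swap the roles of $b$ and $\th$: take $(u_n,v_n)=(b,bz^{M})$ with $M\simeq n^{\alpha+1}$. Then every conjugator lies in $\th^{M}\cdot C_{\tbpqp}(b)$, and the retraction $\tbpqp\to\langle\th\rangle$ (killing $a,b,s,t,z$) forces any conjugator to have length at least $M$, since $\langle\th\rangle$ is undistorted. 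This is exactly the paper's argument in Lemma~\ref{l: lower bound on CL}.

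Your upper-bound sketch points in the right direction (reduce modulo $z$, use linear $\CL$ in $\bpqp$, bound the $z$-discrepancy by $n^{\alpha+1}$ via the distortion of the centre, then absorb it), but the final step is more delicate than ``a conjugator in $\langle b\rangle$''. The element absorbing $z^N$ must lie in $C_{\bpqp}(u_0)$ and have $\zeta_{u_0}$-value $N$; which subgroup this conjugator comes from, and how long it is, depends on the structure of $C_{\bpqp}(u_0)$ and of $\zeta_{u_0}$, which varies according to whether $u_0$ is conjugate into $\langle b\rangle$, into $C_{\bpqp}(b)\setminus\langle b\rangle$, or neither (Proposition~\ref{p:zetas}). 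The paper handles this with the case analysis of Proposition~\ref{p:translate}, together with the metric estimates of Section~\ref{s:word metric in bpqp}; Collins' lemma alone does not supply this.
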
 

Theorem~\ref{CL thm}   follows.  We will also prove (in Sections~\ref{s:df of bpqp}, \ref{s:skew slabs}, and  \ref{s:dist of centre in tbpqp}) the following auxiliary results.      

\begin{theorem} \label{t:aux} 
For all positive integers $p>q$,  writing $\alpha = \log_2(2p/q)$, we have:
\begin{enumerate}
\item \label{aux:i} The Dehn function of $\bpqp$ is $\simeq n^{2\alpha}$.
\item \label{aux:ii} \label{aux:iii} $\CL_{\bpq}(n) \simeq  \CL_{\bpqp}(n)\simeq n$. 
\item \item  \label{aux:iv} The distortion of   $Z=\<z\><\tbpqp$ is  $\dist_{Z}(n) \simeq n^{\alpha+1}$.  
\end{enumerate}
\end{theorem}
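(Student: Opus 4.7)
The plan is to prove the three parts in turn, building on Bass--Serre theory and the snowflake distortion of $\langle a\rangle$ in $\bpq$.

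For part (i), the lower bound $\dehn_{\bpqp}(n)\gtrsim n^{2\alpha}$ is immediate: killing $\th$ defines a retraction $\bpqp\onto \bpq$, and $\dehn_{\bpq}(n)\simeq n^{2\alpha}$ by Brady--Bridson.  For the upper bound I would use the HNN structure of $\bpqp$ over $\langle b\rangle$.  The key auxiliary observation is that $\langle b\rangle$ is distorted in $\bpq$ with distortion $\simeq n^{\alpha}$: the relation $a^pb=s^{-1}a^q s$ gives $b^L=a^{-pL}s^{-1}a^{qL}s$, so an efficient word for $a^M$ (of length $\simeq M^{1/\alpha}$, via the nested snowflake identity $a^{2p}=s^{-1}a^q s t^{-1}a^q t$) yields an efficient word for $b^L$.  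Applying a standard $\th$-corridor dissection together with this distortion estimate and $\dehn_{\bpq}$ on the residual $\bpq$-region gives $\dehn_{\bpqp}(n)\lesssim n^{2\alpha}$.

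For part (ii), both $\bpq$ and $\bpqp$ are tubular groups, acting on Bass--Serre trees with $\Z^2$ vertex stabilisers and cyclic edge stabilisers.  A conjugacy $u\sim v$ reduces via Bass--Serre theory to cyclically reduced normal forms: if the reduced forms are hyperbolic, one compares their axes and reads off the conjugator as a tree element realising the translation, plus a vertex-group adjustment in $\Z^2$; if they are elliptic, one conjugates into a common $\Z^2$ vertex group, where conjugacy is trivial.  Each step contributes at most linearly to the conjugator length, giving $\CL\lesssim n$.  The extra HNN generator $\th$ in $\bpqp$ introduces one additional edge commuting with $b$ and does not alter the linear bound.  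The matching lower bound $\CL\gtrsim n$ is routine.

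For part (iii), the lower bound on $\dist_Z$ comes from the identity $[\th^M,W_L]=z^{LM}$ in $\tbpqp$, where $W_L$ is the efficient word from (i) of length $\simeq L^{1/\alpha}$ representing $b^L$; optimising with $L^{1/\alpha}\simeq M\simeq n$ gives $LM\simeq n^{\alpha+1}$ for word length $\simeq n$, so $\dist_Z(n)\gtrsim n^{\alpha+1}$.  For the upper bound, a word $w$ of length $n$ representing $z^N$ has trivial image in $\bpqp$ and so bounds a van Kampen diagram $\Delta$ in $\bpqp$; the signed count of $[b,\th]$-cells in $\Delta$ equals $N$, and these cells organise into $\th$-corridors, of which there are at most $n/2$ since $\partial w$ contains at most $n$ letters $\th^{\pm 1}$.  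A corridor of length $L_i$ has $b$-sides representing $b^{L_i}$ in $\bpqp$, with endpoints on $\partial w$, so $|b^{L_i}|_{\bpqp}\leq n$.  Because $\bpq$ is a retract of $\bpqp$, the distortion of $\langle b\rangle$ in $\bpqp$ equals that in $\bpq$, namely $\simeq n^\alpha$, hence $L_i\lesssim n^\alpha$; summing gives $N\lesssim (n/2)\cdot n^\alpha\simeq n^{\alpha+1}$.

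The main obstacle is establishing the distortion of $\langle b\rangle$ in $\bpq$ as $\simeq n^\alpha$: the upper bound on this distortion (used both in (i) for corridor control and in (iii) for bounding $L_i$) is the delicate direction, relying on the lower-bound filling estimates underlying the Brady--Bridson snowflake.
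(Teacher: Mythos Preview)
Your arguments for parts (i) and (iii) are correct and essentially match the paper's approach: the retraction $\bpqp\onto\bpq$ for the lower bound on the Dehn function, the $\th$-corridor dissection combined with the $\simeq n^\alpha$ distortion of $\langle b\rangle$ for the upper bound, and for the distortion of $Z$ exactly the commutator $[\th^M, b^L]$ and the corridor-counting bound you describe (this is the content of Propositions~\ref{p:new} and \ref{pp:dist-z}).

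Part (ii), however, has a genuine gap. The Bass--Serre reduction to cyclically reduced forms is the right framework, but the assertion that ``each step contributes at most linearly to the conjugator length'' is precisely where the work lies, and your sketch does not justify it. When two cyclically reduced hyperbolic elements are conjugate, the conjugator lies in an edge group (or a product of such), and its length in the ambient group is governed by its length in $\Z^2$, which you have not bounded. The paper's proof (Theorem~\ref{t:cl2}) isolates the missing ingredient: the family of edge groups $\langle a^q\rangle$, $\langle a^pb\rangle$, $\langle a^pb^{-1}\rangle$ (together with $\langle b\rangle$ for $\bpqp$) is \emph{skew}, i.e.\ any two of these cyclic subgroups intersect trivially in $\Z^2$. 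In an annular diagram for $u\sim v$, the slab between two radial corridors has its two sides lying in such cyclic subgroups and its top and bottom labelled by arcs of $u$ and $v$; when the sides are skew, an elementary linear-divergence estimate in $\Z^2$ (Lemma~\ref{l:skew}) bounds their $\Z^2$-length by a constant times the $\Z^2$-length of the top and bottom, which is $\lesssim n^\alpha$ by the distortion of $\A$. Feeding this back through the distortion gives a linear bound on the conjugator in the $\bpq$-metric. A separate argument handles the case where all adjacent corridor-sides are parallel (there one shows cyclic permutations of $u$ and $v$ are actually equal). Without the skewness observation and its interaction with the $n^\alpha$ distortion, the linear bound does not follow; this, rather than the distortion of $\langle b\rangle$, is the main obstacle for (ii).
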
 
 
\subsection*{Outline}  
In Section~\ref{sec:conventions}  we fix notation and some basic facts about van~Kampen (disc) diagrams and their conjugacy-problem analogues,  annular diagrams.  Following a brief discussion 
of the Dehn function of $\bpqp$ in Section~\ref{s:df of bpqp},   we embark on the main business of this paper,  the proofs of Theorems~\ref{t:main} and \ref{t:aux}.

The proof in  \cite{BB}  that the Dehn function of $\bpq$ is $n^{2\alpha}$  relied on the study of a
certain set of  ``snowflake"  words $w_k$ with fractal properties (Definition~\ref{Snowflake words} here).
These words were used to prove that the  distortion  of the
free abelian subgroup $\A=\<a,b\> <\bpq$ is $\simeq n^\alpha$.  Our proof
of Theorem \ref{t:main}  relies heavily on a more refined version of this
last result,  which compels us to revisit the geometry of snowflake words and geodesics in $\bpq$---this is done in Section~\ref{sec:Snowflake words}.

In Section~\ref{s:skew slabs}  we show that the conjugacy length functions of $\bpq$ and $\bpqp$ are linear. 
We do so by means of a geometric argument that applies to any multiple-HNN extension of a free-abelian  groups in which the amalgamated subgroups are cyclic and form a family that is 
suitably \emph{skew} (Theorem~\ref{t:cl}).

In Section~\ref{s:dist of centre in tbpqp} we establish  the lower bound $\CL_{\tbpqp} (n) \succeq n^{\alpha +1}$ (Lemma~\ref{l: lower bound on CL}).  We observe first that the shortest word conjugating $b$ to $bz^{M}$   in $\tbpqp$ is $\theta^M$, because that is so in  the Heisenberg group  $\langle b, \theta, z \mid z \text{ central}, \  [\theta, b]= z \rangle$ onto which  $\tbpqp$ retracts.  The second key point is that $d_{\tbpqp}(1, bz^{M}) \approx n$ when $M \approx  n^{\alpha +1}$,
which is a reflection of  the distortion of the center $\langle z \rangle$ in $\tbpqp$.  This
distortion  can be estimated by counting cells labelled $[b, \th]$ in van Kampen diagrams over $\bpqp$, which in turn we can bound thanks to our distortion estimates in $\bpq$.

The remainder of the paper is devoted to proving that $\CL_{\tbpqp} (n) \preceq n^{\alpha +1}$.  
We focus first, in Sections~\ref{s: special conjugacies} and \ref{s: centralizer and zeta-maps}, on the special case of conjugate elements $g \sim g z^N$ where $N \in \Z$.  Because  $\tbpqp$ is a central extension of $\bpqp$ by $\langle z \rangle \cong \Z$,  the set of integers $N$ such that  $g \sim g z^N$ in  $\tbpqp$  is the image of a homomorphism $\zeta_g$ from the centralizer $C_{\bpqp}(g)$ to  $\Z$, which we call a \emph{zeta-map}.  We determine (in Proposition~\ref{p:zetas})   what $C_{\bpqp}(g)$ and $\zeta_g$ are case-by-case in terms of $g$. Given generators for $C_{\bpqp}(g)$,  some $\Z$-linear combination of their images $m_1, \ldots, m_r$ under $\zeta_g$ equals $N$,  in other words
the linear diophantine equation $a_1 m_1 + \cdots + a_r m_t =N$ has a solution $a_1, \ldots, a_r \in \Z$.  We argue (in Lemma~\ref{l:bezout}) that by considering alternative integer solutions one can gain control on the absolute values of the 
integers $a_i$, and this gives us the desired control  on the length of conjugating elements.
 
In Section~\ref{s:Proof of Main Theorem}, where we bound the conjugator length for general $u \sim v$ in $\tbpqp$,  we will employ the following strategy: instead of  conjugating $u$ to $v$ directly,  we take their images  $\bar{u} \sim \bar{v}$ in $\bpqp$ and, calling on the fact that $\CL_{\bpqp}(n) \simeq n$ (Theorem~\ref{t:cl}), we conjugate each to a preferred representative $\-{u}_0$ of their conjugacy class using  conjugators whose lengths we can bound.  Lifting these
conjugacies to $\tbpqp$, we get a conjugacy $u \sim vz^N$ where the discrepancy   $z^N$  lies in the center.  But then $u_0 \sim u_0 z^N$ and we can correct for the discrepancy by deploying a further conjugating word supplied by the special case analysed in Sections~\ref{s: special conjugacies} and \ref{s: centralizer and zeta-maps}

In order to execute  this strategy,  we need to make our choice of $\-{u}_0$ carefully,  controlling not only its
length but also its centralizer and its zeta-map $\zeta_{\-{u}_0}$; this choice is made in Section~\ref{sec: preferred reps}.
Our efforts to bound  the lengths of the conjugators that arise at this stage of the proof rely on somewhat fine
properties of the metric on the subgroup $\langle a, b, \th \rangle$ in $\bpqp$. We prove the necessary estimates in Section~\ref{s:word metric in bpqp}, building on our understanding of  the geometry of $\langle a, b \rangle$ in $\bpq$,  as described in  Proposition~\ref{p:distort}.

\begin{remark}[Other families of exponents]
 We prove in  
 Theorem~\ref{CL thm} that for every pair of integers $p > q >0$ and for $\alpha = \log_2(2p/q)$, there is a finitely presented group with $\CL(n) \simeq n^{1 + \alpha}$.   We shall explain in a subsequent article that by taking a central extension of $\bpq$ and amalgamating its centre with the 3-dimensional Heisenberg group,  one can construct
  a finitely presented group with $\CL(n) \simeq n^{2 \alpha}$.   This provides a different set of exponents that is dense  in $(2, \infty)$.   More generally,  by taking a central extension of $\bpq$ and amalgamating its centre with that of the  class-2 nilpotent group $G_m$ constructed in \cite{BrRi1},  we get finitely presented groups with $\CL(n) \simeq n^{m-1 + 2 \alpha}$.   
\end{remark}

\section{Preliminaries} \label{sec:conventions}

Given functions $f,g:\N \to \N$, we write $f \lesssim g$ when there  is a constant $C>0$ such that $f(n)  \leq   Cg(Cn+n)+Cn+C$ for all $n$,  and  $f \simeq g$ if $f\lesssim g$ and $g\lesssim f$. 
Up to this standard equivalence relation,  $\CL_G(n)$ does not depend on the choice of finite generating set for the group $G$ and  $\Ann_G(n)$ and $\dehn_G(n)$  do not depend on the choice of finite presentation.

We use $=$ to indicate when two words represent the same element in a group and $\equiv$ to indicate when two words are  letter-by-letter the same.  We write $[x,y]$ for the commutator $x^{-1} y^{-1} xy$.

Suppose $G$ is a group given by a finite presentation $\langle A \mid R \rangle$.
A \emph{van Kampen diagram} for a word $w$ (in the generators $A$) is a finite, planar,  contractible 2-complex,
with a base vertex on the boundary and edges that are directed and labeled by elements of $A$ in a manner which
ensures that the boundary cycle of each 2-cell is labeled by a relation from $R^{\pm 1}$, and the boundary 
cycle of the diagram read from the basepoint is labeled   $w$. 

The importance of  van Kampen diagrams  is summarized in \emph{van Kampen's Lemma}, which states that a word $w$ represents the identity element in $G$ if and only if it admits a van Kampen diagram. The \emph{area} of a word $w$, denoted $\Area(w)$, is the minimum number of faces (2-cells) in any van Kampen diagram for $w$. 

An \emph{annular diagram} for a pair of words $u$ and $v$ is a finite, planar, combinatorial 2-complex that is homotopy equivalent to a circle or, in the degenerate case, a point. Its edges are directed and labeled by generators from $A$. Around each face, one reads a relation from $R^{\pm 1}$. The words $u$ and $v$ label the two boundary cycles of the diagram, read
with clockwise orientation from some vertex in the cycle.

The  analogue of van Kampen's Lemma for annular diagrams is as follows: 
if there is an annular diagram for  $u$ and $v$,  then $u$ and $v$ are conjugate in $G$  (we write $u \sim v$);
conversely if $u \sim v$ and $u\neq 1$ in $G$, then there is an annular diagram for  $u$ and $v$.

Our recent article  \cite{BrRiSa} with Andrew~Sale provides a survey of conjugator length functions and a careful treatment of these preliminaries.

\section{The Dehn function of $\bpqp$ } \label{s:df of bpqp}

We will give careful proofs of claims (2)--(4) of Theorem~\ref{t:aux},  but since (1) is not directly relevant to 
our study of conjugator length functions,  we only sketch its proof.  The  bound
$n^{2\alpha}\preceq  \delta_{\bpqp}(n)$ is an immediate consequence of the 
facts that killing $\th$ retracts $\bpqp$ onto $\bpq$ and $n^{2\alpha}\simeq  \delta_{\bpq}(n)$.
The complementary bound $ \delta_{\bpqp}(n)\preceq n^{2\alpha} $ can be established by
following the proof of Proposition 3.2 in \cite{BB}. The translation requires only superficial
adjustments and changes to constants,  once one has the  estimate on the distortion of $\<b\>$ in $\bpq$ (and hence
in $\bpqp$) coming from Proposition~\ref{p:distort}.

\section{Snowflake words and the distortion of $\A=\<a,b\>$ in $B=B_{pq}$} \label{sec:Snowflake words}

We fix the positive integers $p>q$  and throughout this section we write $B$ in place of $B_{pq}$,
to avoid a clutter of notation.  We retain the notation  $\alpha = \log_2 (2p/q)$,  and we note that $\alpha >1$.

\begin{definition}[Snowflake words] \label{Snowflake words}
For each positive integer $N$ we
want to define a {\em snowflake word}  $w_N$ in the letters $\{a,b^{\pm 1}, s^{\pm 1}, t^{\pm 1}\}$ so 
that $w_N=a^N$ in $B$ and $w_N$ is relatively short (as quantified in Lemma \ref{l:snowflake-words}).
These words are
constructed by a simple recursion,  the geometry of which is illustrated in Figure~\ref{fig:triangle}.
To make the early stages of the recursion clean,  we define
$w_N\equiv a^N$ if $N< 2p$.  Then,  for $N\ge 2p$, having defined $w_n$ for $n<N$, we  
write $N = 2pN_0 + \e_0$ with $0\le \e_0 < 2p$ and define
$$
w_N \equiv a^{\e_0} (s^{-1} w_{qN_0}s)  (t^{-1} w_{qN_0} t).
$$
By induction,  $w_{qN_0} = a^{qN_0}$ in $B$, so $s^{-1} w_{qN_0}s = (s^{-1}a^qs)^{N_0} = (a^pb)^{N_0}$ and
$t^{-1} w_{qN_0}t = (t^{-1}a^qt)^{N_0} = (a^pb^{-1})^{N_0}$. Therefore
$$
w_N = a^{\e_0} (a^pb)^{N_0}(a^pb^{-1})^{N_0} = a^{2pN_0+\e_0} = a^N, 
$$
as required.
\end{definition}

\begin{figure}[ht]
\begin{overpic}
{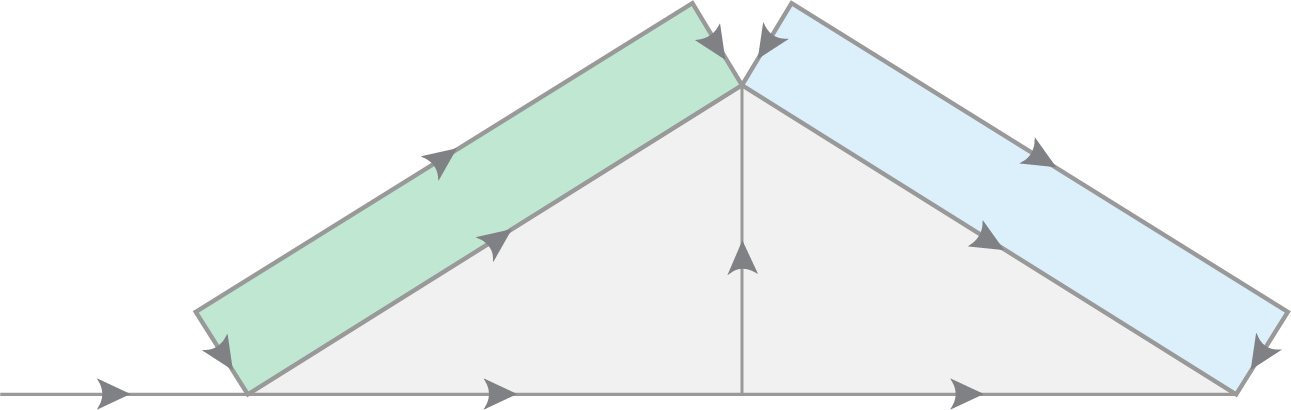}
 \put(45,8){\small{$s$}}     
 \put(173,94){\small{$s$}}    
  \put(309,8){\small{$t$}}      
 \put(181,94){\small{$t$}}     
 \put(25,-7){\small{$a^{\e_0}$}}     
 \put(90,63){\small{$a^{qN_0}$}}     
 \put(183,31){\small{$b^{N_0}$}}     
 \put(255,63){\small{$a^{qN_0}$}}     
 \put(110,27){\small{$(a^pb)^{N_0}$}}   
  \put(205,27){\small{$(a^pb^{-1})^{N_0}$}}   
  \put(120,-8){\small{$a^{pN_0}$}}     
 \put(230,-8){\small{$a^{pN_0}$}}   
\end{overpic}
 \caption{The recursion for constructing snowflake words}
  \label{fig:triangle}
\end{figure}

\begin{lemma} \label{l:snowflake-words} 
For all positive integers $d$,  if $N\le (2p/q)^d$ and $C\ge 2p+3$ then $|w_N| \le C(2^d-1)$.
\end{lemma}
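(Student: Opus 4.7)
The plan is to induct on $d$ using the recursive definition of $w_N$ directly. At each stage I would split into two cases according to whether $N<2p$ (where $w_N\equiv a^N$ is defined explicitly) or $N\ge 2p$ (where the recursion feeds the child word $w_{qN_0}$ into the induction hypothesis). For the base case $d=1$, the hypothesis $N\le 2p/q$ forces $N<2p$ whenever $q\ge 2$, and in that situation $|w_N|=N\le 2p-1<C=C(2^1-1)$. The only remaining possibility is $q=1$ and $N=2p$; then $N_0=1$ and $\e_0=0$, so $w_N\equiv s^{-1}ast^{-1}at$ has length $6$, and since $p>q=1$ forces $p\ge 2$ and hence $C\ge 2p+3\ge 7$, the desired inequality again holds.

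For the inductive step with $d\ge 2$, the case $N<2p$ is handled as above, since then $|w_N|=N<2p<C\le C(2^d-1)$. When $N\ge 2p$, I write $N=2pN_0+\e_0$ and observe that
\[
qN_0\;\le\;\frac{qN}{2p}\;\le\;\left(\frac{2p}{q}\right)^{d-1},
\]
so the induction hypothesis applies and yields $|w_{qN_0}|\le C(2^{d-1}-1)$. Combining this with the length formula $|w_N|=\e_0+4+2|w_{qN_0}|$ read off from the recursion, and with the elementary bounds $\e_0\le 2p-1$ and $C\ge 2p+3$, gives
\[
|w_N|\;\le\;(2p+3)+2C(2^{d-1}-1)\;\le\;C+C(2^d-2)\;=\;C(2^d-1),
\]
which closes the induction.

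The only real obstacle here is arithmetic bookkeeping: the constant $C$ must be chosen large enough to absorb the additive overhead $\e_0+4\le 2p+3$ produced at each level of the recursion, so that the doubling in the recursion (contributing the factor $2^d$) governs the final bound. That is exactly what the hypothesis $C\ge 2p+3$ guarantees, and once this constant is fixed, the estimate propagates cleanly through the induction.
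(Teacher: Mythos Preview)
Your proof is correct and follows essentially the same route as the paper: both arguments unfold the recursion $|w_N|=\e_0+4+2|w_{qN_0}|$, observe that $qN_0\le (2p/q)^{d-1}$, and absorb the additive $2p+3$ into the constant $C$. The only cosmetic difference is that you induct on $d$ while the paper inducts on $N$; your version has the minor virtue of explicitly treating the boundary case $q=1$, $N=2p$, $d=1$.
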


\begin{proof} With $d$ and $C$ fixed,   we proceed by induction on $N$.  If  $N\le 2p$, then $w_N\equiv a^N$, and so $|w_N|= N \le 2p$, which is less than $C(2^d-1)$ for all $d\ge 1$.  For the inductive step, 
we recall that if
\begin{equation}\label{N_0}
N = 2pN_0 + \e_0
\end{equation}
 with $0\le \e_0 < 2p$, then
\[
w_N \equiv a^{\e_0} (s^{-1} w_{qN_0}s)  (t^{-1} w_{qN_0} t)
\]
and therefore
\begin{equation}\label{trian}
 |w_N| \le \e_0 + 4 + 2 |w_{qN_0}| < 2p + 4 + 2 |w_{qN_0}|.
\end{equation} 
From (\ref{N_0}) we have   $qN_0 \le (q/2p) N$, so $N\le (2p/q)^d$ implies $qN_0 \le (2p/q)^{d-1}$.
Therefore,  by induction, we have from (\ref{trian}) 
\[ |w_N| < 2p + 4 + 2C  (2^{d-1}-1) = C (2^d -1) + (2p+4 - C).
\]
As $C\ge 2p+3$, this completes the induction.
\end{proof}

\begin{lemma}\label{l:distort} Let $r=2p/q$ and let $\alpha = \log_2 r$.
There are positive constants $k_i, K_i \ (i=0,1,2)$ so that, for all integers $N>0$,
\begin{enumerate}
\item  $k_0 N^{1/\alpha} \le d_B(1, a^N) \le K_0 N^{1/\alpha}$
\item $k_1 N^{1/\alpha} \le d_B(1, (a^pb)^N) \le K_1 N^{1/\alpha}$
\item  $k_2 N^{1/\alpha} \le d_B(1, (a^pb^{-1})^N) \le K_2 N^{1/\alpha}$.
\end{enumerate} 
\end{lemma}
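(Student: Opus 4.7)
The plan is to establish the upper bounds directly from Lemma~\ref{l:snowflake-words} and then to reduce among the three cases, leaving only the lower bound on $d_B(1,a^N)$ as the main task.

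\emph{Upper bounds.}  Set $r = 2p/q$, so that $r = 2^\alpha$.  For part (1), given a positive integer $N$, I would take $d = \lceil \log_r N \rceil$.  Then $N \le r^d$ and $2^d \le 2 N^{1/\alpha}$, so Lemma~\ref{l:snowflake-words} with $C = 2p + 3$ gives
\[
d_B(1, a^N) \;\le\; |w_N| \;\le\; C(2^d - 1) \;\le\; 2C N^{1/\alpha},
\]
which is the upper bound in (1).  For parts (2) and (3), I would use the defining relations to rewrite $(a^p b)^N = s^{-1} a^{qN} s$ and $(a^p b^{-1})^N = t^{-1} a^{qN} t$ in $B$, so that $d_B(1, (a^p b^{\pm 1})^N) \le 2 + d_B(1, a^{qN})$, and then invoke (1).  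The same two identities, read in reverse, show that any lower bound $d_B(1, a^M) \ge k_0 M^{1/\alpha}$ immediately implies the lower bounds in (2) and (3) after rescaling $N \mapsto qN$ and adjusting the constants.

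\emph{Lower bound in (1): strategy.}  The substance of the lemma is thus the lower bound $d_B(1, a^N) \ge k_0 N^{1/\alpha}$.  The plan is to take a shortest word $w$ representing $a^N$ in $B$ and a minimal van~Kampen diagram $\Delta$ for $w a^{-N}$, and to count the stable-letter occurrences in $w$ using the corridor structure of $\Delta$.  Because $B = \bpq$ is a multiple HNN extension of $\A = \<a, b\>$ with stable letters $s, t$, the cells labelled by $s^{-1} a^q s = a^p b$ or $t^{-1} a^q t = a^p b^{-1}$ assemble into corridors whose endpoints lie on $\partial \Delta$; the subword $a^{-N}$ contains no stable letters, so every corridor terminates on the $w$-side.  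Removing all the corridors cuts $\Delta$ into sub-diagrams whose only cells are copies of the torus relator $[a, b] = 1$, and whose boundaries therefore read $\A$-trivial words.  An $s$-corridor of width $k$ has sides labelled $a^{qk}$ and $(a^p b)^k$, so traversing it amplifies the $a$-content by the factor $p/q$ while introducing $b$-content $\pm k$; the same holds for $t$-corridors with $b \mapsto b^{-1}$.  Since the boundary label of $\Delta$ has zero $b$-content, the $b$-contributions must cancel in matched $s/t$ pairs, forcing the corridors to be organised in the pattern of the snowflake recursion of Definition~\ref{Snowflake words}.  Reading that recursion backwards, one peels off outermost pairs and obtains a nested family of corridors of depth at least $\log_r N$; the total corridor-width at least doubles at each level, so $\partial \Delta$ must carry at least $\approx 2^{\log_r N} = N^{1/\alpha}$ stable-letter edges, giving $|w| \ge k_0 N^{1/\alpha}$.

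\emph{Expected difficulty.}  Making this ``pair and peel'' argument rigorous is the main obstacle.  Arbitrary corridors in $\Delta$ need not nest cleanly---they may cross, and the non-corridor sub-diagrams between them can contribute extra $a$- and $b$-content that has to be tracked before the matching with the recursion defining $w_N$ can be carried out.  The required bookkeeping is essentially the combinatorial core of the calculation in \cite{BB} that gave $\dehn_{\bpq}(n) \simeq n^{2 \alpha}$, now refined so as to count only the stable letters on $\partial \Delta$ rather than all the $2$-cells of $\Delta$.  Once this is done, one extracts an inductive inequality of the form $d_B(1, a^N) \ge 2\, d_B(1, a^{\lfloor N/r \rfloor}) - O(1)$, which iterates to $d_B(1, a^N) \ge k_0 N^{1/\alpha}$ and completes the proof.
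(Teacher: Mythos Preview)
Your upper-bound argument and the reduction of (ii) and (iii) to (i) are essentially identical to the paper's.

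For the lower bound in (i), the paper does not give an argument at all: it simply cites \cite[Proposition~2.1]{BB} and moves on. So you have attempted more than the paper does here. Your sketch is in the right spirit but has some wrinkles worth flagging. First, $s$- and $t$-corridors in a planar van~Kampen diagram cannot cross one another (each is a strip of $2$-cells sharing a given stable letter, and the diagram is planar), so the worry you raise about crossings is not the real obstacle. The actual difficulty is that the corridors need not nest in the tidy binary tree suggested by the snowflake recursion, and the $\A$-subdiagrams between corridors can redistribute $a$- and $b$-content in ways that make the clean inductive inequality $d_B(1,a^N) \ge 2\, d_B(1, a^{\lfloor N/r\rfloor}) - O(1)$ hard to extract directly. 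The argument in \cite{BB} handles this by a more geometric route (working with the non-positively curved $2$-complex on which $\bpq$ acts and estimating distances there), rather than by the purely combinatorial peeling you describe. Since you already recognise that the content is that of \cite{BB}, simply citing it---as the paper does---is the cleanest resolution.
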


\begin{proof}  As $a^pb = s^{-1}a^qs$ and  $a^pb^{-1} = t^{-1}a^qt$, items (ii) and (iii) follow easily from (i), so we
concentrate on proving (i).  First we establish the upper bound. 
 
Given $N$,  let $d>0$ be the integer such that $r^{d-1}\le N < r^{d}$. Then
\begin{equation}\label{u1}
 d-1\le \log_r N = \frac{\log_2 N}{\log_2 r} = \frac{1}{\alpha} \log_2N = \log_2 N^{1/\alpha}.
\end{equation} 
From Lemma \ref{l:snowflake-words}, for the snowflake word $w_N$, taking $C= 2p+3$ we have
\begin{equation}\label{u2}
d_B(1, a^N) \le |w_N| \le C (2^d-1) < C2^d = (4p+6) 2^{d-1}.
\end{equation}
Combining (\ref{u1}) and (\ref{u2}), we conclude that 
$$d_B(1, a^N) \le (4p+6) 2^{\log_2 N^{1/\alpha}} = (4p+6) N^{1/\alpha},$$
so it suffices to let $K_1= 4p+6$. 

The existence of the constants $k_0, k_1, k_2$ is the content of \cite[Proposition~2.1]{BB}. 
\end{proof}

The following proposition sharpens the main subgroup-distortion result in \cite{BB}. The argument is peculiar to $B$ most acutely in that it relies on the abelian nature of the vertex group $\mathbb{T}$, which allows us to  rearrange   $v_1 \cdots v_m$ using a permutation $\sigma$.  

\def\k{\kappa}

\begin{proposition}\label{p:distort}
There exists a constant  $\k >1$ such that, for all $g\in \A=\<a,b\>$, in $B=B_{pq}$ we have
\[ \frac{1}{\k} \,  d_\A(1, g)^{1/\alpha} \le d_B(1, g) \le \k\,  d_\A(1, g)^{1/\alpha}.
\]
\end{proposition}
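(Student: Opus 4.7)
The upper bound $d_B(1, g) \leq \kappa\, d_\A(1, g)^{1/\alpha}$ is an immediate consequence of Lemma~\ref{l:distort} combined with the identity
\[
b^{2N} \;=\; (a^p b)^N (a^p b^{-1})^{-N},
\]
valid in $\A$ (and hence in $B$) because $a$ and $b$ commute. Writing an arbitrary $g = a^M b^N \in \A$ as $a^M \cdot (a^p b)^{\lfloor N/2\rfloor} \cdot (a^p b^{-1})^{-\lfloor N/2\rfloor} \cdot b^{N \bmod 2}$ and applying parts (i)--(iii) of Lemma~\ref{l:distort} to the first three factors produces a $B$-word for $g$ of length at most $K_0 |M|^{1/\alpha} + (K_1+K_2)|N|^{1/\alpha} + 1$, which is bounded by a constant multiple of $(|M|+|N|)^{1/\alpha} = d_\A(1,g)^{1/\alpha}$.

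For the lower bound, let $w$ be a $B$-geodesic of length $n = d_B(1, g)$ representing $g \in \A$. Tracking the path of $w$ in the Bass--Serre tree of $B$---a closed loop based at the $\A$-vertex---we obtain a canonical top-level decomposition
\[
w \;\equiv\; u_0\, P_1\, u_1\, P_2 \cdots P_m\, u_m,
\]
where each $u_j$ is a (possibly empty) word in $\{a^{\pm 1}, b^{\pm 1}\}$ and each $P_i = x_i V_i x_i^{-1}$ is a maximal excursion through a matched pair of stable letters $x_i^{\pm 1} \in \{s^{\pm 1}, t^{\pm 1}\}$. By Britton's lemma, the interior $V_i$ represents an element of the appropriate edge subgroup of $\A$: a power of $a^q$, $a^p b$, or $a^p b^{-1}$. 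Writing this element as $a^{q k_i}$ or $(a^p b^{\pm 1})^{k_i}$ in $\A$ for some $k_i \geq 0$, the reduction $x_i V_i x_i^{-1}$ is an element of $\A$ whose $\A$-length is at most $(p+1) k_i$.

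Using the abelianity of $\A$---this is the permutation $\sigma$ of the proposition---we may freely reassemble the syllables $u_j$ and the reductions of the $P_i$ into an $\A$-word for $g$, giving
\[
d_\A(1, g) \;\leq\; \sum_j |u_j| + (p+1) \sum_i k_i \;\leq\; n + (p+1) \sum_i k_i.
\]
To estimate $\sum k_i$ we apply Lemma~\ref{l:distort} once more: each $V_i$ is a $B$-word of length $|V_i|$ representing a power of $a^q$, $a^p b$, or $a^p b^{-1}$, so $k_i \leq (|V_i|/c)^\alpha$ for a suitable constant $c > 0$ depending on $k_0, k_1, k_2, q$. Since the interiors $V_1, \ldots, V_m$ are pairwise disjoint sub-words of $w$ we have $\sum_i |V_i| \leq n$; combined with the convexity inequality $\sum x_i^\alpha \leq (\sum x_i)^\alpha$ (valid for $\alpha > 1$ and non-negative $x_i$), this gives $\sum_i k_i \leq n^\alpha/c^\alpha$, and hence $d_\A(1, g) \leq \kappa^\alpha n^\alpha$ for a suitable $\kappa$.

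The main subtlety is that the lower bound of Lemma~\ref{l:distort} applies uniformly to each top-level interior $V_i$ regardless of its internal structure: a non-abelian $V_i$ may itself contain further sub-pinches, but it is still a $B$-word of length $|V_i|$ representing a fixed power of $a^q$, $a^p b$, or $a^p b^{-1}$, and Lemma~\ref{l:distort} bounds its exponent $k_i$ directly---without any recursion on nesting depth. This, together with the free rearrangement permitted by abelianity of $\A$, is what allows the convexity inequality to close the argument at the top level.
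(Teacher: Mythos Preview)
Your proof is correct and follows essentially the same strategy as the paper's: decompose a $B$-geodesic for $g$ via its outermost $s,t$-pinches (the paper does this with corridors in a van~Kampen diagram rather than with the Bass--Serre tree and Britton's lemma), invoke Lemma~\ref{l:distort} to bound the edge-group exponents, and use the abelianness of $\A$ to reassemble. The only real difference is bookkeeping: the paper permutes the pieces to group them by type---powers of $a^q$, of $a^pb$, of $a^pb^{-1}$---and applies Lemma~\ref{l:distort} once per type, whereas you apply it to each $V_i$ separately and close with the convexity inequality $\sum x_i^\alpha \le (\sum x_i)^\alpha$.
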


\begin{proof} It is enough to prove that there exist positive constants $k,K$
such that 
\[k \,  d_\A(1, g)^{1/\alpha} \le d_B(1, g) \le K\,  d_\A(1, g)^{1/\alpha},
\]
and this is what we shall do. One can then define $\k = \max \{K,  1/k\}$. 

The existence of the constants $k$ and $K$, but not their values, is independent of the choice of 
finite generating sets for $B$ and $\A$. For the rightmost inequality, it is convenient to work with the
generating set $\{a, a^pb\}$ for $\A$.   Then,  given $g=a^N(a^pb)^M$,   we have $d_\A(1, g) = |M| + |N|$.
On the other hand,  using the triangle inequality and Lemma \ref{l:distort} we have
$$
d_B(1, g) \le d_B(1,a^N) + d_B(1,(a^pb)^M) \le K_0 |N|^{1/\alpha} + K_1 |M|^{1/\alpha},
$$
so by Minkowski's inequality, 
$$
d_B(1, g)^\alpha \le K^\alpha (|N| + |M|) = K^\alpha \, d_\A(1, g),
$$
where $K= \max\{K_0, K_1\}$.    Raising both sides to the power $1/\alpha$, 
we obtain the desired upper bound on  $d_B(1, g)$. 

In order to establish the complementary lower bound on $d_B(1, g)$, we consider the nature of geodesic representatives
for $g$ in $B$, working with the standard 
generating set $\{a, b, s, t\}$.  The following argument can be made purely algebraic,  but it
is more instructive to consider  van Kampen diagrams.

\begin{figure}[ht]
\begin{overpic}
{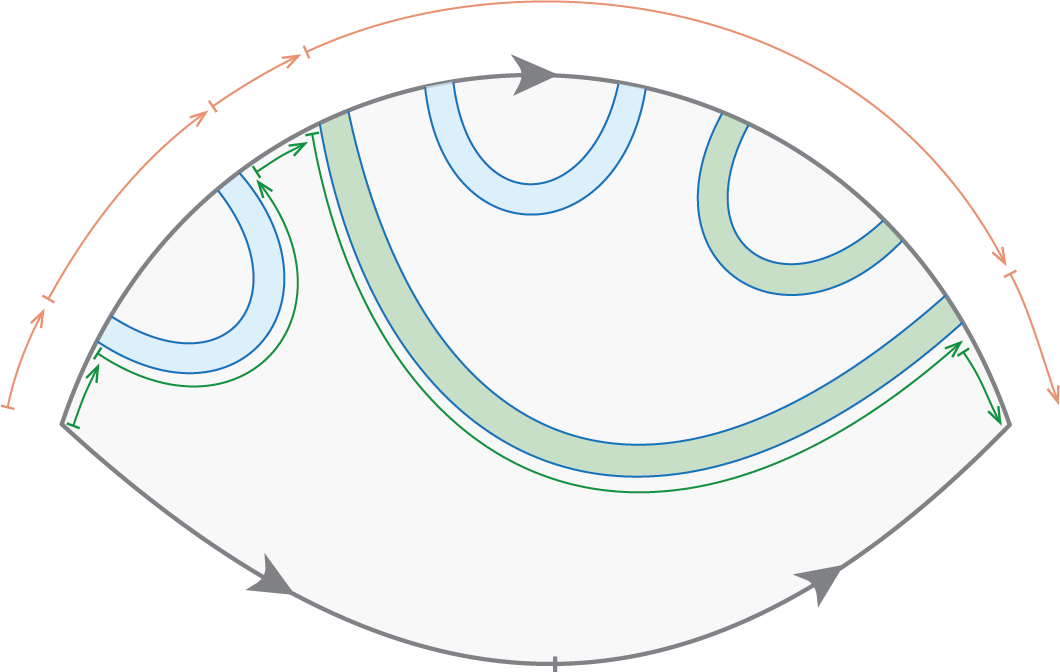}
 \put(127,150){\small{$\omega$}}     
 \put(56,8){\small{$a^N$}}    
 \put(200,9){\small{$b^M$}}    
 \put(230,140){\small{$\Delta$}}    
 \put(100,30){\small{$\Delta_0$}}    
  \put(42,90){\small{$\Delta_1$}}      
 \put(139,85){\small{$\Delta_2$}}     
 \put(125,130){\small{$\Delta_3$}}     
 \put(190,110){\small{$\Delta_4$}} 
 \put(22,60){\small{$u_1$}}    
  \put(60,66){\small{$u_2$}}      
 \put(67,116){\small{$u_3$}}     
 \put(108,48){\small{$u_4$}}     
 \put(227,60){\small{$u_5$}}      
 \put(-5,75){\small{$v_1$}}    
  \put(19,119){\small{$v_2$}}      
 \put(53,147){\small{$v_3$}}     
 \put(185,154){\small{$v_4$}}     
 \put(252,82){\small{$v_5$}}  
  \put(78,137){\small{$s$}}      
  \put(178,136){\small{$s$}}      
  \put(217,108){\small{$s$}}      
  \put(232,88){\small{$s$}}      
  \put(20,83){\small{$t$}}      
  \put(51,121){\small{$t$}}      
  \put(104,144){\small{$t$}}      
  \put(153,144){\small{$t$}}      
\end{overpic}
 \caption{Illustrating our proof of Proposition~\ref{p:distort}}
  \label{fig:diagram}
\end{figure}

Let $g=a^Nb^M$, let $\omega$ be
a  geodesic representative of $g$, and consider a minimal-area van Kampen diagram with boundary label $\omega(a^Nb^M)^{-1}$ as illustrated in Figure~\ref{fig:diagram}.
All $s$-  and $t$-corridors in $\Delta$ have their endpoints on the arc of $\partial\Delta$ labelled $\omega$.
By deleting the interior of all corridors, we cut $\Delta$ into a disjoint union of contractible subdiagrams
 $\Delta_0, \ldots, \Delta_l$, one of which $\Delta_0$
contains the original boundary arc labelled $a^Nb^M$.   The label on the boundary cycle of $\Delta_0$ has the form $u_1u_2\dots u_m  (a^Nb^M)^{-1}$, where each  $u_i$ is a word in the free group on $\{a,b\}$
that is either the label on the side of an $s$-corridor  or $t$-corridor, or else is the label on a portion of the boundary cycle
$\partial\Delta$.  The words $u_i$ labelling the sides of corridors are of three types: a power of $a^q$ or of $a^pb$
or of $a^pb^{-1}$.  We regard the remaining $u_i$, those coming from $\partial\Delta$,  as being of a fourth type.
Let $v_i$ be the (geodesic) subword of $\omega$ labelling the arc connecting the endpoints 
of $u_i$ and note that since $u_i=v_i$ lies in the abelian subgroup $\A=\<a,b\>$,  for every permutation $\sigma \in 
{\rm{sym}}(m)$
$$
v_{\sigma (1)} \cdots v_{\sigma (m)}
$$  is also a geodesic representative of
$g$ in $B$.  

We apply  a permutation $\sigma$ that brings all of the  $u_i$ of each of the four types together, so as to obtain a new word in the letters $a,b$ representing $g$
\[ g=u_0a^{qN_1}(a^pb)^{N_2}(a^pb^{-1})^{N_3}\]
where 
\[ d_B(1,g) = |\omega| = |u_0| + d_B(1, a^{qN_1}) + d_B(1, (a^pb)^{N_2}) + d_B(1, (a^pb^{-1})^{N_3}).
\] 
Using Lemma \ref{l:distort}  and taking $k_3:=\min \{1, k_0, k_1, k_2\}$, we get 
\begin{equation}\label{ugh1}
\frac{1}{k_3}\ d_B(1,g) \ge |u_0| +   (qN_1)^{1/\alpha} + N_2^{1/\alpha} + N_3^{1/\alpha}.
\end{equation}
On the other hand,  the triangle inequality in $\A$ tells us that 
\[
d_\A(1,g) \le  |u_0| + q |N_1| + (p+1)|N_2| + (p+1)|N_3|  \leq    (p+1) ( |u_0|  + q|N_1| + |N_2| + |N_3|).
\]  
Therefore,  
\begin{equation}\label{ugh3}
\frac{1}{4(p+1)}\ d_\A(1,g) \leq  \max \lbrace  |u_0|,\  q |N_1|,\   |N_2|,\  |N_3|\rbrace =:\mu.
\end{equation}
From  (\ref{ugh1}) we have $d_B(1,g) \ge k_3 \mu^{1/\alpha}$, so from    (\ref{ugh3}) we conclude that
\[ d_B(1,g) \ge \frac{k_3}{(4(p+1))^{1/\alpha}}\ d_\A(1,g)^{1/\alpha}.
\]
Setting $k = k_3/(4(p+1))^{1/\alpha}$ completes the proof.
\end{proof}
 
\section{Skew slabs, annular diagrams, and conjugacy in $\bpq$}  \label{s:skew slabs}

Suppose $A \cong\Z^r$ is a  free abelian group with word metric $d$.

\begin{definition}
A pair $L, L'$ of   cyclic subgroups in $A$ is  {\em skew}   if  each is infinite and
$L \cap L'=1$. A family $L_1, \dots,L_m$ of cyclic subgroups of $A$ is {\em skew} 
if every  pair $L_i, L_j$ such that $L_i \neq L_j$ is skew.
\end{definition}

\begin{lemma}\label{l:skew}  
For every skew pair $L, L'$ of  cyclic subgroups  in $A$, there exists a constant $\eta>0$
such that if $x, y \in A$,  $\ell \in L$ and $\ell' \in L'$ satisfy $x \ell = \ell'y$, then
$$
 \max\{ d(1, \ell),\, d(1, \ell')\}\le \eta\,   \max \{ d(1,x),\, d(1,y) \}.
$$ 
\end{lemma}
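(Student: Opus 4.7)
The plan is to reduce the statement to elementary linear algebra in $\R^r$. Write $A$ additively and identify it with $\Z^r \subset \R^r$; let $u, u'$ be generators of $L, L'$, so that $\ell = mu$ and $\ell' = nu'$ for some integers $m, n$, and the hypothesis $x\ell = \ell'y$ becomes the vector equation
\[ x - y \ = \ nu' - mu \qquad \text{in } \R^r.\]

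The first step is to upgrade the skew condition $L \cap L' = 1$ (a $\Z$-linear statement) to $\R$-linear independence of $u$ and $u'$. If instead $u' = \lambda u$ for some $\lambda \in \R$, then $\lambda$ is necessarily rational, say $\lambda = p/q$ in lowest terms, since both $u$ and $u'$ lie in $\Z^r$; but then $qu' = pu$ would be a non-zero common element of $L$ and $L'$, a contradiction. So $u, u'$ span a $2$-plane $V \subset \R^r$, and the linear map $T\colon \R^2 \to V$ sending $(m, n) \mapsto nu' - mu$ is an isomorphism of finite-dimensional normed spaces.

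The second step exploits this isomorphism. Equip $\R^r$ with the Euclidean norm $\|\cdot\|_2$, which is bi-Lipschitz equivalent to the word metric on $A$ (with constants depending only on the chosen generating set), and let $C_0$ denote the operator norm of $T^{-1}$. Then
\[ \max\{|m|, |n|\} \ \le \ C_0\, \|nu' - mu\|_2 \ = \ C_0\, \|x - y\|_2 \ \le \ 2C_0 \max\{\|x\|_2, \|y\|_2\}.\]
Multiplying through by $\max\{\|u\|_2, \|u'\|_2\}$ bounds $\max\{\|\ell\|_2, \|\ell'\|_2\}$ linearly in $\max\{\|x\|_2, \|y\|_2\}$, and converting back to the word metric by the bi-Lipschitz equivalence yields the desired constant $\eta$. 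This $\eta$ depends on $u$ and $u'$ (hence on the fixed pair $L, L'$) but not on $x, y, \ell, \ell'$, which is exactly what the statement allows.

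I do not anticipate any substantive obstacle. The only step requiring a moment's care is the passage from $L \cap L' = 1$ to genuine $\R$-linear independence of $u, u'$; this uses crucially that $u, u'$ have rational coordinates, and hence that $A$ is free abelian. Everything else is routine finite-dimensional linear algebra, and the quantitative constants fall out of the boundedness of $T^{-1}$ on the $2$-plane $V$.
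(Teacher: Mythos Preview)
Your proof is correct and follows essentially the same approach as the paper's: the paper simply asserts that skew lines in $\E^r$ diverge linearly and that the embedding $\Z^r\hookrightarrow\E^r$ is bi-Lipschitz for the word metric, and you have supplied the details of exactly that argument. Your careful check that $L\cap L'=1$ in $\Z^r$ forces $\R$-linear independence of $u,u'$ is the one point the paper leaves implicit.
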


\begin{proof}
This follows immediately from the fact that skew lines in Euclidean space $\E^r$ diverge linearly and
the standard embedding $\Z^r\hookrightarrow\E^r$  is bi-Lipschitz 
when $\Z^r$ is endowed with the word metric $d$.
\end{proof}

We will use this lemma to prove:

\begin{theorem}\label{t:cl}
$\CL_{\bpq}(n) \simeq\CL_{\bpqp}(n) \simeq n$.
\end{theorem}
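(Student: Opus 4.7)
The plan is to deduce the theorem from a more general result announced in the introduction: if $G$ is a multiple HNN extension of a finitely generated free-abelian group $A$ whose cyclic associated subgroups form a skew family, then $\CL_G(n) \simeq n$. Both $\bpq$ and $\bpqp$ fit this framework, with $A = \<a, b\> \cong \Z^2$ and associated cyclic subgroups $\<a^q\>$, $\<a^p b\>$, $\<a^p b^{-1}\>$ (together with $\<b\>$ for $\bpqp$); these generate pairwise distinct lines through the origin in $\Z^2$, so their pairwise intersections are trivial and they form a skew family.

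The lower bound $\CL_G(n) \succeq n$ is the easy half: it suffices to exhibit, for each $n$, conjugate words $u \sim v_n$ of total length $O(n)$ whose shortest conjugator has length $\geq cn$ for some constant $c > 0$; for example, $u = s$ and $v_n = w_n s w_n^{-1}$ for a suitable word $w_n$ of length $n$ whose initial segment cannot be absorbed into the centraliser of $s$, the smallness of this centraliser being a consequence of the skew hypothesis via the action of $G$ on its Bass-Serre tree.

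For the upper bound, given $u \sim v$ with $|u|+|v| \le n$, I would work with a minimal annular diagram $\Delta$ for the conjugacy. Its $2$-cells are of two kinds: \emph{abelian cells} from $[a,b]=1$ or (in the case of $\bpqp$) $[b,\theta]=1$, and \emph{stable-letter cells}, one kind for each of $s, t, \theta$. The stable-letter cells of each type assemble into \emph{corridors}, topologically either intervals with both endpoints on $\partial\Delta$ or closed annuli whose core is null-homotopic or essential in $\Delta$; cutting $\Delta$ along all corridors yields \emph{slabs}, each bounded by corridor sides (words in the associated cyclic subgroups) together with arcs of $\partial\Delta$. The argument proceeds by (i) eliminating null-homotopic annular corridors by minimality; (ii) pushing each interval corridor whose two endpoints lie on the same boundary component into that component, which amounts to a short modification of $u$ or $v$ by conjugation; (iii) treating each essential annular corridor, which contributes a power $c^M$ of a generator of its associated subgroup to the conjugator with $M = O(n)$ bounded by the corridor's intersection with $\partial\Delta$, and ruling out coexistence of essential corridors of distinct stable-letter types (the intervening slabs would force an element of one associated subgroup to equal an element of another, contradicting skew); and (iv) with only radial interval corridors remaining, reading off a conjugating word by following a path across $\Delta$ that crosses each slab and corridor exactly once.

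The main obstacle will be step (iv): bounding the lengths of labels along the chosen path. Inside each slab, opposite corridor sides $\ell, \ell'$ lie in distinct associated cyclic subgroups and are related by arcs $x, y$ of $\partial\Delta$ via an equation $x \ell = \ell' y$. Lemma~\ref{l:skew} is precisely what bounds $|\ell|$ and $|\ell'|$ linearly in $\max\{|x|, |y|\}$; summing over all slabs, the total path length is $O(|\partial\Delta|) = O(n)$, yielding a conjugator of length $O(n)$ and completing the proof.
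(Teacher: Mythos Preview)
Your overall shape is right---reduce to a general skew HNN statement and argue via annular diagrams with corridors and slabs---and this matches the paper's route. But your sketch omits the one ingredient that actually makes step~(iv) work, and it mishandles step~(iii).

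\textbf{The missing distortion step.} In step~(iv) you apply Lemma~\ref{l:skew} to the equation $x\ell = \ell' y$ and conclude that $|\ell|,|\ell'|$ are $O(\max\{|x|,|y|\})$. But Lemma~\ref{l:skew} is a statement about the word metric on $A$, not on $G$. The arcs $x,y$ are subwords of $u,v$, so their $G$-length is at most $n$; their $A$-length, however, can be as large as $\kappa^\alpha n^\alpha$ because $A$ is distorted in $G$. Lemma~\ref{l:skew} then gives only $d_A(1,\ell)=O(n^\alpha)$. To recover a linear bound for the conjugator you must convert back: $d_G(1,\ell)\le \kappa\, d_A(1,\ell)^{1/\alpha}=O(n)$. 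This two-way use of the distortion estimate (Proposition~\ref{p:distort}) is exactly why the paper's general statement (Theorem~\ref{t:cl2}) carries the extra hypothesis $\frac{1}{k}d_G(1,x)^{1/\alpha}\le d_A(1,x)\le k\,d_G(1,x)^{1/\alpha}$, which your formulation drops. Without it the argument does not close, and your ``summing over all slabs gives $O(n)$'' is unjustified.

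\textbf{Two further gaps.} In step~(iii), an essential annular corridor does not meet $\partial\Delta$ at all, so ``$M=O(n)$ bounded by the corridor's intersection with $\partial\Delta$'' is vacuous; the paper instead argues that there are at most $2m$ essential annuli forming a single stack, and the conjugator through the stack is a product of at most $2m$ stable letters (a constant). In step~(iv) you also need to handle the case where adjacent corridor sides lie in the \emph{same} cyclic subgroup (so Lemma~\ref{l:skew} does not apply); the paper treats this ``all slabs parallel'' case separately and shows that then suitable cyclic permutations of $u$ and $v$ are actually equal in $G$.
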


In fact, we prove the following more general theorem which applies both to $G = \bpq$ and to $G=\bpqp$.  
In the following statement it is implicitly assumed that $A$ is the group generated by $a_1,\dots,a_r$ and that
 $c_i$ and $c_i'$ are words in these generators.

\begin{theorem}\label{t:cl2} 
Consider an HNN extension  $G$ of  $A = \langle a_1, \ldots, a_r \rangle \cong \Z^r$ with $m$ stable letters,
  $$G = \left\langle \, a_1, \ldots, a_r, t_1, \ldots, t_m \mid [a_j, a_{j'}] =1, \ 
  \ t_i^{-1} c_i t_i = c'_i  \  \ \forall i,j, j'\, \right\rangle,$$
 where  $t_i$ conjugates the infinite cyclic subgroup $L_i = \langle c_i \rangle$ to  $L'_i = \langle c'_i \rangle$ and 
the family $L_1, L'_1, \ldots, L_m, L'_m \leq A$ is skew.  
Suppose that there are constants $\alpha>1$ and $k>1$ such that for all $x \in A$,
$$
\frac{1}{k} d_G(1,x)^{1/\alpha} \le d_A(1,x) \le k\, d_G(1,x)^{1/\alpha}.
$$
Then $\CL_G(n)\simeq n$.
\end{theorem}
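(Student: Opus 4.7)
For the lower bound $\CL_G(n)\succeq n$ (assuming $m\ge 2$, which holds in our main applications), I would exhibit the conjugate pair $u=t_1$, $v=t_2^N t_1 t_2^{-N}$, which satisfies $|u|+|v|=2N+2$. Britton's lemma together with skewness ($L_1\cap L_1'=1$, whence $Z_G(t_1)=\langle t_1\rangle$) forces every conjugator into the coset $t_2^N\langle t_1\rangle$, and each such element is Britton-reduced of the form $t_2^N t_1^k$, so its $G$-length is at least $N$.

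For the upper bound $\CL_G(n)\preceq n$, given $u,v$ with $|u|+|v|\le n$ and $u\sim v$, the plan proceeds in three steps. \emph{Step 1 (cyclic reduction):} bring $u,v$ into cyclically Britton-reduced forms $\tilde u,\tilde v$ via conjugators of $G$-length $O(n)$ (each cancellation of a pair of inverse stable letters costs a single conjugating letter). \emph{Step 2 (hyperbolic case):} if $\tilde u,\tilde v$ contain stable letters, then since $A$ is abelian the HNN normal form theorem implies $\tilde v$ is a cyclic rotation of $\tilde u$, and the conjugator is a prefix of $\tilde u$ of $G$-length at most $|\tilde u|\le n$. \emph{Step 3 (elliptic case):} if $\tilde u,\tilde v\in A$, any conjugacy is realised by a chain $\tilde u=w_0,w_1,\ldots,w_\ell=\tilde v$ in $A$, with each step $w_j=t_{i_j}^{-\epsilon_j}w_{j-1}t_{i_j}^{\epsilon_j}$ requiring $w_{j-1}$ to lie in the appropriate associated subgroup.

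The heart of Step 3 is to bound the chain length $\ell$: at each intermediate stage $w_j$ must lie in the intersection of the output associated subgroup of one step and the input of the next, which by skewness is trivial unless the two subgroups coincide. Hence the chain's associated-subgroup signature is a walk in a finite graph on at most $2m$ vertices, and after cycle-removal one obtains $\ell=O(m)=O(1)$. Since $A$ is abelian, the conjugator can then be taken as the pure stable-letter word $g=t_{i_\ell}^{-\epsilon_\ell}\cdots t_{i_1}^{-\epsilon_1}$, of $G$-length $O(1)$. Summing the three steps yields a conjugator of total $G$-length $O(n)$.

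\textbf{Main obstacle.} The most delicate point is Step 3: removing a cycle in the signature walk requires producing a shorter chain whose intermediate $A$-elements remain in the appropriate cyclic subgroups and do not blow up in $A$-magnitude. This is where Lemma~\ref{l:skew} intervenes, giving a uniform $\eta$-bound on the $A$-magnitudes of the intermediate $w_j$'s in terms of $|\tilde u|_A$, which the distortion hypothesis in turn converts into a linear bound in $n$ on the $G$-length of any residual $A$-adjustments arising in the conjugator construction.
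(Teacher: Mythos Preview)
Your Step~2 contains a genuine gap, and it is precisely the place where the skewness hypothesis and Lemma~\ref{l:skew} are needed --- not Step~3 as you suggest.

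The claim that cyclically Britton-reduced conjugate elements $\tilde u,\tilde v$ containing stable letters must be cyclic rotations of one another is false, even when the base group $A$ is abelian. Collins' lemma for HNN extensions says only that some cyclic rotation $\tilde u'$ of $\tilde u$ satisfies $c^{-1}\tilde u' c=\tilde v$ for some element $c$ in an associated subgroup $L_i$ or $L_i'$. Abelianness of $A$ does nothing to force $c=1$: conjugating by a power of $c_i$ shifts \emph{every} $A$-syllable of $\tilde u'$ simultaneously (pushing powers of $c_i,c_i'$ through the stable letters), producing a new cyclically reduced word that is typically not a rotation of the old one. For a concrete picture in $B_{pq}$, take $\tilde u=st$ and conjugate by $a^q\in L_s$; the result is cyclically reduced but not a rotation of $st$.

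Bounding $d_G(1,c)$ by $O(n)$ is exactly what the paper does in its ``radial corridors'' analysis. In an annular diagram for $\tilde u\sim\tilde v$ the radial $t_i$-corridors cut the annulus into slabs; the two sides of each slab lie in associated subgroups, while the top and bottom arcs are $A$-elements of $A$-length $O(n^{\alpha})$ by the distortion hypothesis. If some slab has its two sides in \emph{distinct} subgroups of the skew family, Lemma~\ref{l:skew} bounds the $A$-length of those sides --- and hence of the conjugator $c$ --- by $O(n^{\alpha})$, which the distortion hypothesis converts back to $d_G(1,c)=O(n)$. If every slab has parallel sides (all adjacent associated subgroups coincide), a separate argument shows $c$ commutes with $\tilde u'$, whence $\tilde u'=\tilde v$ and the conjugator really is just a rotation. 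Your plan omits this dichotomy entirely.

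By contrast, your Step~3 is essentially sound and close to the paper's treatment of the $t_i$-annulus case; the bound $\ell\le 2m$ there does follow from skewness, and the $A$-abelian trick of stripping the conjugator down to a pure stable-letter word is exactly what the paper uses. Your Step~1 is also a bit quick --- ``a single conjugating letter'' per cancellation is not literally correct --- but an $O(n)$ bound on the cyclic-reduction conjugator is recoverable. The substantive missing idea is the skew-slab argument in the hyperbolic case.
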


\begin{proof} By hypothesis, no cyclic subgroup of $A$ is exponentially distorted in $G$.  So, if powers $x^{\lambda}$ and $x^{\mu}$ of a non-trivial element $x$ of $A$ are conjugate in $G$, then  $\lambda = \pm \mu$.

Consider a minimal-area annular diagram $\Delta$   that portrays a conjugacy from $v$ to $u$, with
$v$ as the label in the inner boundary cycle and $u$ as the label on the outer boundary cycle.
We consider the  $t_i$-corridors and annuli in this diagram.  
General considerations (detailed in \cite{BrRiSa}) tell us that we can assume there are no inessential $t_i$-annuli in $\Delta$.     

Suppose there is an essential  $t_i$-annulus.  We will argue that $\Delta$ has at most two $t_i$-annuli for each $i$ and that
and if there are two then the $t_i$-edges in one annulus point towards the inner boundary cycle and the $t_i$-edges
in the other annulus point outwards.  To see that this is the case, note that 
if there are two  $t_i$-annuli, then their boundary cycles are labelled by $c_i^{\lambda}$, ${c'}_i^{\lambda}$,  $c_i^{\mu}$, and ${c'}_i^{\mu}$ for some $\lambda, \mu \in \Z$ and these four words represent conjugate elements in $G$, so as above, 
$\lambda = \pm \mu$.  If it were the case
that $\lambda = \mu$,  we  could delete from $\Delta$ either the  interior 
of the subdiagram that the cycles labelled $c_i^{\lambda}$  and  $c_i^{\mu}$ cobound or the subdiagram
that the cycles labelled ${c'}_i^{\lambda}$  and  ${c'}_i^{\mu}$ cobound, and  then identify  the cycles; at least one of these two operations would reduce area, contrary to our assumption that $\Delta$ has minimal area. This analysis excludes the possibility of there being three essential $t_i$-annuli because any three must include a pair that gives rise to the reduction we have just described (but it leaves open the possibility two $t_i$-annuli provided $\lambda = - \mu$).   
 
We have argued that there can be at most $2m$ 
essential $t_i$-annuli in $\Delta$. The next thing to observe is that these annuli must form a single stack, 
as illustrated on the left in  Figure~\ref{fig:annuli},. To see why this is true,  note if there were a non-empty subdiagram between 
two successive corridors in the radial order, then this subdiagram would illustrate a conjugacy in $A$ between
the label on the outer boundary cycle of one $t_i$-annulus and the label on the inner boundary boundary cycle 
of the next annulus; as $A$ is abelian, these labels would have to be equal, as group elements (read from any basepoint); 
in fact,  since each label is a power of $c_i$ or $c_i'$ and the family of amalgamated cyclic subgroups
 is skew (which excludes the possibilities $c_i^{n_1}= c_j^{n_2}$ and $c_i^{n_1}= {c'}_j^{n_2}$ with  $|n_1|\neq |n_2|$),
 the labels must be equal (read from suitable basepoints),  and 
we could delete the interior of the subdiagram to obtain a smaller-area diagram
contradicting the assumed minimality of $\Delta$. 

 Suppose that the stack of $t_i$-annuli 
 is non-empty and  consider the two complementary subdiagrams (which may be degenerate).  If there are no $t_i$-corridors in
 one of these,  the inner one $\Delta_0$ say, then it portrays a conjugacy in $A$ from $v$ to the word
 labelling the inner boundary cycle of the stack of corridors. 
 As $A$ is abelian,  this latter word (which is a power
 of some $c_i$ or $c_i'$)
 is equal to $v$ in $A$.  (Note that the absence of $t_i$-corridors emanating from
 the cycle labeled $v$ forces $v$ to be a word in the generators   $a_j$ of $A$.) 
 We now modify $\Delta$ by replacing  $\Delta_0$ with a van~Kampen
 diagram portraying this equality.  The basepoint of this van Kampen diagram is the point of the boundary at which the
 label $v$ begins, and this basepoint also lies on the inner boundary cycle of the stack of annuli.  
 
 This argument shows that if there are no $t_i$-corridors in either of the two subdiagrams of $\Delta$ complementary
 to the stack of annuli,  then $u$ and $v$ are words  in the generators of  $A$ and after modification we can assume that
 there is a path in the 1-skeleton of $\Delta$ from the outer boundary cycle of $\Delta$ to the inner boundary cycle, crossing each  $t_i$-annulus exactly once, and so $u$ and $v$ have cyclic permutations that can be conjugated to each other by some word $W = w_0 t_{i_1}^{\varepsilon_1} w_1 \cdots t_{i_k}^{\varepsilon_k} w_k$ where $w_0, \ldots, w_k$ are words on the generators of $A$,  $k \leq 2m$, $i_1, \ldots, i_k \in \{1, \ldots, m\}$, and $\varepsilon_1, \ldots, \varepsilon_k \in \{1, -1\}$.   
 For every prefix $W_0$ of $W$, we have $W_0^{-1} u W_0 \in A$, and so, because $A$ is abelian,    $T = t_{i_1}^{\varepsilon_1}  \cdots t_{i_k}^{\varepsilon_k}$  conjugates $u$ to $v$.
 The length of $T$ is  $k \leq 2m$, so in this case we have $\CL(u,v) \le 2m + (|u|+|v|)/2$.
 
 \begin{figure}[ht]
\begin{overpic}
{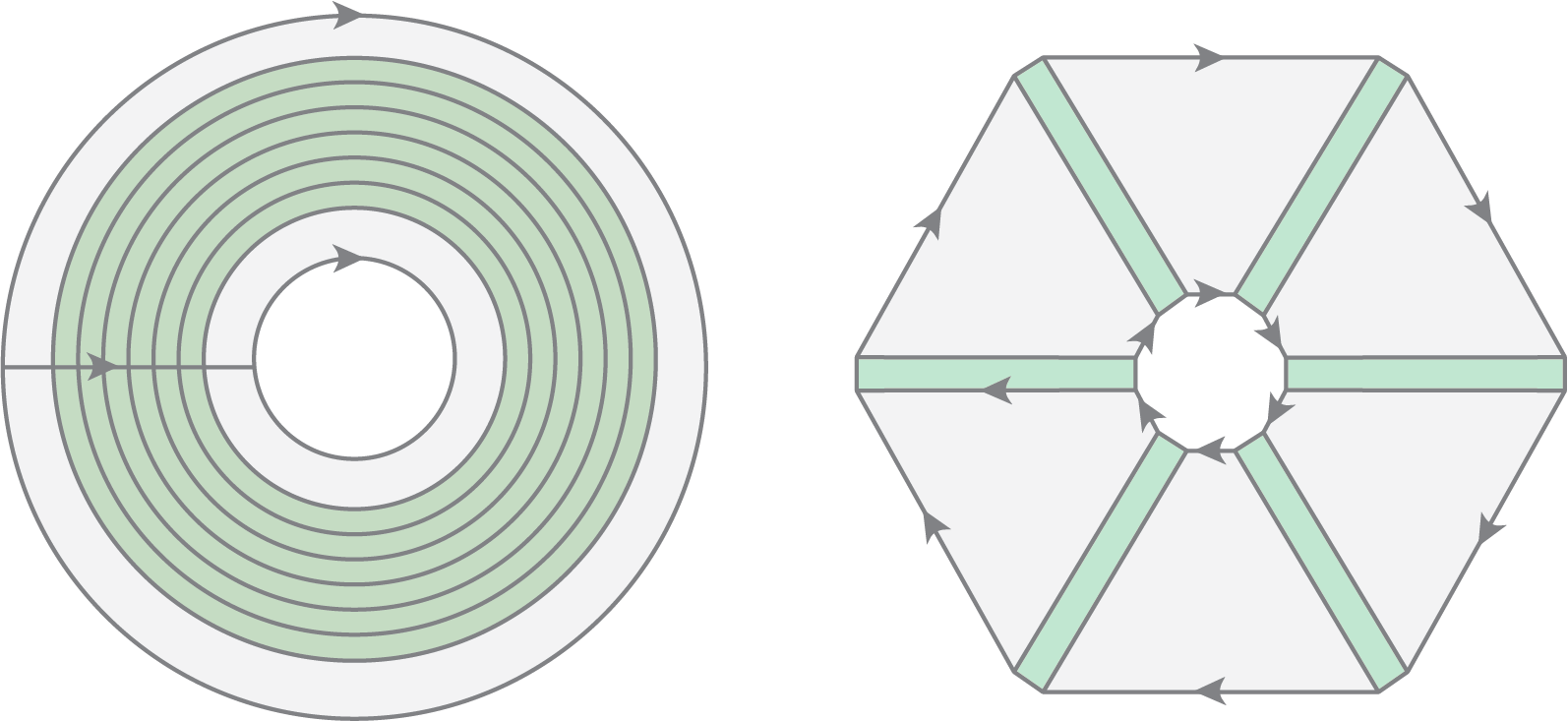}
 \put(150,160){\small{$\Delta$}}    
 \put(102,110){\small{$\Delta_0$}}    
 \put(85,176){\small{$u$}}    
 \put(85,105){\small{$v$}}    
 \put(21,75){\small{$W$}}    
 \put(360,160){\small{$\Delta$}}    
 \put(240,68){\small{$c^M$}}    
 \put(215,125){\small{$u_1$}}    
  \put(290,171){\small{$u_2$}}      
 \put(367,125){\small{$u_3$}}     
 \put(367,40){\small{$u_4$}}     
 \put(293,-4){\small{$u_5$}}      
 \put(215,40){\small{$u_6$}}      
 \put(284, 90){\small{$v_1$}}    
  \put(292,96){\small{$v_2$}}      
 \put(302,90){\small{$v_3$}}     
 \put(301,76){\small{$v_4$}}     
 \put(292,72){\small{$v_5$}}      
 \put(283,77){\small{$v_6$}}  
 \put(197,82){\small{$t_{i_1}^{\varepsilon_1}$}}
 \put(243,167){\small{$t_{i_2}^{\varepsilon_2}$}}
 \put(340,167){\small{$t_{i_2}^{\varepsilon_3}$}}
 \put(387,82){\small{$t_{i_2}^{\varepsilon_4}$}}
 \put(342,0){\small{$t_{i_2}^{\varepsilon_5}$}}
 \put(242,0){\small{$t_{i_2}^{\varepsilon_6}$}}

\end{overpic}
 \caption{Illustrating our proof of Theorem~\ref{t:cl2}}
  \label{fig:annuli}
\end{figure}

 If there are $t_i$-corridors in one of complementary subdiagrams,  $\Delta_0$ say,
 then this argument needs only a slight adjustment.  
 We focus now on the subdiagram of $\Delta_0$ that contains the inner boundary cycle $\rho$ of the
 stack of $t_i$-rings and is a connected component of the diagram obtained by deleting the interiors of
 all $t_i$-corridors from $\Delta_0$.  
 The path in the boundary of this subdiagram that  cobounds  with $\rho$
 is a concatenation of the sides of $t_i$-corridors (which are labelled by powers of the words $c_i,$ and $c_i'$ in the
 generators of $A$) interspersed by subwords of $v$ that contain only generators of $A$.
  When read  from a vertex at the
 end of a $t_i$-corridor,  the label $v^*$  on this path is equal as an element of $A$ to  $v$,
because these words cobound a disc diagram.  We treat the subdiagram adjacent
 to the outer boundary cycle of $\Delta$ similarly, and we are reduced to the previous case (that of no $t_i$-corridors)
 once we have passed from the elements  $u\sim v$ to  the elements $u^*\sim v^*$ via cyclic permutations.
 This time we conclude   that $\CL(u,v) \le 2m + (|u|+|v|)$. 
   
It remains to consider the case of a minimal annular diagram $\Delta$ that contains no $t_i$-annuli.  If there are no
radial $t_i$-corridors, then  $u,v\in A$ and $\Delta$ shows that they are conjugate, hence equal, in $A$.
 Assume, then,  that there are radial corridors in $\Delta$.  
These radial corridors correspond to the stable
letters in the HNN normal form of the elements $u$ and $v$.  
We now write $u$ and $v$ as formal products (HNN normal form) with occurrences of the stable letters separated
by elements of $A$ (so arcs between corridors  in the boundary of the diagram
 are now labelled by elements  of $A$ not by words).  Note that although $d_{G}(1,u)\le n$, 
the obvious bound we have on the lengths of geodesic words in $A$ representing each these elements
is $k^\alpha n^\alpha$, because of the distortion of $A$ in $G$. 

Drawing the annular diagram as illustrated right in Figure~\ref{fig:annuli}, 
we consider the slab regions between the radial corridors. A key point to 
observe is that if the sides of a pair of bounding corridors are labelled by words from cyclic groups
that intersect trivially,   then the
corresponding lines in $\E^r$ are skew, 
so since the words on the inner and outer cycles at the top and bottom of
the slab have length at most $k^\alpha n^\alpha$ (in the generators of $A$), the sides of the corridors can have length at most
$K n^\alpha$ in $A$, where $K>0$ is a constant  whose existence follows from Lemma~\ref{l:skew}. 
Taking further account of the assumed distortion of $A$,
this tells us that the distance in $G$ between the ends of the corridor (hence the inner and outer cycles of the annular diagram)
is bounded by a constant times $n$. This is the bound we seek in this case.

The only remaining case is where the sides of each slab are parallel,  meaning that the sides of the
two corridors bounding the slab are labelled by  elements from the same cyclic subgroup of $A$. 
We shall argue that in this case,   $u$ and $v$ have cyclic permutations that are equal as elements of $G$,
and hence they are conjugate by an element of length less than $(|u|+|v|)/2$.
The cyclic permutations in question are those  read from the ends of one side of a radial $t_{i}$-corridor;
if $u^* = t_{i_1}^{\varepsilon_{i_1}} u_1  \cdots  t_{i_r}^{\varepsilon_{i_r}} u_r$ and $v^* = t_{i_1}^{\varepsilon_{i_1}} v_1 \cdots t_{i_r}^{\varepsilon_{i_r}} v_r$ are these  cyclic permutations, where the radial $t_i$-corridors in $\Delta$ connect the $t_{i_j}^{\varepsilon_{i_j}}$ in $u^*$ to that in $v^*$, then $c^{M}u^* c^{-M}=v^*$  where $c$ is the
generator of an associated  subgroup of $t_{i_1}$. (Explicitly, if $\varepsilon_{i_1} =1$ then $c=c_{i_1}$, and if $\varepsilon_{i_1} =-1$  then $c=c'_{i_1}$.)    
The hypothesis that the sides of each slab are parallel is equivalent to the algebraic condition that if
 the stable letters of the (radial) corridors appear in $u$ and $v$ appear in the cyclic order $t_{i_1}^{\varepsilon_{i_1}}, \dots, t_{i_r}^{\varepsilon_{i_r}}$,  then the consecutive associated subgroups coincide in the following sense:  
 with indices mod $r$, if   $\varepsilon_{i_j} = \varepsilon_{i_{j+1}} = 1$ then  $L'_{i_j} = L_{i_{j+1}}$ and so $c'_{i_j} = c^{\pm 1}_{i_{j+1}}$;   the other three possibilities for $\varepsilon_{i_j}, \varepsilon_{i_{j+1}} \in \{\pm 1 \}$ are similar.

It follows  that $c$ actually commutes with $u^*$,  and therefore $u^*=v^*$,  as claimed.  In more detail, $c^{\pm M}u^* c^{\mp M} = u^*$ because $c_{i_j}^{\pm M} t_{i_j} {c'}_{i_j}^{\mp M} = t_{i_j}$, and  ${c'}_{i_j}^{\pm M} t_{i_j}^{-1} c_{i_j}^{\mp M} = t_{i_j}^{-1}$, and  $c^{\pm M}_{i_j} u_j c_{i_j}^{\mp M} = {c'}_{i_j}^{\pm M} u_j {c'}_{i_j}^{\mp M} = u_j$, and we
have the consecutive equalities described above between the $c_{i_j}$ or $c'_{i_j}$ and the $c_{i_{j+1}}$ or  $c'_{i_{j+1}}$.
\end{proof}

\section{The distortion of the centre in $\tbpqp$} \label{s:dist of centre in tbpqp}

Throughout, we work with the presentations of $\bpqp$ and  $\tbpqp$ given in the introduction.

\begin{lemma}\label{l:onlyTh}
Let $w$ be a word in the generators of $\bpqp$.  Suppose that $w=1$ in $\bpqp$ and that there is a 
van Kampen diagram with boundary label $w$ that contains exactly $M$ 2-cells labelled $[b,\th]^{\pm 1}$. Then
$w=z^m$ in $\tbpqp$ with $|m|\le M$.
\end{lemma}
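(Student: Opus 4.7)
The plan is to interpret the given van Kampen diagram over $\bpqp$ as a word equation in $\tbpqp$, exploiting the fact that the presentation of $\tbpqp$ differs from that of $\bpqp$ only in replacing the relation $[b,\th]=1$ by $[b,\th]=z$, with $z$ central.

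First I would recall the standard consequence of van Kampen's Lemma: if $\Delta$ is a van Kampen diagram over a finite presentation $\langle A \mid R\rangle$ for $w$, then in the free group $F(A)$ we have an equality
\[
w \ = \ \prod_{\text{cells } c \text{ of } \Delta} g_c^{-1} r_c g_c,
\]
where $r_c\in R^{\pm 1}$ is the boundary label of the 2-cell $c$ and $g_c$ is a word expressing the location of $c$ in $\Delta$. Applied to our diagram over $\bpqp$, with $A=\{a,b,s,t,\th\}$, this writes $w$ as a product of conjugates of the relators $[a,b]^{\pm 1}$, $(s^{-1}a^q s\, (a^pb)^{-1})^{\pm 1}$, $(t^{-1}a^q t\, (a^pb^{-1})^{-1})^{\pm 1}$, and $[b,\th]^{\pm 1}$, where the last type appears exactly $M$ times in total.

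Next I would evaluate this equality in $\tbpqp$. The first three families of relators are still relators of $\tbpqp$, so their conjugates equal $1$ in $\tbpqp$. Each factor of the form $g_c^{-1}[b,\th]^{\pm 1} g_c$ is evaluated using the single modified relation $[b,\th]=z$: since $z$ is central in $\tbpqp$, we get $g_c^{-1}[b,\th]^{\pm 1} g_c = g_c^{-1} z^{\pm 1} g_c = z^{\pm 1}$. Substituting into the displayed product and again using centrality of $z$, the right-hand side collapses to $z^{m}$ with $m = \sum_c \epsilon_c$, where $\epsilon_c=\pm 1$ according to the orientation of the $[b,\th]$-cell $c$ in $\Delta$. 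Hence $w = z^m$ in $\tbpqp$.

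Finally, $|m| \le \sum_c |\epsilon_c| = M$, which is the desired bound. There is no real obstacle here: the only conceptual point is that since $z$ is central, each lift of a $[b,\th]$-cell contributes $\pm z$ independently of the basepoint word $g_c$, and the count of such cells controls $|m|$.
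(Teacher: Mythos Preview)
Your proof is correct and follows essentially the same argument as the paper's: both use the free-group equality coming from van Kampen's Lemma, evaluate it in $\tbpqp$ where all relators except $[b,\th]^{\pm 1}$ still vanish and the remaining $M$ conjugates each contribute $z^{\pm 1}$ by centrality, and conclude $w=z^m$ with $|m|\le M$.
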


\begin{proof}
The standard proof of van Kampen's Lemma translates a van Kampen diagram $\Delta$ with boundary label $w$
into an equality in the free group on the generators,
\begin{equation}\label{e:free}
w = \prod_{i=1}^A u_i^{-1}\rho_i u_i,
\end{equation} 
with each $\rho_i$ a defining relation or its inverse; the integer
 $A$ is the number of 2-cells in $\Delta$ and the list $\rho_1,\dots, \rho_A$ records the boundary labels 
 on these 2-cells. In $\tbpqp$,  each $\rho_i$ that is not $[b,\th]$ or its inverse
equals the identity element, whereas   $[b,\th] = z$.  Thus, in $\tbpqp$,  each factor of the product in the
equality (\ref{e:free}) reduces to the identity or else to $z^{\pm 1}$ (with $u_i^{-1}zu_i=z$ in the latter case, because
$z$ is central).  Exactly $M$ factors reduce to $z^{\pm 1}$,  so when we  cancel copies of
$z$ with $z^{-1}$, we are left with $w=z^m$ where $|m|\le M$.
\end{proof}

\begin{proposition}\label{p:new}
For all $w\in F(a,b,s,t,\th )$,
$$ w =1 \text{ in } \bpqp \ \iff \ w = z^N \text{ in } \tbpqp \text{ with } |N| \le \k^\alpha |w|^{\alpha +1},$$ 
where  $\kappa >1$ is the constant of Proposition~\ref{p:distort}. 
\end{proposition}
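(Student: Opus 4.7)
The plan is to bound the number $M$ of 2-cells labelled $[b,\th]^{\pm 1}$ in some van Kampen diagram for $w$ over $\bpqp$ and then apply Lemma~\ref{l:onlyTh}. The reverse implication is immediate: the quotient map $\tbpqp\onto\bpqp$ killing $z$ sends the relation $[b,\th]=z$ to the valid $[b,\th]=1$, so $w=z^N$ in $\tbpqp$ descends to $w=1$ in $\bpqp$.

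For the forward direction, fix a van Kampen diagram $\Delta$ for $w$ over the given presentation of $\bpqp$. Because $\th$ appears only in the relation $[b,\th]=1$, the $[b,\th]^{\pm1}$-cells organize into $\th$-corridors whose sides are labelled by powers of $b$. Since $\Delta$ is simply connected there are no $\th$-annuli, so each corridor is a topological disk whose two $\th$-edges lie on $\partial\Delta$. The corridors therefore furnish a non-crossing pairing of the $\th$-letters of $w$, and there are at most $|w|/2$ of them.

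For a $\th$-corridor $C$ with sides labelled $b^{k_C}$ (containing $|k_C|$ cells), each of the two sides is a simple arc in $\Delta$ whose endpoints lie on $\partial\Delta$, so by planarity it separates $\Delta$ into two sub-disks. The sub-disk that is disjoint from $C$ is itself a van Kampen diagram over $\bpqp$ whose boundary label is $b^{\pm k_C}\cdot\beta^{-1}$, where $\beta$ is one of the two arcs into which the $\th$-edges of $C$ divide $\partial\Delta$ (hence $|\beta|\le|w|$). Therefore $b^{\pm k_C}=\beta$ in $\bpqp$, i.e.\ $d_{\bpqp}(1,b^{k_C})\le|w|$. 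The retraction $\bpqp\onto\bpq$ killing $\th$ (well-defined because setting $\th=1$ makes $[b,1]=1$ trivially true) converts any $\bpqp$-representative of $b^{k_C}$ into a $\bpq$-representative of no greater length, so $d_{\bpq}(1,b^{k_C})\le|w|$. Proposition~\ref{p:distort}, applied with $g=b^{k_C}\in\A$, then gives
$$|k_C|\,=\,d_\A(1,b^{k_C})\,\le\,\k^\alpha\,d_{\bpq}(1,b^{k_C})^\alpha\,\le\,\k^\alpha|w|^\alpha.$$
Summing over the at most $|w|/2$ corridors yields $M\le\tfrac12\k^\alpha|w|^{\alpha+1}$, and Lemma~\ref{l:onlyTh} gives $|N|\le M\le\k^\alpha|w|^{\alpha+1}$.

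The main point to justify carefully is the geometric claim about $\th$-corridors: that each has its two $\th$-edges on $\partial\Delta$ (so there are at most $|w|/2$ of them) and that each of its $b^{k_C}$-sides bounds, together with a sub-arc of $\partial\Delta$, a planar sub-diagram over $\bpqp$. The first follows from simple connectivity of $\Delta$, which rules out $\th$-annuli; the second is the standard Jordan-curve observation that a simple arc in a disk joining two boundary points separates the disk into two sub-disks, together with the fact that $C$ lies on a single side of this arc.
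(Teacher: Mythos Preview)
Your proof is correct and follows essentially the same route as the paper's: bound the number and length of $\th$-corridors in a van Kampen diagram for $w$, apply the distortion estimate of Proposition~\ref{p:distort} to bound each corridor's area by $\k^\alpha|w|^\alpha$, and then invoke Lemma~\ref{l:onlyTh}. One small imprecision: simple connectivity of $\Delta$ does not by itself rule out $\th$-annuli; the paper handles this by taking a minimal-area diagram, and you could equally observe that any $\th$-annulus would enclose a subdiagram proving $b^k=1$ in $\bpqp$, forcing $k=0$ since $b$ has infinite order.
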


\begin{proof}
The implication ($\Leftarrow$) is trivial. For the implication ($\Rightarrow$),  suppose $|w|=n$ and
$w=1$ in $\bpqp$, hence $w=z^N$ in $\tbpqp$; we must bound $|N|$. Consider a minimal-area  
van Kampen diagram $\Delta$ for $w$ over our fixed presentation  (\ref{e:bpqp}) for $\bpqp$.  The only relation involving $\th$
is $[\th, b]=1$  and all of the 2-cells with this label are contained in $\th$-corridors.  Each such
corridor begins and ends on the boundary of the diagram, so the number of these corridors 
is at most $|w|/2$.  Each side of a $\th$-corridor is a path labelled by a word $b^R$
where $R$ is the area of the corridor,   and in $\bpqp$
this word defines the same element as either of the two arcs in the boundary cycle of $\Delta$ that have the
same endpoints as the path.
Each of these arcs has length less than $n$, so $d(1,b^R)< n$ in  $\bpqp$. 
Moreover,  as $b$ lies in the retract $\bpq$ obtained by killing $\th$, we have $d(1,b^R)< n$ in  $\bpq$ (with our fixed
choice of generators).  It therefore follows from Proposition \ref{p:distort} that $|R|\le \k^\alpha n^\alpha$. 
Thus the area of each $\th$-corridor in $\Delta$ is at most $\k^\alpha n^\alpha$.
There are fewer than $n$ corridors, so the number of 2-cells in the diagram labelled $[b,\th]^{\pm 1}$ is 
 less than $\k^\alpha n^{\alpha+1}$. 
Lemma~\ref{l:onlyTh} tells us that $|N|$ is bounded above by this number. 
\end{proof}

\begin{proposition} \label{pp:dist-z}
The distortion of the centre $Z=\<z\><\tbpqp$ is  $\dist_{Z}(n) \simeq n^{\alpha+1}$.
\end{proposition}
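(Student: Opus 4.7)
The plan is to prove the two inequalities separately, both as applications of results established earlier in this section and the preceding one.

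For the upper bound $\dist_Z(n) \preceq n^{\alpha+1}$, I would start with any word $w$ of length at most $n$ in the generators $\{a,b,s,t,\th,z\}$ of $\tbpqp$ that represents $z^N$. Replacing each occurrence of $z^{\pm 1}$ by the four-letter commutator $[b,\th]^{\pm 1}$ (which equals $z^{\pm 1}$ in $\tbpqp$) produces a word $w'$ of length at most $4n$ over the alphabet $\{a,b,s,t,\th\}$ that still represents $z^N$. The canonical surjection $\tbpqp\twoheadrightarrow \bpqp$ kills $z$, so $w' = 1$ in $\bpqp$, and Proposition~\ref{p:new} immediately yields $|N| \le \kappa^\alpha (4n)^{\alpha+1}$.

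For the lower bound $\dist_Z(n) \succeq n^{\alpha+1}$, I would exhibit a family of short words representing high powers of $z$. Since $[a,b]=1$ and $s^{-1}a^qs = a^pb$ hold in $\tbpqp$, the identity $b^M = a^{-pM} s^{-1} a^{qM} s$ is valid there. Combining this identity with the snowflake words $w_{pM}$ and $w_{qM}$ of Definition~\ref{Snowflake words}, and invoking Lemma~\ref{l:snowflake-words}, one constructs a word $u_M$ of length $\lesssim M^{1/\alpha}$ that equals $b^M$ in $\tbpqp$. A short induction on $k$, using $\th^{-1}b\th = bz$ and the centrality of $z$, then shows $\th^{-k} b^M \th^k b^{-M} = z^{Mk}$. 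Hence $\th^{-k} u_M \th^k u_M^{-1}$ is a word of length $\lesssim k + M^{1/\alpha}$ representing $z^{Mk}$ in $\tbpqp$. Taking $k \simeq M^{1/\alpha}$ balances the two contributions; writing $n \simeq M^{1/\alpha}$ gives $M \simeq n^\alpha$ and hence $Mk \simeq n^{\alpha+1}$, as required.

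The serious content of the argument is already encapsulated in Proposition~\ref{p:new}, and beneath it in Proposition~\ref{p:distort} on the distortion of $\A$ in $\bpq$; the present proposition is essentially assembly. The only point warranting care is that the identities used to build $u_M$ (the snowflake identities for $a^N$, and $b^M = a^{-pM} s^{-1} a^{qM} s$) remain valid in $\tbpqp$, which is automatic because every defining relation of $\bpq$ persists in $\tbpqp$.
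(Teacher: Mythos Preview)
Your proof is correct and follows essentially the same approach as the paper. For the upper bound you make explicit the step of replacing $z^{\pm 1}$ by $[b,\th]^{\pm 1}$ before invoking Proposition~\ref{p:new}, which the paper leaves implicit; for the lower bound you build the short word for $b^M$ by hand from snowflake words and the identity $b^M = a^{-pM}s^{-1}a^{qM}s$, whereas the paper simply cites Proposition~\ref{p:distort} to get a geodesic $\omega$ with $|\omega|\le \kappa n$ representing $b^{\lfloor n^\alpha\rfloor}$---but both routes produce the same commutator $[\,b^M,\th^k\,]=z^{Mk}$ and the same estimate.
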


\begin{proof}
Proposition \ref{p:new} shows that $\dist_{Z}(n) \preceq n^{\alpha+1}$.  
To obtain a complementary lower bound, we 
choose a geodesic word $\omega$ representing $b^{\lfloor n^\alpha\rfloor}$ in $\bpq$.  Proposition~\ref{p:distort} tells us that
$|\omega| \le \k n$. Thus  $W_n:=\omega^{-1}\th^{-n}\omega\th^n$ has length at most $2n(\k+1)$.
The equality  $\omega  = b^{\lfloor n^\alpha\rfloor}$ in $\bpq$ remains valid in  $\tbpqp$ because the defining relations for $\bpq$ are all among the defining relations for $\tbpqp$.  
So in $\tbpqp$ we have
$$
W_n = [ b^{\lfloor n^\alpha\rfloor},\, \th^n] = z^{n \lfloor n^\alpha\rfloor}.
$$
As $|W_n|\le 2n(\k+1)$, this establishes the desired lower bound $\dist_{Z}(n) \succeq n^{\alpha+1}$.
\end{proof}

\begin{remark}\label{r:all}
The integer $n \lfloor n^\alpha\rfloor$ that appeared at the end of the preceding proof,  which we
now call $M_0$,  lies between 
$n^{\alpha +1} -n$ and $\lceil n^{\alpha +1}\rceil$.  If $M$ is any other integer in this range, then $M = M_0+\e$
with $|\e|\le n$, so $Wz^\e$ is a word of length less than $2n(\k +2)$ that equals $z^M$ in $\tbpqp$.
\end{remark} 

\begin{lemma} \label{l: lower bound on CL}
$\CL_{\tbpqp} (n) \succeq n^{\alpha +1}$.
\end{lemma}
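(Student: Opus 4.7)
The plan is to exhibit conjugate elements $u,v \in \tbpqp$ with $|u|+|v| = O(n)$ such that every conjugating word has length $\succeq n^{\alpha+1}$.  Fix $n$ and set $M := \lceil n^{\alpha+1} \rceil$.  By Remark~\ref{r:all}, there is a word $W$ of length at most $2n(\k+2)$ representing $z^M$ in $\tbpqp$.  I take $u := b$ and $v := bW$, so that $v = bz^M$ in $\tbpqp$ and $|u|+|v| = O(n)$.  Since $z$ is central and $[b,\th]=z$, an easy induction gives $\th^{-M} b \th^M = b z^M$, so $u \sim v$ via the conjugator $\th^M$.

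The core task is to show that every word $\omega$ satisfying $\omega^{-1} b \omega = b z^M$ in $\tbpqp$ has $|\omega| \ge M$.  For this I introduce the homomorphism
\[
\phi : \tbpqp \longrightarrow \Z, \qquad \phi(\th) = 1, \quad \phi(a)=\phi(b)=\phi(s)=\phi(t)=\phi(z) = 0.
\]
A routine check of the relations in \eqref{e:tbpqp} shows that $\phi$ is well-defined: every defining relation abelianizes to $0=0$ (in particular $\phi([b,\th]) = 0 = \phi(z)$, and the snowflake relations involve only letters on which $\phi$ vanishes).  Since $|\phi|$ takes values in $\{0,1\}$ on generators, $|\phi(\omega)| \le |\omega|$ for every word $\omega$, so it suffices to prove $\phi(\omega) = M$.

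Because $\phi(z) = 0$, $\phi$ descends to a homomorphism $\bar\phi : \bpqp \to \Z$ along $\tbpqp \twoheadrightarrow \bpqp$; writing $\bar\omega \in \bpqp$ for the image of $\omega$, we have $\phi(\omega) = \bar\phi(\bar\omega)$.  From $\omega^{-1} b \omega = b z^M$ we get $\bar\omega \in C_{\bpqp}(b)$.  Viewing $\bpqp$ as an HNN extension of $\bpq$ with associated subgroup $\langle b \rangle$ and stable letter $\th$, and using that $C_{\bpq}(b) = \A = \langle a,b \rangle$ (since $b$ lies in none of the associated cyclic subgroups attached to $s$ or $t$ in $\bpq$), a standard Bass--Serre argument identifies
\[
C_{\bpqp}(b) \;=\; \langle\, a,b,\th \mid [a,b]=[b,\th]=1\, \rangle.
\]
The zeta-map $\zeta_b : C_{\bpqp}(b) \to \Z$ satisfies $\zeta_b(\bar\omega) = M$ by definition, and sends $a,b \mapsto 0$ and $\th \mapsto 1$ (using $\th^{-1} b \th = bz$ in $\tbpqp$).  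Since $\bar\phi|_{C_{\bpqp}(b)}$ is also a homomorphism to the abelian group $\Z$ with the same values on the generating set $\{a,b,\th\}$, the two homomorphisms coincide on $C_{\bpqp}(b)$.  Therefore $\phi(\omega) = \bar\phi(\bar\omega) = \zeta_b(\bar\omega) = M$, so $|\omega| \ge M \ge n^{\alpha+1}$.  Combined with $|u|+|v| = O(n)$, this yields $\CL_{\tbpqp}(n) \succeq n^{\alpha+1}$.

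The anticipated main obstacle is the identification $C_{\bpqp}(b) = \langle a,b,\th \mid [a,b]=[b,\th]=1 \rangle$ and the subsequent identification $\bar\phi|_{C_{\bpqp}(b)} = \zeta_b$.  Both rest on the Bass--Serre structure of $\bpqp$ together with the observation that $b$ lies in the associated subgroup of the HNN extension by $\th$; this forces $\th$ to centralize $b$ in $\bpqp$ (contributing to $C_{\bpqp}(b)$) while still producing the $z$-discrepancy in $\tbpqp$ that $\zeta_b$ records.
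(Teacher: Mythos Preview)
Your proof is correct. The paper's argument is shorter and more direct: it asserts that $\tbpqp$ retracts onto the Heisenberg subgroup $H=\langle b,\theta,z\rangle$, so that any conjugator of $b$ to $bz^M$ in $\tbpqp$ projects to one in $H$, where the shortest conjugator is easily seen to be $\theta^M$; the further retraction onto $\langle\theta\rangle$ then yields $d_{\tbpqp}(1,\theta^M)=M$. You instead work only with the exponent-sum homomorphism $\phi:\tbpqp\to\Z$ and identify its restriction to $C_{\bpqp}(b)$ with the zeta-map $\zeta_b$ --- this is exactly Proposition~\ref{p:zetas}\eqref{p:zetas3} with $l=1$, so you are reusing machinery the paper develops later for the upper bound. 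Your route has the advantage of sidestepping a delicate point in the paper's argument: the claimed retraction $\tbpqp\to H$ does not in fact exist for general $p>q$, because sending $a,s,t\mapsto 1$ while fixing $b$ is incompatible with the relation $s^{-1}a^qs=a^pb$ (it forces $b=1$). Your argument avoids this by never needing more than the manifestly well-defined map $\phi$ together with the centraliser structure already established in the paper.
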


\begin{proof}
From Remark~\ref{r:all} we know that if $M=\lceil n^{\alpha +1}\rceil$ then $d(1, z^{M}) < 2n(\k+2)$
in $\tbpqp$, so $d(1, bz^{M})\le 2n(\k+2)$.  The unique shortest word conjugating $b$ to $bz^{M}$ in the Heisenberg group $H = \langle b, \theta, z \mid z \text{ central}, \  [b,\theta]= z \rangle$ is $\th^M$.  Since killing $a$, $b$ and $s$ retracts $\tbpqp$ retracts onto $H$, the word $\th^M$ is also the unique shortest conjugator in $\tbpqp$.     
And $d(1, \th^M) = M$  in $\tbpqp$ because killing the other generators retracts $\tbpqp$ onto $\<\th\>$. 
\end{proof}

The proof that  
$\CL_{\tbpqp} (n) \preceq n^{\alpha +1}$ is much more involved and will occupy  the remainder of this article.

\section{Conjugacies of the form $g\sim g z^N$} \label{s: special conjugacies}

Our strategy for showing that $\CL_{\tbpqp} (n) \preceq n^{\alpha +1}$ will be to reduce to an analysis of conjugacies $\gamma\sim\gamma z^N$
in $\tbpqp$.  In this section and the next, we examine the intimate connection between these conjugacies and the structure of 
centralisers in $\bpqp$.  The first part of our discussion applies to arbitrary central extensions
$$
1 \to Z \to \wt{G} \to G\to 1,
$$
with $Z\cong\Z$.  We fix a generator $z$ for $Z$.

\begin{definition} \label{d:zeta}[The maps $\zeta_g$]  Let $g\in G$.
For each element of the centralizer $x\in C_{G}(g)$ 
and all preimages $\tilde{x}, \tilde{g} \in\wt{G}$
we have $\tilde{x}^{-1}\tilde{g}\tilde{x} = \tilde{g} z^{m}$ in $\wt{G}$, where $m\in\Z$ is independent of the
choices of $\tilde{x}$ and $\tilde{g}$ because different choices differ by a power of $z$, which is central.
Define 
$$
\zeta_g : C_G(g) \to \Z
$$
by  $\zeta_g(x) :=m$.
\end{definition}

The following basic facts are easily verified.

\begin{lemma}\label{l:zeta-basic} For all $g,  x\in G$ and $h\in C_G(g)$, 
\begin{enumerate}
\item \label{l:zeta-basic1} $\zeta_g$ is a homomorphism;
\item \label{l:zeta-basic2} $\zeta_g(g)=0$;
\item \label{l:zeta-basic3} if $C_G(g)$ is cyclic then $\zeta_g$ is the zero map;
\item \label{l:zeta-basic4} $\zeta_g(h)=-\zeta_h(g)$;
\item \label{l:zeta-basic5}  $\zeta_{x^{-1}gx}(x^{-1}hx) = \zeta_g(h)$.
\end{enumerate}
\end{lemma}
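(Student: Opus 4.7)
All five assertions are essentially direct manipulations of the defining equation
$$\tilde{h}^{-1}\tilde{g}\tilde{h} \,=\, \tilde{g}\, z^{\zeta_g(h)}$$
in $\wt{G}$, together with the fact that $z$ is central. The plan is to dispatch them in the order stated, since each subsequent item either uses the previous ones or re-uses the same kind of calculation.

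For (\ref{l:zeta-basic1}), given $h_1,h_2\in C_G(g)$, I would pick preimages $\tilde{h}_1,\tilde{h}_2,\tilde{g}$ in $\wt{G}$, note that $\tilde{h}_1\tilde{h}_2$ is a preimage of $h_1h_2$, and compute
$(\tilde{h}_1\tilde{h}_2)^{-1}\tilde{g}(\tilde{h}_1\tilde{h}_2) = \tilde{h}_2^{-1}\bigl(\tilde{g}z^{\zeta_g(h_1)}\bigr)\tilde{h}_2 = \tilde{g}z^{\zeta_g(h_1)+\zeta_g(h_2)}$,
where the centrality of $z$ allows the power of $z$ to slide past $\tilde{h}_2$. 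Item (\ref{l:zeta-basic2}) is immediate from $\tilde{g}^{-1}\tilde{g}\tilde{g}=\tilde{g}$. For (\ref{l:zeta-basic3}), write $C_G(g)=\langle c\rangle$; if $c$ has finite order then $C_G(g)$ is finite and any map to $\Z$ is trivial, while if $c$ has infinite order then $g=c^k$ for some $k\in\Z$. When $k\neq 0$, combining (\ref{l:zeta-basic1}) with (\ref{l:zeta-basic2}) gives $k\,\zeta_g(c)=\zeta_g(g)=0$ in $\Z$, hence $\zeta_g(c)=0$ and $\zeta_g\equiv 0$; the degenerate case $k=0$ (i.e.~$g=1$) is clear because any preimage of $1$ is a power of $z$, which is central.

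For (\ref{l:zeta-basic4}), I would invert the defining equation to get $\tilde{h}^{-1}\tilde{g}^{-1}\tilde{h}=\tilde{g}^{-1}z^{-\zeta_g(h)}$, multiply on the right by $\tilde{g}$ to conclude $\tilde{h}^{-1}\tilde{g}^{-1}\tilde{h}\tilde{g}=z^{-\zeta_g(h)}$ (using centrality of $z$), and then left-multiply by $\tilde{h}$ to obtain $\tilde{g}^{-1}\tilde{h}\tilde{g}=\tilde{h}z^{-\zeta_g(h)}$. This is exactly the defining equation for $\zeta_h(g)$, giving $\zeta_h(g)=-\zeta_g(h)$. Finally, for (\ref{l:zeta-basic5}), the preimages $\tilde{x}^{-1}\tilde{g}\tilde{x}$ and $\tilde{x}^{-1}\tilde{h}\tilde{x}$ of $x^{-1}gx$ and $x^{-1}hx$ satisfy
$(\tilde{x}^{-1}\tilde{h}\tilde{x})^{-1}(\tilde{x}^{-1}\tilde{g}\tilde{x})(\tilde{x}^{-1}\tilde{h}\tilde{x})=\tilde{x}^{-1}(\tilde{h}^{-1}\tilde{g}\tilde{h})\tilde{x}=\tilde{x}^{-1}\tilde{g}\tilde{x}\cdot z^{\zeta_g(h)}$,
where centrality of $z$ is used one last time to pull the $z^{\zeta_g(h)}$ past $\tilde{x}$, yielding the claim.

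There is no serious obstacle here; the only point requiring a hint of care is well-definedness, which the Definition~\ref{d:zeta} already ensures (different preimages differ by central powers of $z$), and the small casework in (\ref{l:zeta-basic3}) to cover the possibility $g=1$ or $c$ of finite order.
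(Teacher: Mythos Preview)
Your proposal is correct; the paper itself does not spell out a proof but simply records that ``the following basic facts are easily verified,'' and your line-by-line verification is exactly the kind of routine check the authors had in mind.
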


 The following lemma is useful when the cyclic subgroup generated by $x_0$
is heavily distorted, because in such a situation,  one can   
 concentrate the word length of conjugators $W=\wt{x}_0^{\lambda}\wt{x}_1^{\mu_1}\dots \wt{x}_r^{\mu_r}$ on the
 syllable $\wt{x}_0^\lambda$ when trying to minimise $d_{\wt{G}}(1,W)$.

\begin{lemma}\label{l:bezout}
If $\wt{g}\in\wt{G}$ has image $g\in G$ and $C_G(g)$ is generated by $\{x_0,\dots,x_r\}$ with $\zeta_g(x_i)=m_i$,
and $m_0\neq 0$, then for all $N\in\Z$ and all preimages $\wt{x}_i \in \wt G$,  if $\wt{g}\sim \wt{g}z^N$ then
\begin{equation}
(\wt{x}_0^{\lambda}\wt{x}_1^{\mu_1}\cdots\wt{x}_r^{\mu_r})^{-1} \  \wt{g}\ (\wt{x}_0^{\lambda}\wt{x}_1^{\mu_1}\cdots
\wt{x}_r^{\mu_r})  =  \wt{g} z^N
\end{equation}
for some $\lambda, \mu_1, \ldots, \mu_r \in \Z$ with  $|\lambda| <  |N/m_0|  + \sum_{i=1}^r |m_i|  \text{ and } |\mu_i| < |m_0|$ for $i=1,\dots,r$.
\end{lemma}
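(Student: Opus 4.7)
The plan has three steps: first, to compute the conjugate $(\wt{x}_0^{\lambda}\wt{x}_1^{\mu_1}\cdots\wt{x}_r^{\mu_r})^{-1}\,\wt{g}\,(\wt{x}_0^{\lambda}\wt{x}_1^{\mu_1}\cdots\wt{x}_r^{\mu_r})$ in closed form; second, to translate the hypothesis $\wt{g}\sim \wt{g} z^N$ into the existence of an integral solution to a linear diophantine equation; and third, to use Euclidean division to replace that solution with one obeying the prescribed bounds.

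For the first step, the definition of $\zeta_g$ yields $\wt{x}_i^{-1}\wt{g}\wt{x}_i = \wt{g} z^{m_i}$, and because $z$ is central in $\wt{G}$ an easy induction on $k$ gives $\wt{x}_i^{-k}\wt{g}\wt{x}_i^{k} = \wt{g} z^{k m_i}$ for every $k\in\Z$.  Peeling the syllables of the conjugator off one at a time, and using centrality of $z$ to slide the accumulating power of $z$ past the surviving factors, produces
\[
(\wt{x}_0^{\lambda}\wt{x}_1^{\mu_1}\cdots\wt{x}_r^{\mu_r})^{-1}\,\wt{g}\,(\wt{x}_0^{\lambda}\wt{x}_1^{\mu_1}\cdots\wt{x}_r^{\mu_r}) \;=\; \wt{g}\, z^{\lambda m_0 + \mu_1 m_1 + \cdots + \mu_r m_r}.
\]
Crucially this calculation uses no commutation among the $x_i$ themselves, only their commutation with $\wt{g}$ modulo $Z$; it is also independent of the choice of preimages $\wt{x}_i$, since any alteration of $\wt{x}_i$ by a central factor cancels with the corresponding alteration of $\wt{x}_i^{-1}$.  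The conjugacy we seek is therefore equivalent to the single arithmetic equation $\lambda m_0 + \mu_1 m_1 + \cdots + \mu_r m_r = N$.

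For the second step, the hypothesis $\wt{g}\sim\wt{g}z^N$ implies, by Definition~\ref{d:zeta}, that $N\in\image(\zeta_g)$.  Since $\zeta_g$ is a homomorphism (Lemma~\ref{l:zeta-basic}(\ref{l:zeta-basic1})) and $C_G(g)=\langle x_0,\dots,x_r\rangle$, there exist integers $a_0,\dots,a_r$ with $N = a_0 m_0 + a_1 m_1 + \cdots + a_r m_r$.  For the third step, because $m_0\neq 0$ one may divide with remainder: for each $i\geq 1$, write $a_i = q_i m_0 + \mu_i$ with $|\mu_i|<|m_0|$, and set $\lambda := a_0 + \sum_{i\geq 1} q_i m_i$.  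Direct substitution then confirms $\lambda m_0 + \sum_{i\geq 1}\mu_i m_i = N$; the bounds $|\mu_i|<|m_0|$ are built into the division, and the triangle inequality applied to $\lambda m_0 = N - \sum_{i\geq 1}\mu_i m_i$ yields $|\lambda| < |N/m_0| + \sum_{i\geq 1}|m_i|$, as required.

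There is no real obstacle here; the lemma is a short arithmetic manipulation and the only care needed is in the first step, where one must verify the conjugation formula without presuming mutual commutation of the generators of $C_G(g)$.  What matters for later sections is the shape of the bound: when $\langle x_0\rangle$ is chosen to be the most distorted cyclic subgroup of $C_G(g)$ in $\wt{G}$, the conjugator's $\wt{G}$-length can be concentrated in the syllable $\wt{x}_0^{\lambda}$ (cheap despite large exponent), while the remaining syllables contribute uniformly bounded exponents $|\mu_i|<|m_0|$.
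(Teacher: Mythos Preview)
Your proof is correct and follows essentially the same approach as the paper: both arguments first obtain integers $a_i$ with $\sum a_i m_i = N$ from $N\in\image\zeta_g$, then use Euclidean division by $m_0$ on $a_1,\dots,a_r$ to produce the small remainders $\mu_i$ and absorb the quotients into $\lambda$, with the bound on $|\lambda|$ coming from the same triangle-inequality estimate on $\lambda m_0 = N - \sum_{i\ge 1}\mu_i m_i$. Your first step makes the conjugation formula a bit more explicit than the paper does, but the argument is otherwise the same.
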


\begin{proof} 
If $\wt{g}\sim \wt{g}z^N$ then $N\in \im \zeta_g$, which is generated (as an additive subgroup of $\Z$) by 
$m_0,\dots,m_r$, so there exist integers $a_i$ such that
$$
a_0 m_0 + \cdots + a_r m_r = N.
$$
For $i=1,\dots,r$ we write $a_i = \eta_i m_0 + \mu_i$ (in integers) with $0\le \mu_i < |m_0|$.  Then,
$$
\left(a_0 + \sum^r_{i=1}\eta_i  m_i \right) \, m_0 + \sum_{i=1}^r \mu_i m_i  = N,
$$
and
$$
\left|a_0 + \sum_{i=1}^r\eta_i  m_i \right| \le \left| N/m_0 \right| + \sum_{i=1}^r \left| \mu_im_i/m_0 \right| < |N/m_0|  + \sum_{i=1}^r |m_i |.
$$
To complete the proof, we define $\lambda = a_0 + \sum^r_{i=1}\eta_i  m_i$ and note that 
conjugation by $\wt{x}_i^j$ sends $\wt{g}$ to $\wt{g}z^{jm_i}$.
\end{proof}

\section{Centralisers and zeta-maps for $\bpqp$} \label{s: centralizer and zeta-maps}
 
\def\t{\tau}

To clarify the salient  points in our discussion of centralisers in $\bpqp$,  we begin with a more general setting.

\begin{lemma}\label{l:HNN-cent} Let $G = H\dot{\ast}_A$ be the HNN extension in which the
stable letter $\t$ commutes with the associated subgroup $A<H$. Assume that $A$ is  abelian
and that $h^{-1}Ah\cap A= \{ 1 \}$ for all  $h\in H\ssm C_H(A)$.   
\begin{enumerate}
\item \label{l:HNN-cent1} If $g\in H$ is not conjugate to an element of  $A$ then $C_G(g)=C_H(g)$.
\item \label{l:HNN-cent2} If $g\in A\ssm \{1\}$ then  $C_G(g)=  \< C_H(A), \, \t\> = C_H(A)\dot{\ast}_A$.
\item \label{l:HNN-cent3} If $g\in  \< C_H(A), \, \t\>$ is not conjugate to an element of $C_H(A)$,  then $C_G(g)  \cong A\times \Z$,
where  $1\times\Z$ is generated by a maximal root of $ga$ for some $a\in A$.
\item \label{l:HNN-cent4} If $g$ is not conjugate to an element of $H\cup \< C_H(A), \, \t\>$,  then $C_G(g)$ is cyclic.
\item \label{l:HNN-cent5} In $G$,  if $x^{-1}Ax\cap A\neq \{ 1 \}$,  then $x\in C_G(A)=  \< C_H(A), \, \t\>$.
\end{enumerate}
\end{lemma}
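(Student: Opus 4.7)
The plan combines Britton's normal form for $G = H \dot{\ast}_A$ with the action of $G$ on its Bass--Serre tree $T$. Write $G_0 := \langle C_H(A), \tau \rangle$ for the subgroup that appears in (ii), (iii) and (v). The consequence of the malnormality hypothesis I would use throughout is: for every $g \in A \setminus \{1\}$, $C_H(g) = C_H(A)$, because $h^{-1}gh = g$ already makes $h^{-1}Ah \cap A \ni g$ non-trivial. Granted (v), the same observation upgrades to $N_G(A) = C_G(A) = G_0$.

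I would prove (v) first. Given $a \in A \setminus \{1\}$ with $a' := x^{-1}ax \in A$, write $x = h_0 \tau^{n_1} h_1 \cdots \tau^{n_k} h_k$ in Britton normal form and apply Britton's lemma to $ax = xa'$. The innermost $\tau$-pair can only cancel through $h_0^{-1}ah_0 \in A$, which by the opening observation forces $h_0 \in C_H(A)$; peeling off $h_0$ and $\tau^{\pm n_1}$ and inducting on $k$ gives $h_i \in C_H(A)$ for every $i$, so $x \in G_0$. The reverse containment $G_0 \subseteq C_G(A)$ is immediate. Part (i) uses the same argument applied to $xgx^{-1}g^{-1} = 1$ with $g \in H$: any collapse would place $g$ in an $H$-conjugate of $A$, contradicting the hypothesis, so the normal form of $x$ has length zero. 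Part (ii) follows from (v) because $x \in C_G(g)$ with $g \in A \setminus \{1\}$ satisfies $x^{-1}Ax \cap A \ni g \neq 1$.

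For (iii) and (iv), $g$ acts hyperbolically on $T$ (not being conjugate into $H$), with axis $\ell_g$. Any $x \in C_G(g)$ preserves $\ell_g$ and its orientation, giving a homomorphism $\phi : C_G(g) \to \Z$ whose kernel $K$ is the pointwise stabilizer of $\ell_g$. The central dichotomy is: either $K = \{1\}$, or all edge stabilizers along $\ell_g$ coincide as a single conjugate $yAy^{-1}$ of $A$. Indeed, a non-trivial element of $K$ lies in the intersection of two consecutive edge stabilizers at some common vertex $yv_0$, and the malnormality hypothesis in $H$ forces these two stabilizers to be equal; iterating along $\ell_g$ gives the claim. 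Moreover, when $K \neq \{1\}$, $g$ normalizes this common stabilizer, so $g \in N_G(yAy^{-1}) = yG_0 y^{-1}$, making $g$ conjugate into $G_0$. Thus the hypothesis of (iv) rules out $K \neq \{1\}$, yielding $K = \{1\}$ and $C_G(g)$ cyclic. For (iii), $g \in G_0$ admits a Britton form with every $h_i \in C_H(A)$, and a direct trace along $\ell_g$ using $h_i A h_i^{-1} = A$ and $\tau A \tau^{-1} = A$ shows that every edge of $\ell_g$ has stabilizer $h_0 A h_0^{-1} = A$, so $K = A$. Then $C_G(g) \subseteq N_G(A) = G_0$, inside which $A$ is central, so the extension $1 \to A \to C_G(g) \to \phi(C_G(g)) \to 1$ splits as $A \times \Z$. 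The minimal $\phi$-positive generator $r$ satisfies $r^n = g a^{-1}$ for some $a \in A$, identifying $r$ as a maximal root of $ga^{-1}$.

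The main obstacle is the bookkeeping in (iii): verifying that every edge stabilizer along $\ell_g$ is literally the same copy of $A$ (not merely pairwise conjugate), and hence that $C_G(g)$ is trapped inside $N_G(A) = G_0$. Once this geometric observation is in place, the direct-product decomposition and the identification of the maximal-root generator are formalities.
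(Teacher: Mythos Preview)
Your proposal is correct and follows essentially the same route as the paper: Britton normal form with induction on HNN length for (v) and (ii), and the action on the Bass--Serre tree with the translation homomorphism on the axis for the hyperbolic cases (iii) and (iv). The only organisational differences are that you prove (v) first and handle (i) via Britton's lemma (the paper uses the tree for (i)), and that for (iii) you verify $K=A$ by directly tracing edge stabilisers along the axis rather than invoking (v) twice as the paper does; neither of these is a genuinely different argument.
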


\begin{proof}
Consider the action of $G$ on the Bass-Serre tree $T$ of the splitting $G = H\dot{\ast}_A$. Items \eqref{l:HNN-cent1} and \eqref{l:HNN-cent2}
cover the elliptic elements of this splitting (up to conjugacy).  In case \eqref{l:HNN-cent1},  the fixed point set of $g$ is the single
vertex $H\in G/H={\text{Vert}}(T)$, so the centraliser of $g$ must lie in the stabiliser of this vertex, which is $H$.  

Case \eqref{l:HNN-cent2}, covers edge stabilisers, which are the conjugates of $A$.  In this case we argue algebraically.
It is clear that if $x\in \< C_H(A), \, t\>$ then $[g,x]=1$, so we will be done if we can prove the
{\em claim} that $x^{-1}gx\in A$
only if $x\in \< C_H(A), \, \t\>$.  This claim will also establish item \eqref{l:HNN-cent5}. 

We prove the claim by induction on the HNN length of $x$,  
which is the number of stable letters $\t^{\pm 1}$ present when
we  write $x$ in reduced HNN form
$$x = h_0 \t^{e_1} h_1\t^{e_2}\cdots h_{m-1} \t^{e_m} h_m,$$
with $h_i\in H\ssm A$ for $0<i<m$ and all $e_i\neq 0$.   The induction begins with the case $m=0$, which 
is covered by the hypothesis that $h^{-1}Ah\cap A =1$ if $h\in H\ssm C_H(A)$.  
For the inductive step,  we consider the cancellation that brings $x^{-1}gx$ into reduced form.
If  the middle term $h_0^{-1} g h_0$ in the naive product does not represent an element
of $A$, then $x^{-1}gx$ is already in reduced form and it is not an element of $A$.
 If $h_0^{-1} g h_0\in A$, then $h_0^{-1}Ah_0\cap A\neq 1$,
so $h_0\in C_H(A)$, by hypothesis.  In this case,   $x^{-1}gx = x_0^{-1}gx_0$
where $x_0:=h_1 \t^{e_2}\dots h_{m-1} \t^{e_m} h_m$  is covered by the inductive hypothesis. This completes the
proof of \eqref{l:HNN-cent2} and \eqref{l:HNN-cent5}.

Items \eqref{l:HNN-cent3} and \eqref{l:HNN-cent4} cover hyperbolic isometries of $T$.  If $g\in G$ acts as a 
non-trivial hyperbolic isometry, then it has 
a unique axis of translation and its centraliser $C_G(g)$ must preserve this axis, acting
 by translations on it. Case \eqref{l:HNN-cent4} is the case where the action of $C_G(g)$ is faithful, which forces  $C_G(g)$ to be
 cyclic. Case \eqref{l:HNN-cent3} is where the action of  $C_G(g)$ has a non-trivial
 kernel: the kernel fixes every edge in the axis so, 
 after conjugating, we may assume that it is contained in $A$. Then \eqref{l:HNN-cent5} implies that $g$ centralises $A$,
 so the kernel is the whole of $A$.  A second application of (v) tells us that since $A$ is normal in $C_G(g)$, it
 must be central.  Thus we have a central extension $1\to A\to C_G(g)\to \Z\to 1$, whence
 $C_G(g) \cong A\times \Z$ where the second factor is generated by an element that acts as a translation of
 minimal displacement on the axis, which will be an $m$-th root of $ga$ for some $a\in A$ with $m$ maximal.
\end{proof}

The following observation concerning our presentations (\ref{e:bpqp}) for $\bpqp$ and (\ref{e:tbpqp}) for $\tbpqp$ will be important for the proposition that follows.

\begin{lemma}\label{l:lift}
For all words $u,v\in F(a,b,s,t)$, if $u=v$ in $\bpqp$ then $u=v$ in $\tbpqp$.
\end{lemma}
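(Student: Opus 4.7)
The plan is to factor the identification through the ``base'' group $\bpq$ by exploiting the fact that $\bpqp$ and $\tbpqp$ are both built on top of $\bpq$ in compatible ways. First I would verify that there is a retraction
\[
\pi : \bpqp \twoheadrightarrow \bpq, \qquad a\mapsto a,\ b\mapsto b,\ s\mapsto s,\ t\mapsto t,\ \th\mapsto 1.
\]
This is a well-defined homomorphism because the presentation \eqref{e:bpqp} of $\bpqp$ specialises, under $\th\mapsto 1$, to a set of relations consisting of the defining relations of $\bpq$ together with $[b,1]=1$, which is trivial in $\bpq$.

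Next I would verify that there is a homomorphism
\[
\iota : \bpq \to \tbpqp, \qquad a\mapsto a,\ b\mapsto b,\ s\mapsto s,\ t\mapsto t.
\]
This requires checking that the three defining relations of $\bpq$, namely $[a,b]=1$, $s^{-1}a^qs=a^pb$, and $t^{-1}a^qt=a^pb^{-1}$, are all satisfied by the images in $\tbpqp$. But these three relations appear verbatim among the defining relations \eqref{e:tbpqp} of $\tbpqp$, so the check is immediate.

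With $\pi$ and $\iota$ in hand, the proof is essentially a one-line chase. Given $u,v\in F(a,b,s,t)$ with $u=v$ in $\bpqp$, applying $\pi$ gives $\pi(u)=\pi(v)$ in $\bpq$; but since neither $u$ nor $v$ involves the letter $\th$, we have $\pi(u)\equiv u$ and $\pi(v)\equiv v$ as words, so $u=v$ in $\bpq$.  Then applying $\iota$ gives $u=v$ in $\tbpqp$. There is no real obstacle here beyond the bookkeeping of relations; the statement is really an observation that $\bpqp$ and $\tbpqp$ share a common retract $\bpq$ which absorbs any equality of words not involving $\th$ or $z$.
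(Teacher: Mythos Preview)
Your proof is correct; the paper states this lemma as an observation without proof, and your argument---factoring through the common retract $\bpq$ to sidestep the incompatibility between $[b,\th]=1$ in $\bpqp$ and $[b,\th]=z$ in $\tbpqp$---is exactly the reasoning the authors have in mind.
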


Lemma \ref{l:zeta-basic}\eqref{l:zeta-basic5} assures us that the following description of the zeta maps for $\bpqp$ is exhaustive.

\begin{proposition}\label{p:zetas} In $\bpqp$, we have 
$C_{\bpqp}(b) = \<b\> \times F$, where $F=\<a,\th\>$ is free of rank $2$.  Furthermore, 
\begin{enumerate}
\item \label{p:zetas2} if no conjugate of $\gamma$ lies in $C_{\bpqp}(b)$, then
$\zeta_\gamma : C_{\bpqp}(\gamma)\to \Z$ is the zero map;
\item \label{p:zetas3} if $\gamma = b^l$ for some non-zero $l \in \Z$,   then $C_{\bpqp}(\gamma) =\<b\> \times F \to \Z$  and the image of $\zeta_\g$ is generated by $\zeta_\g(\th ) = l$;
\item \label{p:zetas4} If $\gamma = b^l \omega$ with     $\omega\in F\ssm\{1\}$, then  $C_{\bpqp}(\gamma)
=  \<b\> \times \< \omega_0 \>$, where $\omega_0$ is a maximal root of $\omega$ in $F= \< a , \theta \>$, and 
the image of $\zeta_\g$  is
generated by 
$\zeta_\gamma(b) = -j$ and  $\zeta_\g(\omega_0 )= lj_0$, where  $j$ (resp.~$j_0$)
is the exponent sum of $\th$ in $\omega$ (resp.~$\omega_0$).
\end{enumerate}   
\end{proposition}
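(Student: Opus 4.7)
The plan is to derive the centraliser descriptions from Lemma~\ref{l:HNN-cent} applied to the HNN decomposition $\bpqp = \bpq \, \dot{\ast}_{\<b\>}$ whose stable letter is $\th$, and then to compute each zeta-map directly from the defining relation $[b,\th]=z$ in $\tbpqp$.

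Step 1 (verifying the hypotheses of Lemma~\ref{l:HNN-cent}). We take $H=\bpq$, $A=\<b\>$, $\t=\th$.  Since $A$ is cyclic it is abelian.  Considering the action of $\bpq$ on the Bass--Serre tree $T$ of its graph-of-groups splitting $\<a,b\>\,\dot\ast_{\<a^q\>}\,\dot\ast_{\<a^q\>}$, the element $b$ fixes exactly the vertex $\A=\<a,b\>$:  it does not lie in $\<a^q\>$, $\<a^pb\>$, or $\<a^pb^{-1}\>$ (the projection to $\<a,b\>/\<b\>\cong\Z$ kills $b$ but is non-zero on the generators of each edge group), and the same holds for $b^n$ with $n\neq 0$.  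Thus if $h^{-1}b^m h=b^n$ with $m,n\neq 0$, the conjugator $h$ sends $\mathrm{Fix}(b^n)=\{\A\}$ to $\mathrm{Fix}(b^m)=\{\A\}$, so $h\in\A$.  Taking $m=n$ yields $C_{\bpq}(b)=\A$, and taking $m\neq n$ yields the malnormality-type condition $h^{-1}\<b\>h\cap\<b\>=\{1\}$ for $h\notin\A$.

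Step 2 (centraliser structures).  Lemma~\ref{l:HNN-cent}\eqref{l:HNN-cent2} now gives $C_{\bpqp}(b)=\<C_{\bpq}(b),\th\>=\<a,b,\th\>$.  Since $b$ is central in $\<a,b,\th\>$ (as $[a,b]=[b,\th]=1$ in $\bpqp$), this splits as $\<b\>\times F$ with $F=\<a,\th\>$; Britton's lemma applied to the HNN structure shows $F$ is free of rank~$2$, because in any reduced word in $a,\th$ no $\th^{\pm 1}$ can be unfolded through a power of $a$ (as $\<a\>\cap\<b\>=\{1\}$).  For case (3) this gives $C_{\bpqp}(b^l)=C_{\bpqp}(b)=\<b\>\times F$ when $l\neq 0$.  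For case (4), $\g=b^l\omega$ with $\omega\in F\ssm\{1\}$ lies in $C_{\bpqp}(b)$, so Lemma~\ref{l:HNN-cent}\eqref{l:HNN-cent3} applies, provided $\g$ is not conjugate into $\A$; the degenerate possibility $\omega\in\<a\>$ is handled separately via Lemma~\ref{l:HNN-cent}\eqref{l:HNN-cent1} and yields the same formula.  The maximal root of $\g b^{-l}=\omega$ in $\<b\>\times F$ must lie in $F$ (since $b^m(\omega')^k=\omega$ forces $m=0$), giving $\omega_0$, so $C_{\bpqp}(\g)=\<b\>\times\<\omega_0\>$.  For case (2), $\g$ has no conjugate in $C_{\bpqp}(b)$, so Lemma~\ref{l:HNN-cent}\eqref{l:HNN-cent1}\eqref{l:HNN-cent4} places $C_{\bpqp}(\g)$ either inside $\bpq$ or else makes it cyclic.

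Step 3 (computing zeta-maps).  The fundamental identities in $\tbpqp$ derived from $[b,\th]=z$ (with $z$ central) are $\th^{-1}b\th=bz$ and $b^{-1}\th b=\th z^{-1}$, while $[a,b]=1$ holds unchanged.  For case (2), if $C_{\bpqp}(\g)$ is cyclic then $\zeta_\g=0$ by Lemma~\ref{l:zeta-basic}\eqref{l:zeta-basic3}; otherwise $C_{\bpqp}(\g)=C_{\bpq}(\g)\subset\bpq$ and any centralising identity $x\g=\g x$ is a word equality in $F(a,b,s,t)$ that lifts verbatim to $\tbpqp$ by Lemma~\ref{l:lift}, giving $\zeta_\g(x)=0$.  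For case (3), $\zeta_{b^l}(b)=0$ by Lemma~\ref{l:zeta-basic}\eqref{l:zeta-basic2}, $\zeta_{b^l}(a)=0$ by the same lifting argument, and $\th^{-1}b^l\th=(bz)^l=b^lz^l$ delivers $\zeta_{b^l}(\th)=l$.  For case (4), a letter-by-letter induction using $\th^{-1}b\th=bz$ and $[a,b]=1$ gives $\omega_0^{-1}b\omega_0=bz^{j_0}$ (each $\th^{\pm 1}$ in $\omega_0$ contributes $\pm 1$ to the exponent of $z$, each $a^{\pm 1}$ contributes $0$), so
$$
\omega_0^{-1}\g\omega_0=(\omega_0^{-1}b^l\omega_0)\,(\omega_0^{-1}\omega\omega_0)=b^lz^{lj_0}\cdot\omega=\g z^{lj_0},
$$
using that $\omega=\omega_0^k$ commutes with $\omega_0$ in any group; hence $\zeta_\g(\omega_0)=lj_0$.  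The parallel computation $b^{-1}\omega b=\omega z^{-j}$ (from $b^{-1}\th b=\th z^{-1}$) yields $b^{-1}\g b=\g z^{-j}$, so $\zeta_\g(b)=-j$.

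The principal obstacle is the Bass--Serre analysis in Step~1, specifically ruling out any power of $b$ lying in a conjugate of an edge subgroup of $\bpq$; once this malnormality is confirmed, Lemma~\ref{l:HNN-cent} does the heavy lifting and the zeta-maps reduce to direct computations in $\tbpqp$.
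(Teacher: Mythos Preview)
Your proposal is correct and follows essentially the same route as the paper: both verify the hypotheses of Lemma~\ref{l:HNN-cent} via the Bass--Serre tree for $\bpq$, read off the centraliser structures from that lemma, and then compute the zeta-maps from the relation $[b,\th]=z$ together with Lemma~\ref{l:lift}. The only cosmetic differences are that in case~\eqref{p:zetas4} the paper reduces $\zeta_\gamma(\omega_0)$ to $\zeta_{b^l}(\omega_0)$ (using that $\omega_0$ commutes with $\omega$) rather than computing directly as you do, and the paper does not isolate the degenerate sub-case $\omega\in\<a\>$ that you flag.
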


\begin{proof}
We will apply the analysis of Lemma~\ref{l:HNN-cent} with $G=\bpqp,\, H=\bpq,\ A=\<b\>$ and $\t=\th$.  This is valid
because $x \in B_{pq}$ satisfies  $x^{-1}\<b\>x\cap \<b\> \neq \{ 1 \}$ if and only if $x\in \<a,b\> = C_{\bpq}(b)$.  To see this,  observe that
$b$ belongs to the vertex group $\Z^2=\<a,b\>$ of the 2-edge splitting that defines $\bpq$ and that
$\<b\>$ intersects the edge groups $\<a^q\>, \, \<a^pb\>, \, \<a^pb^{-1}\>$ trivially; it follows that the fixed point set of 
$b$ in the Bass-Serre tree of the splitting is the unique vertex with stabiliser $\<a,b\>$, and $C_{\bpq}(b)$
has to preserve this fixed point.  That  $C_{\bpqp}(b) = \<b\> \times F$ is now a special case of  Lemma~\ref{l:HNN-cent}\eqref{l:HNN-cent2}. 

For \eqref{p:zetas2},  note that if
 $\gamma$ is not conjugate to any element of $C_{\bpqp}(b)$,   then either Lemma~\ref{l:HNN-cent}\eqref{l:HNN-cent4} applies
and  $C_{\bpqp}(\g)$ is cyclic, or else  $\g$ is conjugate to 
an element $\gamma'\in \bpq\ssm \<b\>$. In the former case, Lemma~\ref{l:zeta-basic}\eqref{l:zeta-basic3} applies and tells us that $\zeta_{\gamma}$ is the zero map.   
In the latter case,    $C_{\bpqp}(\g') < \bpq$, 
by Lemma~\ref{l:HNN-cent}\eqref{l:HNN-cent1},   and Lemma~\ref{l:lift} tells us that in this case $\zeta_{\gamma'}$ is the zero map,
and hence so is $\zeta_{\gamma}$,   by Lemma~\ref{l:zeta-basic}\eqref{l:zeta-basic5}.

For \eqref{p:zetas3},  the case  $\gamma = b^l$,  we have
 $C_{\bpqp}(\g) = \<a, b, \th\>$ and $\zeta_\gamma(a)=\zeta_\gamma(b)=0$
while $\zeta_\gamma(\th) = l$.  

For \eqref{p:zetas4},  the case $\gamma=b^l \omega$,  Lemma~\ref{l:HNN-cent}\eqref{l:HNN-cent3}  tells us that
$C_{\bpqp}(\g) = \<b\> \times \<\omega_0\>$  where $\omega_0$ is a maximal root of $\omega$ in $F=\<a,\th\>$. 
As $\omega_0$ remains a root of $\omega$ in $\tbpqp$,  it commutes with any preimage of $\omega$ and
hence $\zeta_\gamma(\omega_0) = \zeta_{b^l}(\omega_0) = lj_0$, where $j_0$ is the exponent sum of $\th$ in 
$\omega_0$.  (Here we have used the fact that $ \zeta_{b^l}(a)=0$ and  $\zeta_{b^l}(\th)=l$.)
Similarly,   $\zeta_\gamma(b) = \zeta_{\omega}(b) =- j$.
\end{proof}

\section{Preferred representatives of conjugacy classes in $\bpqp$} \label{sec: preferred reps}

 \begin{proposition} \label{p:preferred new}  
If the conjugacy class $[\gamma]$
of $\gamma \in\bpqp$ intersects $C_{\bpqp}(b)$, then there exists a word $u_0 \in F(a, b, s,  t, \th)$ that represents an element of $[\gamma]\cap C_{\bpqp}(b)$ and  satisfies   
 \begin{equation}\label{e:length of u_0 new}
|u_0| \le \ d_{\bpqp} (1, \g).
\end{equation}
 \end{proposition}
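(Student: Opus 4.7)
The plan is to build $u_0$ from a geodesic word $w$ for $\gamma$ (so $|w|=d_{\bpqp}(1,\gamma)$) by iterated cyclic reduction in the HNN splitting $\bpqp=\bpq\,\dot{\ast}_{\<b\>}$ with stable letter $\th$, restricted to length-non-increasing operations: cyclic permutations, free reductions, and the Britton-type rewrite $\th^a v \th^b \to v\,\th^{a+b}$ (valid when $v$ represents an element of $\<b\>$ in $\bpq$ and $a\cdot b<0$, using that $\th$ centralises $\<b\>$). These yield a word $\hat w$ of length at most $|w|$ representing a conjugate $\hat\gamma$ of $\gamma$, so the bound $|u_0|\le d_{\bpqp}(1,\gamma)$ is automatic once I confirm that my final $u_0$ represents an element of $C_{\bpqp}(b)$.

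If $\hat w$ retains $\th$-letters, then $\hat\gamma$ is hyperbolic in the $\th$-Bass--Serre tree. The hypothesis supplies an element $b^l\omega\in[\gamma]\cap C_{\bpqp}(b)$ whose $F$-part $\omega\in\<a,\th\>$ involves $\th$-letters, and Collins' conjugacy theorem for HNN extensions then matches the syllables of $\hat w$ (read between consecutive $\th^{\pm 1}$'s), viewed as elements of $\bpq$, against those of the Britton form of $b^l\omega$---which are powers of $a$ with the factor $b^l$ attached to a single syllable---modulo cyclic permutation and $\<b\>$-conjugation of successive syllables. Because $\<b\>$ centralises $\<a,b\>$, the $\<b\>$-conjugation acts trivially on such syllables, so each syllable of $\hat w$ represents an element of $\<a,b\>$. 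Hence $\hat\gamma\in\<a,b,\th\>=C_{\bpqp}(b)$, and I take $u_0=\hat w$.

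The complementary case is that $\hat w$ has no $\th$-letters, so $\hat\gamma\in\bpq$ is conjugate in $\bpq$ to some $b^l a^k\in\<a,b\>$. I would then reduce to an analogous statement inside $\bpq$: find $u_0\in F(a,b,s,t)$ of length at most $|\hat w|$ representing an element of $[\hat\gamma]_{\bpq}\cap\<a,b\>$, which would give what we need (since $F(a,b,s,t)\subset F(a,b,s,t,\th)$ and $\<a,b\>\subset C_{\bpqp}(b)$). I plan to prove this analogue by the same cyclic-reduction strategy in $\bpq$'s multi-HNN with stable letters $s,t$. The principal obstacle lies precisely here: because $\<a,b\>$ is polynomially distorted in $\bpq$ (Proposition~\ref{p:distort}), a geodesic for an element of $\<a,b\>$ typically does contain $s$- or $t$-letters, and length-non-increasing cyclic reduction need not remove them. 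My anticipated resolution combines Collins' theorem for multi-HNN extensions with the skewness of the amalgamated cyclic subgroups $\<a^q\>,\<a^p b\>,\<a^p b^{-1}\>$ (as exploited in Theorem~\ref{t:cl2}): together these force the element represented by the length-cyclically-reduced word---possibly after one further cyclic permutation that realigns any residual wrapping by stable letters---to lie in $\<a,b\>$ itself, yielding the desired $u_0$.
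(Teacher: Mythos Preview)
Your first case (the $\theta$-hyperbolic case) is essentially correct: once $\hat w$ is cyclically Britton-reduced for the splitting $\bpqp=\bpq\,\dot\ast_{\<b\>}$ and still carries $\theta$-letters, Collins' lemma does force its $\bpq$-syllables to lie in $\<a,b\>$, so $\hat\gamma\in\<a,b,\th\>$. The genuine gap is in your second case, and your anticipated resolution does not close it. The difficulty is not merely that geodesics for elements of $\<a,b\>$ contain $s,t$-letters; it is that in $\bpq$ the stable letters $s,t$ do \emph{not} centralise their associated subgroups, so there is no analogue of your length-non-increasing rewrite $\theta^a v\theta^b\to v\theta^{a+b}$. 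Your remaining operations (cyclic permutation, free reduction) therefore do not produce a Britton-cyclically-reduced word, and Collins' theorem cannot be applied to the output. The skewness of the edge groups, which in Theorem~\ref{t:cl2} is used to bound slab heights in annular diagrams, plays no evident role in producing a short representative of $[\hat\gamma]\cap\<a,b\>$; invoking it here is a red herring.

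The paper's proof bypasses the whole case split with a single diagrammatic move that would also repair your second case. Take $u$ a shortest word over all of $[\gamma]$, and any word $v$ in $a,b,\th$ representing an element of $[\gamma]\cap C_{\bpqp}(b)$. In an annular diagram for $u\sim v$, every $s$- and $t$-corridor has both ends on the $u$-boundary (since $v$ is $s,t$-free). The minimal annular subdiagram containing the outer cycle and all these corridors has an inner boundary labelled by a word $U$ in $a,b,\th$ alone, and is pinched at the end-vertex $\ast$ of any corridor; cut there, it is a disc diagram showing that the cyclic permutation $u_0$ of $u$ read from $\ast$ equals $U$ in $\bpqp$, whence $u_0\in C_{\bpqp}(b)$ with $|u_0|=|u|\le d_{\bpqp}(1,\gamma)$. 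Run inside $\bpq$ with $\<a,b\>$ in place of $C_{\bpqp}(b)$, this same argument is exactly what your second case needs.
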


\begin{proof}
	Proposition~\ref{p:zetas} tells us that $C_{\bpqp}(b)$ is $\<b\> \times F(a,\th)$.  So, given our hypotheses, $\gamma \sim v$ in $\bpqp$ for some word $v$ on $a, b, \theta$.   
		Consider a word $u\in F(a,b,s,t,\th)$  of minimal length among all words representing  elements of $[\gamma]$.  It need not be the case that $u$ represents an element of $C_{\bpqp}(b)$,  but we will argue that some cyclic conjugate $u_0$ of $u$ does. Then we will have $|u_0| = |u| \le  \ d_{\bpqp} (1, \g)$, as required. 

 \begin{figure}[ht]
\begin{overpic}
{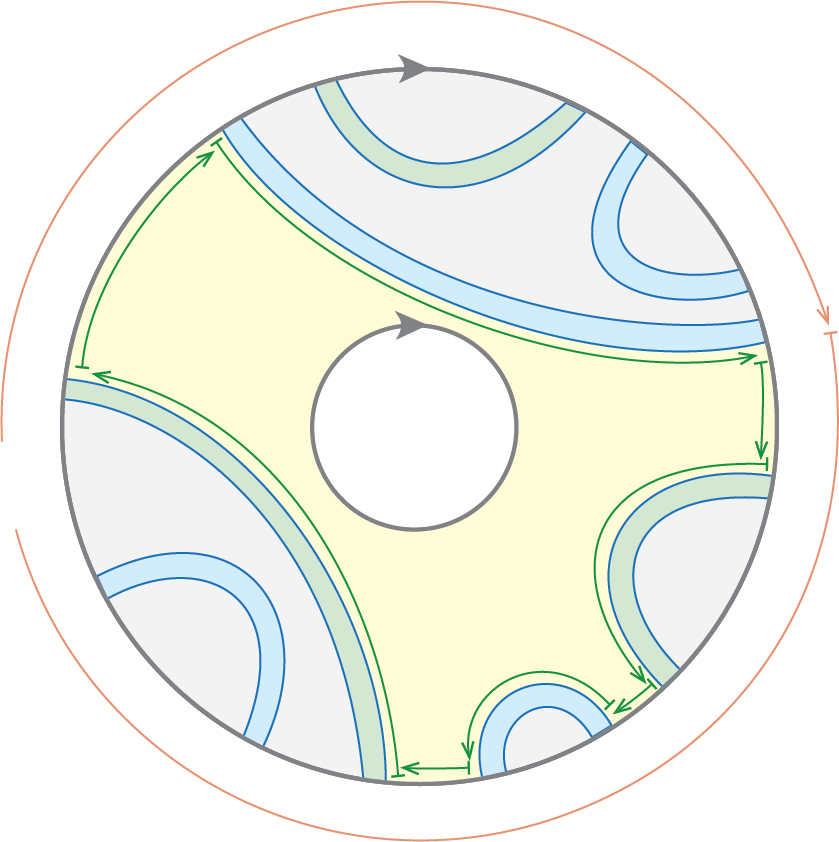}
 \put(200,180){\small{$\Delta$}}    
 \put(96,192){\small{$u$}}    
 \put(-1,84){\small{$u_0$}}    
 \put(187,118){\small{$\ast$}}    
 \put(174,102){\small{$u_1$}}    
  \put(142,39){\small{$u_2$}}      
 \put(100,23){\small{$u_3$}}     
 \put(32,136){\small{$u_4$}}     
  \put(137, 80){\small{$v$}}    
 \put(137, 80){\small{$v_1$}}    
  \put(118,45){\small{$v_2$}}      
 \put(86.5,60){\small{$v_3$}}     
 \put(72,134){\small{$v_4$}}     
 \put(98,103){\small{$v$}}
\end{overpic}
 \caption{Illustrating the   proof of Proposition~\ref{p:preferred new}}
  \label{fig:annuli}
\end{figure}

Let $\Delta$ be an annular diagram  portraying the conjugacy from $u$ to $v$, as illustrated in Figure~\ref{fig:annuli}.  Any $s$- or $t$-corridor in $\Delta$ must have both of its ends on the boundary component labelled $u$ because there are no $s$- or $t$-letters in $v$.  (There may also be $s$- and $t$-annuli in $\Delta$.)  Let $\Delta_0$ be the minimal annular subdiagram  of $\Delta$ that has the same outer cycle as $\Delta$ and contains all of the $s$- and $t$-corridors.  The word around the inner boundary cycle, read from the terminal vertex $\ast$ of the end of one of the $s$- or $t$-corridors, will be a concatenation
$$
U= u_1v_1\cdots u_mv_m
$$
in which each $u_i$ is a subword of $u$  (so contains only letters $a$, $b$ and $\th$)
  and each $v_i$  labels  the side of an $s$- or $t$-corridor (and so contains only letters $a$ and $b$).  Thus $U$ is a word in $a$, $b$ and $\th$,  and so represents an element of $C_{\bpqp}(b)$. And $\Delta_0$ demonstrates that the cyclic conjugate $u_0$ of $u$ read from $\ast$ equals $U$ in $\bpqp$, as required.  
\end{proof}

\section{Estimating the word metric in $\bpqp$} \label{s:word metric in bpqp} 

 Each element of $C_{\bpqp}(b) =  \<b\> \times \<a,\th\> $ is represented by a unique word of the form
\begin{equation}\label{e:deco}
\sigma \equiv b^l  a^{n_0}\th^{e_1}a^{n_1} \cdots \th^{e_r}a^{n_r}
\end{equation}   
where for all $i$ we have $e_i \in \{-1, 1\}$ and $l, n_i \in\Z$ with $n_i\neq 0$ if $e_i=-e_{i+1}$.
Proposition~\ref{p:distort} gave us an understanding of lengths of elements of $\langle a, b \rangle$ in $\bpq$.   Given that the distances between elements of $\langle a, b \rangle$ in $\bpq$ and in $\bpqp$ are equal (because killing $\theta$ retracts $\bpqp$ onto $\bpq$), the following proposition, together with a reverse bound provided by the triangle inequality,  promotes this  to an understanding of the lengths of elements of $ C_{\bpqp}(b)=\<a,b,\th\>$ in $\bpqp$.

\begin{proposition} \label{p: length of H in bpqp}
There exists $C \geq 1$ such that for all   $\sigma$   per \eqref{e:deco}, 
\begin{equation}\label{e:deco2}
 d_{\bpq}(1,b^l) + r + \sum_{i=0}^r   d_{\bpq}(1,a^{n_i})     \leq  C d_{\bpqp}(1, \sigma). 
 \end{equation}   
\end{proposition}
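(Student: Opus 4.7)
The plan is to analyse a geodesic representative of $\sigma$ in $\bpqp$ through the HNN decomposition $\bpqp = \bpq\, \dot{\ast}_{\<b\>}$ with stable letter $\th$, whose associated subgroup $\<b\>$ is pointwise fixed by conjugation by $\th$ (since $[b,\th]=1$).  Let $\omega$ be a word of length $d_{\bpqp}(1,\sigma)$ representing $\sigma$.  Geodesicity forces $\omega$ to be in reduced HNN form
\[
\omega \;=\; \omega_0\, \th^{f_1}\, \omega_1\, \th^{f_2} \cdots \th^{f_s}\, \omega_s,
\]
with each $\omega_j$ a (possibly empty) word in $a,b,s,t$: any pinch $\th^{f} w \th^{-f}$ in which $w$ represents an element of $\<b\>$ in $\bpq$ could be removed, saving two letters and contradicting geodesicity.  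The normal form \eqref{e:deco} for $\sigma$ is also reduced, because $\<a\>\cap\<b\>=\{1\}$ in $\A$ and the condition $n_i\ne 0$ when $e_i=-e_{i+1}$ ensures that $a^{n_i}$ never lies in $\<b\>$ at a pinch position.

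By the uniqueness of reduced HNN forms up to sliding elements of the associated subgroup $\<b\>$ across stable letters (a move that is trivial here because $\th$ centralises $\<b\>$), we get $s=r$, $f_i=e_i$, and there exist integers $m_1,\ldots,m_r\in\Z$ such that, as elements of $\bpq$,
\[
\omega_0 = a^{n_0} b^{l+m_1}, \quad \omega_i = a^{n_i} b^{m_{i+1}-m_i}\ (1\le i\le r-1), \quad \omega_r = a^{n_r} b^{-m_r}.
\]
Since $|\omega| = r + \sum_{i=0}^r |\omega_i|$, this already yields $r \le |\omega|$.  Because the $a$-coordinate of each $\omega_i$ in $\A\cong\Z^2$ is $n_i$, Proposition~\ref{p:distort} gives
\[
|\omega_i| \;\ge\; d_{\bpq}(1,\omega_i) \;\ge\; \kappa^{-1}\, d_\A(1,\omega_i)^{1/\alpha} \;\ge\; \kappa^{-1} |n_i|^{1/\alpha}.
\]
Conversely $d_{\bpq}(1,a^{n_i})\le \kappa |n_i|^{1/\alpha}\le \kappa^2 |\omega_i|$, so summing yields $\sum_{i=0}^r d_{\bpq}(1,a^{n_i}) \le \kappa^2|\omega|$.

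To bound $d_{\bpq}(1,b^l)$, I would invoke the retraction $\rho:\bpqp\to\bpq$ defined by killing $\th$, which is well-defined because $[b,\th]=1$ is the only defining relation involving $\th$.  The image $\rho(\omega)$ is a word of length at most $|\omega|$ in the $\bpq$-generators representing $\rho(\sigma) = b^l a^{N}$ with $N=\sum_i n_i$, so another application of Proposition~\ref{p:distort} gives $(|l|+|N|)^{1/\alpha}\le \kappa|\omega|$, whence $d_{\bpq}(1,b^l)\le \kappa|l|^{1/\alpha}\le \kappa^2|\omega|$.  Combining the three estimates yields the required inequality with $C = 2\kappa^2 + 1$.

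The only non-routine input is the normal-form theorem for HNN extensions, used to derive the sliding formulas for the $\omega_i$; this is the main technical point of the argument, though it becomes transparent once one notes that conjugation by $\th$ is trivial on the associated subgroup $\<b\>$.  Everything else is a direct application of Proposition~\ref{p:distort} combined with the retraction $\rho$.
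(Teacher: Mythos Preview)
Your proof is correct and follows essentially the same approach as the paper: both decompose a geodesic for $\sigma$ according to its $\theta$-letters (you via the HNN normal-form theorem, the paper via $\theta$-corridors in a van~Kampen diagram), identify each intervening piece as representing $a^{n_i}b^{l_i}$ in $\bpq$, and then apply Proposition~\ref{p:distort}. Your bound for $d_{\bpq}(1,b^l)$ via the retraction $\rho$ is slightly more direct than the paper's route, which instead uses $l=\sum_i l_i$, the triangle inequality, and a concavity estimate to control $\sum_i d_{\bpq}(1,b^{l_i})$.
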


 \begin{figure}[ht]
\begin{overpic}
{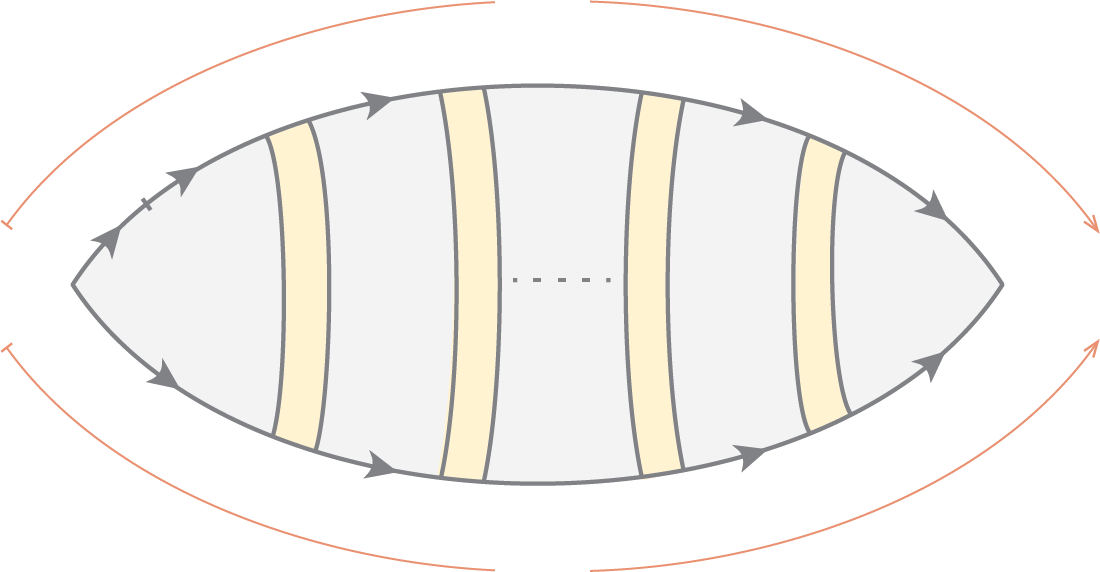}
 \put(127,-2){\small{$w$}}        
 \put(27,40){\small{$w_0$}}    
 \put(84,16){\small{$w_1$}}    
 \put(179,18){\small{$w_{r-1}$}}     
 \put(230,43){\small{$w_r$}}     
 \put(63,110){\small{$\theta^{e_1}$}}    
 \put(106,121){\small{$\theta^{e_2}$}}    
 \put(157,119){\small{$\theta^{e_{r-1}}$}}     
 \put(197,107){\small{$\theta^{e_r}$}}     
 \put(127,135){\small{$\sigma$}} 
 \put(18,85){\small{$b^{l}$}}    
 \put(32,100){\small{$a^{n_0}$}}    
 \put(81,119){\small{$a^{n_1}$}}    
 \put(178.5,117){\small{$a^{n_{r-1}}$}}     
 \put(230,90){\small{$a^{n_r}$}}
  \put(63,22){\small{$\theta^{e_1}$}}    
 \put(106,11){\small{$\theta^{e_2}$}}    
 \put(156,13){\small{$\theta^{e_{r-1}}$}}     
 \put(199,27){\small{$\theta^{e_r}$}}         
\end{overpic}
 \caption{Illustrating our proof of Proposition~\ref{p: length of H in bpqp}}
  \label{fig:w_sigma}
\end{figure}

\begin{proof}
Choose a geodesic word $w$ that equals $\sigma$ in $\bpqp$ and consider a van Kampen diagram (per Figure~\ref{fig:w_sigma}) showing $w= \sigma$ in $\bpqp$.
Recall that $\bpqp$ is an HNN extension of $\bpq$ with stable letter $\th$.   
A $\th$-corridor emanates from each $\th$-edge on the boundary arc labelled $\sigma$ and
these corridors all end on the boundary arc labelled $w$. (A corridor  
cannot have both its ends   on the arc labelled $\sigma$
because then there would be a disc-subdiagram enclosed by an innermost such corridor and its presence would imply that 
a subword of  $\sigma$ defines an element of $\<b\>$, since the sides of $\th$-corridors are labelled
by powers of $b$, and the defining form of  $\sigma$ precludes this.) The labels on the subarcs between the ends of the $\th$-corridors are subwords
  $w_i$ of $w$ such that 
  $$
  w \equiv w_0\th^{e_1}w_1 \cdots \th^{e_r}w_r.
  $$
 A key point to observe is that, as group elements,  $w_i = a^{n_i} b^{l_i}$ for some $l_i \in \Z$,  because the 
 subdiagrams (which may be degenerate)
 between successive corridors are van Kampen diagrams with boundary labels of the form  $w_ib^*a^{n_i}b^*$, 
 where the powers of $b$ are the labels on the sides of the corridors. 
  From this decomposition we have
 \begin{equation}\label{e11}
 d_{\bpqp}(1,\sigma) = |w| = r + \sum_{i=0}^r |w_i| = r + \sum_{i=0}^r d_{\bpqp}(1, a^{n_i} b^{l_i}).
 \end{equation} 
  Now, $\sum_{i=0}^r l_i = l$ because killing   $\th$ retracts $\bpqp$ onto $\bpq$, and the image of $\sigma$
  is $b^la^{\sum n_i}$ while the (equal!) image of $w$ is $b^{\sim l_i}a^{\sum n_i}$ in $\<a,b\>\cong\Z^2$. Thus,
 \begin{equation} \label{e:bs}  
 d_{\bpq}(1,b^l) \leq \sum_{i=0}^r d_{\bpq}(1,b^{l_i}).
 \end{equation}
 Because the words metrics on    $d_{\bpq}$ and $d_{\bpqp}$ agree on $\langle a, b \rangle$, we can use 
 Proposition \ref{p:distort} to compare  the three terms of   
 \begin{equation}\label{e:help}
  d_{\A}(1, a^{n_i}) +  d_{\A}(1,  b^{l_i})  = d_{\A}(1,   a^{n_i} b^{l_i})
 \end{equation} 
with the corresponding distances in $\bpqp$.
To this end, we first use the concavity of 
$f(x)=x^{1/\alpha}$ to deduce that
\begin{equation}\label{e:concave}
  d_{\A}(1, a^{n_i})^{1/\alpha} +  d_{\A}(1,  b^{l_i})^{1/\alpha} \le  2^{1-\frac{1}{\alpha}} d_{\A}(1,   a^{n_i} b^{l_i})^{1/\alpha},
 \end{equation} 
then we use Proposition \ref{p:distort} to bound the terms on the left above and term on the right below,  concluding that  
 \begin{equation}\label{e:more-help} 
 d_{\bpqp}(1,  a^{n_i})   + d_{\bpqp}(1,  b^{l_i})  \le \k^{2}2^{1-\frac{1}{\alpha}}\,  d_{\bpqp}(1, a^{n_i} b^{l_i}) 
 \end{equation}  
 for $i = 0, \ldots, r$. 
  Summing and then calling on  \eqref{e11} and \eqref{e:bs} gives \eqref{e:deco2} for a suitable constant $C \geq 1$.
\end{proof} 

We will use  Proposition~\ref{p: length of H in bpqp} in the final stages of our proof of  Theorem~\ref{t:main} in Section~\ref{s:Proof of Main Theorem}.  More particularly, we will need:

\begin{corollary} \label{c: length in bpqp} There exists a constant $C>0$ such that
if $\sigma = b^l \omega$ in $\bpqp$, where $l \in \Z$  and  $\omega \in F(a,\th)$, then 
\begin{align}\label{e:big0}
 d_{\bpqp}(1,b^l) \leq C  d_{\bpqp}(1,\sigma).
\end{align}
And if, further, $\omega=\omega_0^m$ for some $m \geq 1$ and some reduced $\omega_0 \in F(a,\th)$, then 
\begin{align}\label{e:big2}
 d_{\bpqp}(1,\omega_0)  \leq C  d_{\bpqp}(1,\sigma). 
\end{align}
\end{corollary}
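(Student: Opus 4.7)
My plan is to express $\sigma$ in the normal form \eqref{e:deco}, apply Proposition~\ref{p: length of H in bpqp}, and compare the $\th$-count and $a$-exponents of $\omega_0$'s normal form with those of $\omega = \omega_0^m$.

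For \eqref{e:big0}, the argument is immediate: killing $\th$ retracts $\bpqp$ onto $\bpq$, so $d_{\bpqp}(1, b^l) = d_{\bpq}(1, b^l)$, and Proposition~\ref{p: length of H in bpqp} directly gives $d_{\bpq}(1, b^l) \le C\,d_{\bpqp}(1, \sigma)$.

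For \eqref{e:big2}, I first pass to the cyclic reduction $\omega_0 = \alpha\omega_0^{\#}\alpha^{-1}$ in $F(a,\th)$, so that $\omega_0^{\#}$ is cyclically reduced and $\omega = \alpha(\omega_0^{\#})^m\alpha^{-1}$ is a reduced word whose $\th$-count equals $2k+ms$, where $k$ and $s$ are the numbers of $\th^{\pm 1}$-letters in $\alpha$ and $\omega_0^{\#}$; the $\th$-count of $\omega_0$ is $2k+s$. Writing $r_0, (n'_i)$ and $r, (n_i)$ for the $\th$-counts and $a$-exponents in the normal forms of $\omega_0$ and $\omega$, the triangle inequality gives $d_{\bpqp}(1, \omega_0) \le r_0 + \sum_{i=0}^{r_0} d_{\bpqp}(1, a^{n'_i}) = r_0 + \sum_{i=0}^{r_0} d_{\bpq}(1, a^{n'_i})$, so it suffices to bound this quantity by a constant times $r + \sum d_{\bpq}(1, a^{n_i})$, which Proposition~\ref{p: length of H in bpqp} already controls by $C\,d_{\bpqp}(1, \sigma)$. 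The $\th$-count bound $r_0 \le r$ is clear from $r_0 = 2k+s \le 2k+ms = r$. For the $a$-exponents, if $\omega_0^{\#}$ contains at least one $\th$-letter (i.e.\ $s\ge 1$), a direct comparison of normal forms shows that every $a$-exponent of $\omega_0$ appears among the $a$-exponents of $\omega$ with at least the same multiplicity: the outer exponents from $\alpha$ and $\alpha^{-1}$ and the two junction exponents between $\alpha$ and $\omega_0^{\#}$ are common to both, while the interior $a$-exponents of $\omega_0^{\#}$ each appear $m$ times in $(\omega_0^{\#})^m$. In the remaining case $\omega_0^{\#} = a^{n_0^*}$ (so $s=0$), the single middle $a$-exponent changes from $n_0^*$ in $\omega_0$ to $mn_0^*$ in $\omega$, and Lemma~\ref{l:distort} yields $d_{\bpq}(1, a^{n_0^*}) \le (K_0/k_0)\,d_{\bpq}(1, a^{mn_0^*})$, with the multiplicative discrepancy absorbed into the final constant.

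The only subtle point is the case analysis of the normal form of $(\omega_0^{\#})^m$ at the junctions between consecutive copies of $\omega_0^{\#}$---whether the boundary $a$-exponents sum to zero and how the adjacent $\th$-letters behave---but this is routine bookkeeping secured by the cyclic reducedness of $\omega_0^{\#}$, not a genuine obstacle.
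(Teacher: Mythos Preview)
Your proposal is correct and follows essentially the same route as the paper: both arguments write $\omega_0 = u\pi u^{-1}$ with $\pi$ cyclically reduced, observe that the $\th$-syllables and $a$-exponents appearing in the normal form of $\omega_0$ are a subset of those appearing in the normal form of $\sigma = b^l u\pi^m u^{-1}$, and then invoke Proposition~\ref{p: length of H in bpqp} together with the triangle inequality. Your treatment of the edge case $s=0$ (where $\pi$ is a pure power of $a$, so the middle exponent changes from $n_0^*$ to $mn_0^*$ and one must appeal to Lemma~\ref{l:distort}) is in fact more explicit than the paper's, which tacitly assumes $\pi$ contains at least one $\th$-letter when describing the deletion of syllables.
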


\begin{proof}
The inequality \eqref{e:big0} follows immediately from \eqref{e:deco2}.   
Towards \eqref{e:big2}, we assume that $\omega_0$ and $\omega$
are written as freely reduced words, which implies in particular
that $\omega$ is the suffix of $\sigma$ following $b^l$ in the decomposition \eqref{e:deco}.
Let $u$ be the maximal prefix of $\omega_0$ such that $\omega_0 \equiv u \pi  u^{-1}$,  with $\pi$ cyclically
reduced.  Then $\omega \equiv u \pi^m u^{-1}$.
Now $\pi \equiv   
   a^{n'_0}\th^{f_1}a^{n'_1} \cdots \th^{f_{r'}} a^{n'_{r'}}$
for some $r'$, some $f_i \in \{-1, 1\}$ and some $n'_i \in\Z$ with $n_i\neq 0$ if $f_i=-f_{i+1}$.
And because $u \pi^m u^{-1}$ is   reduced, if we delete the subword that starts with the first of the $m$ instances 
of $\th^{f_1}$ and ends immediately before the final instance of  $\th^{f_1}$,  then we get $\omega_0$.  Thus, by deleting  $b^l$ and  whole syllables $\th^{e_j}$ and $a^{n_j}$ from the right hand side of  \eqref{e:deco},  we get the reduced word $\omega_0$.  
Therefore,  by the triangle inequality,
\begin{equation}\label{e:roo}
d_{\bpqp}(1,  \omega_0) \le r'' +  \sum_{i\in S} d_{\bpqp}(1, a^{n_i})  
\end{equation} 
for some $r''< r$ and subset $S\subset\{0,\dots,r\}$.  Proposition~\ref{p: length of H in bpqp}
tells us that this last quantity is at most $C  d_{\bpqp}(1,\sigma)$.
\end{proof}

\section{Proof of Theorem \ref{t:main}} \label{s:Proof of Main Theorem}

Suppose words $u,v\in F(a, b, s, t, \th,z)$ of total length $|u| + |v| = n$ represent conjugate elements of $\tbpqp$.  Instead of  conjugating $u$ to $v$ directly in $\tbpqp$,  we delete the letters $z^{\pm 1}$ they contain  and consider their images  $\bar{u}$ and $\bar{v}$ 
in $\bpqp$. We fix a reduced
word $u_0 \in F(a, b, s,  t, \th)$ representing an element of the conjugacy class $[\bar{u}] = [\bar{v}]$ in $\bpqp$  that satisfies    
 \begin{equation} 
|u_0| \le \ \min \{ d_{\bpqp} (1, u), \ d_{\bpqp} (1, v) \}.
\end{equation}
If $[\bar{u}]$ intersects $C_{\bpqp}(b)$, then we call on  Proposition~\ref{p:preferred new}, which allows us to further assume that $u_0$  represents an element of $C_{\bpqp}(b)$.

Theorem~\ref{t:cl} tells is that   $\CL_{\bpqp}(n) \simeq n$. Accordingly, there is a constant $C_1$ and
words $x_u, x_v \in  F(a, b, s, t,  \th)$ such that  $x_u^{-1} \bar{u} x_u =  x_v^{-1} \bar{u} x_v = u_0$ in $\bpqp$ and
 \begin{equation} \label{e: xuxv bound}
\max \{ |x_u|, |x_v|\} \leq C_1 n.
 \end{equation}

Then $x_u^{-1}ux_u = u_0z^{N_u}$ and $x_v^{-1} v x_v = u_0z^{N_v}$ in $\tbpqp$ for some integers $N_u, N_v$.
Now, $x_u^{-1} u x_u u_0^{-1}$ and $x_v^{-1} v x_v u_0^{-1}$ are words of 
length at most $(2C_1 + 2)n$, so Proposition~\ref{p:new} tells us that 
\begin{equation} \label{e:NuNv bound}
\max \{ |N_u|,  |N_v|\} \le C_2 n^{\alpha   +1},
\end{equation} 
where $C_2 = \k^\alpha (2C_1 + 2)^{\alpha   + 1 }$.

Let $N= N_v - N_u$. Then  
\begin{equation} \label{e:N bound}
|N|   \le 2C_2 n^{\alpha + 1}.
\end{equation}
In $\tbpqp$ we have $u_0z^{N_u} \sim u_0z^{N_v}$,   so  $u_0  \sim  u_0z^N$ since $z$ is central.
Therefore $N$ is in the image of $\zeta_{u_0} : C_{\bpqp}(u_0) \to \Z$ (the zeta map defined in \ref{d:zeta}).
  And if $y^{-1} u_0 y =  u_0z^N$ in $\tbpqp$, then  $x_u y x_v^{-1}$ conjugates  $u$ to $v$ in $\tbpqp$.
 So,  from  \eqref{e: xuxv bound} we have that
\begin{equation*} \label{e:CL reduction}
\CL_{\tbpqp}(u,v) \le  \CL_{\tbpqp} (u_0, u_0z^N) + 2 C_1 n.
\end{equation*} 
Thus we will be done if we can establish  an upper bound on  $\CL_{\tbpqp} (u_0, u_0z^N)$ for $N\in \im \zeta_{u_0}$  satisfying  \eqref{e:N bound}.

If $[\bar{u}] = [u_0]$ does not intersect $C_{\bpqp}(b)$ in $\bpqp$, then Proposition~\ref{p:zetas}\eqref{p:zetas2} renders this task straightforward: $\zeta_{u_0}$ is the zero map, and so $N=0$ and $\CL_{\tbpqp} (u_0, u_0z^N) =0$.  The case where $[u_0] = \{ 1 \}$ is also elementary and has $N=0$.

If, on the other hand, $[u_0]$ intersects $C_{\bpqp}(b) \ssm \{1 \}$  in $\bpqp$, then the following proposition applies with  $\gamma = u_0$.  (This is where we use the assumption, allowed by Proposition \ref{p:preferred new}, 
 that $u_0 \in C_{\bpqp}(b)$.)  Any lift $\tilde{g}$ of the element $g$  provided by Proposition \ref{p:translate}
  will conjugate $u_0$ to $u_0 z^N$ in $\tbpqp$.  If we choose a geodesic lift, then by combining 
  \eqref{e:K} and   \eqref{e:N bound}  we get
  $d_{\tbpqp}(1,g) = d_{\bpqp}(1,g) \leq E\, n^{\alpha +1}$ for a suitable constant $E$,
  where we have used the fact that  $\alpha >1$ to absorb the quadratic term from  \eqref{e:K}.

The resulting upper bound $\CL_{\tbpqp} (n)  \preceq n^{\alpha +1}$ matches our lower bound 
from Lemma~\ref{l: lower bound on CL} and therefore completes the proof of Theorem~\ref{t:main}.

\begin{proposition}\label{p:translate}
There exists $K \geq 1$ such that for all $\g \in C_{\bpqp}(b)$ and all $N \in \im \zeta_{\g}$,  there exists    $g \in C_{\bpqp}(\g)$ for which $N = \zeta_{\g}(g)$ and 
\begin{equation}\label{e:K}
d_{\bpqp}(1,g)\le K \, \left(|N| + d_{\bpqp}(1,\gamma)^2\right).
\end{equation}
\end{proposition}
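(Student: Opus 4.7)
The plan is to do a case analysis on $\gamma \in C_{\bpqp}(b)$ guided by the classification of $C_{\bpqp}(\gamma)$ and $\im \zeta_\gamma$ provided by Proposition~\ref{p:zetas}. In every case I will apply Lemma~\ref{l:bezout} to write a candidate $g$ as a short product of chosen generators of $C_{\bpqp}(\gamma)$, and then estimate $d_{\bpqp}(1,g)$ using Corollary~\ref{c: length in bpqp} together with the obvious retracts of $\bpqp$ onto the infinite cyclic groups $\<b\>$ and $\<\th\>$ (obtained by killing the other generators).

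First I would dispose of the trivial cases. If $\im \zeta_\gamma = 0$ (in particular if $\gamma=1$), then $N=0$ and $g=1$ works. If $\gamma=b^l$ with $l \neq 0$, then Proposition~\ref{p:zetas}(ii) gives $\im \zeta_\gamma = l\Z$ and I take $g = \th^{N/l}$, for which $d_{\bpqp}(1,g) \le |N|$.

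The main case is $\gamma = b^l \omega$ with $\omega \in F\ssm\{1\}$, where $F = \<a,\th\>$. Write $\omega = \omega_0^m$ with $\omega_0$ a maximal root of $\omega$ in $F$ and $m \ge 1$, and let $j,j_0$ be the exponent sums of $\th$ in $\omega, \omega_0$, so $j = m j_0$. Proposition~\ref{p:zetas}(iii) gives $C_{\bpqp}(\gamma) = \<b\> \times \<\omega_0\>$ with $\zeta_\gamma(b)= -j$ and $\zeta_\gamma(\omega_0) = l j_0$. If $j = 0$ then $j_0 = 0$, whence $\im \zeta_\gamma = 0$ and $N=0$. So assume $j \neq 0$.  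Lemma~\ref{l:bezout} applied with $x_0 = b$ and $x_1 = \omega_0$ (so $m_0 = -j \neq 0$ and $m_1 = l j_0$) then produces integers $\lambda, \mu$ with $g := b^\lambda \omega_0^\mu$ satisfying $\zeta_\gamma(g) = N$, $|\lambda| < |N/j| + |l j_0|$, and $|\mu| < |j|$.

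To bound $d_{\bpqp}(1,g)$ I would invoke three ingredients. The retracts of $\bpqp$ onto $\<b\>$ and onto $\<\th\>$ give $|l| \le d_{\bpqp}(1,\gamma)$ and $|j| \le d_{\bpqp}(1,\gamma)$, whence $|j_0| \le |j| \le d_{\bpqp}(1,\gamma)$. Corollary~\ref{c: length in bpqp} gives $d_{\bpqp}(1, \omega_0) \le C\, d_{\bpqp}(1,\gamma)$. The triangle inequality together with $d_{\bpqp}(1,b) \le 1$ then yields
$$
d_{\bpqp}(1,g) \ \le\ |\lambda| + |\mu|\cdot d_{\bpqp}(1, \omega_0) \ \le\ \bigl(|N| + d_{\bpqp}(1,\gamma)^2\bigr) + C\, d_{\bpqp}(1,\gamma)^2,
$$
which delivers the desired estimate for a suitable $K$. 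I do not anticipate a serious obstacle: the heavy lifting was done in Sections~\ref{s: special conjugacies}--\ref{s:word metric in bpqp}, and the quadratic term $d_{\bpqp}(1,\gamma)^2$ is the unavoidable cost of the product $|\mu| \cdot d_{\bpqp}(1, \omega_0)$, each factor of which is linear in $d_{\bpqp}(1,\gamma)$.
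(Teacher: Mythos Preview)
Your argument has a genuine gap: there is \emph{no} retraction of $\bpqp$ onto $\<b\>$. The defining relations $s^{-1}a^qs=a^pb$ and $t^{-1}a^qt=a^pb^{-1}$ force $b^2=1$ in the abelianization of $\bpqp$, so every homomorphism $\bpqp\to\Z$ kills $b$. In fact $\<b\>$ is heavily distorted in $\bpqp$: Proposition~\ref{p:distort} (together with the retraction $\bpqp\onto\bpq$ killing $\th$) gives $|l|\le \k^\alpha d_{\bpqp}(1,b^l)^\alpha$, and then Corollary~\ref{c: length in bpqp} yields only $|l|\lesssim d_{\bpqp}(1,\gamma)^\alpha$, not $|l|\le d_{\bpqp}(1,\gamma)$ as you claim. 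With this correct bound your estimate becomes $|\lambda|\lesssim |N|+d_{\bpqp}(1,\gamma)^{\alpha+1}$, and since you then use only the crude inequality $d_{\bpqp}(1,b^\lambda)\le|\lambda|$, you end up with $d_{\bpqp}(1,g)\lesssim |N|+d_{\bpqp}(1,\gamma)^{\alpha+1}$, which is strictly weaker than \eqref{e:K} because $\alpha+1>2$.

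The repair is to exploit the distortion of $\<b\>$ in the \emph{other} direction as well: Proposition~\ref{p:distort} also gives $d_{\bpqp}(1,b^\lambda)\le \k|\lambda|^{1/\alpha}$, so
\[
d_{\bpqp}(1,b^\lambda)\ \lesssim\ \bigl(|N|+d_{\bpqp}(1,\gamma)^{\alpha+1}\bigr)^{1/\alpha}\ \lesssim\ |N|^{1/\alpha}+d_{\bpqp}(1,\gamma)^{(\alpha+1)/\alpha}\ \le\ |N|+d_{\bpqp}(1,\gamma)^2,
\]
using $\alpha>1$ so that $(\alpha+1)/\alpha<2$. This is exactly what the paper does. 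Your treatment of the $\omega_0^\mu$ term, and of the cases $\gamma\in\<b\>$ and $j=0$, is fine and matches the paper.
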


\begin{proof}
By Proposition~\ref{p:zetas}, $C_{\bpqp}(b) = \<b\> \times F(a, \th)$. 

Suppose first that $\gamma = b^l$ for some $l \neq 0$. Proposition~\ref{p:zetas}\eqref{p:zetas3} tells us that  
in this case   
$\im \zeta_{\gamma}$ is generated by $l = \zeta_\g(\th )$. So $N\in l\Z$ and  for  $g=\th^{N/l}$ 
we have  $d_{\bpqp}(1,g) = |N/l| \leq |N|$. 

Suppose now that $\g \in C_{\bpqp}(b)  \ssm \langle b \rangle$.  So $\g = b^l \omega$ in $\bpqp$, where $l \in \Z$ and $\omega=\omega_0^m$ for some $\omega_0\in F(a,\th) \ssm \{1 \}$ that is not a proper power.  Let $j_0$ and $j$ be the exponent sums of $\th$ in $\omega_0$ and $\omega$, respectively.  So $j=mj_0$.  

Proposition~\ref{p:zetas}\eqref{p:zetas4} tells us that in this case the image of  $\zeta_{\g}$ is 
generated by 
$\zeta_{\g}(b) = j$ and $\zeta_{\g}(\omega_0) = j_0 l$.  So $N  = \lambda j + \mu j_0 l$ for some  $\lambda, \mu \in \Z$.  
By applying Lemma~\ref{l:bezout} with $b$ in the role of $x_0$ and $\omega_0$ in the role of $x_1$ and $\gamma$ in the role of $g$, we may assume that $\lambda, \mu \in \Z$ satisfy   
\begin{align}
 |\lambda| & < |N/j| + | j_0 l|, \text{ and} \label{e:lam} \\
|\mu| & \le |j|. \label{e:mu}
\end{align}
Define $g := b^\lambda \omega_0^\mu$ and note that $\zeta_{\g}(g) =N$.

We require two more estimates.  First,  Corollary~\ref{c: length in bpqp}  
gives us a constant $C>0$ such that
\begin{align}\label{e:big1}
\max \{ d_{\bpqp}(1,b^l), d_{\bpqp}(1,\omega_0) \} \leq C  d_{\bpqp}(1,\g). 
\end{align} 
Secondly,  because killing the other generators retracts $\tbpqp$ onto $\<\th\>$ and the
image of $\gamma$ under this retraction is $\th^j$,  we have
\begin{equation} \label{e:j} |j|  \le   d_{\bpqp}(1,\g). \end{equation}  
Now,  combining  \eqref{e:mu}--\eqref{e:j} we get
\begin{equation}\label{e:mu2}
d_{\bpqp}(1,\omega_0^\mu) \le |\mu|\, d_{\bpqp}(1,\omega_0) \le C  \, d_{\bpqp}(1,\gamma)^2.
\end{equation}

Using Proposition~\ref{p:distort} for the inequality and the retraction $\bpqp \onto \bpq$ killing $\th$ for the second equality, we also have 
\begin{equation}\label{e:l}
|l| = d_{\mathbb{T}}(1,b^l) \leq \k^{\alpha} d_{\bpq}(1,b^l)^{\alpha} = \k^{\alpha}  d_{\bpqp}(1,b^l)^{\alpha}. 
\end{equation} 
 Then, using (\ref{e:lam}) for the first inequality, and combing \eqref{e:big1}, \eqref{e:j}, and \eqref{e:l} for the third, we have
\begin{equation} \label{e:lambda}
	|\lambda| < |N/j| + |j_0 l| \le |N| + |j|.|l| \le  |N| +  \k^{\alpha} C^{\alpha}  d_{\bpqp}(1,\g)^{\alpha+1}.
\end{equation}

Using Proposition \ref{p:distort} and then \eqref{e:lambda}, we get  
\begin{equation}\label{e12}
d_{\bpqp}(1,b^\lambda) = d_{\bpq}(1,b^\lambda)  \leq  \k |\lambda|^{1/\alpha} \le  \k \left( |N| +  \k^{\alpha} C^{\alpha}  d_{\bpqp}(1,\g)^{\alpha+1} \right)^{1/\alpha}.
\end{equation} 
Finally, the triangle inequality applied to  $g = b^\lambda \omega_0^\mu$ gives
$$
d_{\bpqp}(1, g) \le d_{\bpqp}(1, b^\lambda) + d_{\bpqp}(1,\omega_0^\mu), $$
and then  \eqref{e:mu2} and \eqref{e12} yield \eqref{e:K}, as required, for suitable $K \geq 1$,  because $\alpha > 1$ 
and hence $(\alpha +1) / \alpha < 2$.  
 \end{proof}

\bibliographystyle{alpha}
\bibliography{bibli}

\bigskip

\noindent Martin R.\ Bridson, 
Mathematical Institute, Andrew Wiles Building, Oxford OX2~6GG, United Kingdom, \texttt{bridson@maths.ox.ac.uk},  
\href{https://people.maths.ox.ac.uk/~bridson/}{\texttt{people.maths.ox.ac.uk/bridson/}}

\bigskip

\noindent  {Timothy R.\ Riley}, Department of Mathematics, 310 Malott Hall,  Cornell University, Ithaca, NY 14853, USA,  \texttt{tim.riley@math.cornell.edu}, \href{https://pi.math.cornell.edu/~riley/index.html}{\texttt{math.cornell.edu/$\sim$riley/}}

\end{document}